\newtheorem{theo}{Theorem}[section]
\newtheorem*{theo1}{Theorem}
\newtheorem{lemma}[theo]{Lemma}
\newtheorem{assume}[theo]{Assumption}
\newtheorem{propo}[theo]{Proposition}
\newtheorem{coro}[theo]{Corollary}
\newtheorem*{theo:main}{Theorem~\ref{inj1}}
\theoremstyle{definition}
\newtheorem{defi}[theo]{Definition}
\newtheorem{nota}[theo]{Notation}
\newtheorem{rem}[theo]{Remark}
\newtheorem{exam}[theo]{Example}
\newtheorem{exams}[theo]{Examples}
\newcommand\Mod{\operatorname{\bf Mod}}
\newcommand\op{\operatorname{op}}
\newcommand\id{\operatorname{id}}
\newcommand\Set{\operatorname{\bf Set}}
\newcommand\Met{\operatorname{\bf Met}}
\newcommand\Cat{\operatorname{\bf Cat}}
\newcommand\Str{\operatorname{\bf Str}}
\newcommand\CMet{\operatorname{\bf CMet}}
\newcommand\Ab{\operatorname{\bf Ab}}
\newcommand\Ban{\operatorname{\bf Ban}}
\newcommand\Pos{\operatorname{\bf Pos}}
\newcommand\CPO{\operatorname{\bf CPO}}
\newcommand\colim{\operatorname{colim}}
\newcommand\eps{\varepsilon}
\newcommand\ca{\mathcal {A}}
\newcommand\cc{\mathcal {C}}
\newcommand\cg{\mathcal {G}}
\newcommand\ch{\mathcal {H}}
\newcommand\ce{\mathcal {E}}
\newcommand\ck{\mathcal {K}}
\newcommand\cm{\mathcal {M}}
\newcommand\cp{\mathcal {P}}
\newcommand\cv{\mathcal {V}}
\newcommand{\RR}{{\mathbb R}}
\newcommand{\LL}{{\mathbb L}}
\newcommand{\TT}{{\mathbb T}}
\newcommand{\FF}{{\mathbb F}}
\newcommand{\tx}{\textnormal}
\newcommand{\bo}{\mathbf}
\date{January 3, 2025}
\begin{document}
\title[Enriched concepts of regular logic]
{Enriched concepts of regular logic}
\author[J. Rosick\'{y} and G. Tendas]
{J. Rosick\'{y} and G. Tendas}
\address{
\newline J. Rosick\'{y}\newline
Department of Mathematics and Statistics\newline
Masaryk University, Faculty of Sciences\newline
Kotl\'{a}\v{r}sk\'{a} 2, 611 37 Brno, Czech Republic\newline
\textnormal{rosicky@math.muni.cz}\newline
\newline G. Tendas\newline
Department of Mathematics\newline
University of Manchester, Faculty of Science and Engineering\newline
Alan Turing Building, M13 9PL Manchester, UK\newline
\textnormal{giacomo.tendas@manchester.ac.uk}\vspace{5pt}\newline
\textit{Secondary address:}\newline
Department of Mathematics and Statistics\newline
Masaryk University, Faculty of Sciences\newline
Kotl\'{a}\v{r}sk\'{a} 2, 611 37 Brno, Czech Republic
}
 
\begin{abstract}
	Building on our previous work on enriched universal algebra, we define a notion of enriched language consisting of function and relation symbols whose arities are objects of the base of enrichment $\cv$. In this context, we construct atomic formulas and define the {\em regular fragment} of our enriched logic by taking conjunctions and existential quantification of those. We then characterize $\cv$-categories of models of regular theories as enriched injectivity classes in the $\cv$-category of structures. These notions rely on the choice of an orthogonal factorization system $(\ce,\cm)$ on $\cv$ which will be used, in particular, to interpret relation symbols and existential quantification.
\end{abstract} 
\keywords{}
\subjclass{}

\maketitle

\setcounter{tocdepth}{1}
\tableofcontents
\section{Introduction}
After considering a notion of enriched equational logic in~\cite{RTe}, we now proceed to introduce the regular fragment of enriched logic. More precisely, we go beyond~\cite{RTe} by allowing our enriched languages to involve function and relation symbols whose arities are objects of the base of enrichment. Then, we introduce corresponding enriched notions of atomic formulas, as well as conjunctions and existential quantifications of such.
 
To achieve this, we equip the base $\cv$ of enrichment with an enriched factorization system $(\ce,\cm)$.
We use the class $\cm$ to interpret the relation symbols $R$, of a given language $\LL$, as $\cm$-subobjects 
\begin{center}
	\begin{tikzpicture}[baseline=(current  bounding  box.south), scale=2]

		\node (d0) at (0,0) {$R_A$};
		\node (c1) at (1,0) {$ A^X,$};
		
		\path[font=\scriptsize]
		
		(d0) edge [>->] node [above] {$\cm$} (c1);
	\end{tikzpicture}	
\end{center}
where $X$ is the arity of $R$, and $A$ is an object of $\cv$ that we are endowing with an $\LL$-structure.  

Building on~\cite{RTe}, we define the $\cv$-category $\Str(\LL)$ of $\LL$-structures, prove that it is locally $\lambda$-presentable as a $\cv$-category (when $\LL$ is $\lambda$-ary), and characterize its $\lambda$-presentable objects. 

Given a language $\LL$, we form atomic formulas out of terms, equations, and relations symbols. Then, for the fragment of logic we consider in this paper, we allow to take conjunctions and existential quantifications on these. For any $\LL$-structure $A$ and formula $\varphi(x)$ of arity $X$, we define its interpretation as an $\cm$-subobject
\begin{center}
	\begin{tikzpicture}[baseline=(current  bounding  box.south), scale=2]

		\node (d0) at (0,0) {$\varphi_A$};
		\node (c1) at (1,0) {$ A^X;$};
		
		\path[font=\scriptsize]
		
		(d0) edge [>->] node [above] {$\cm$} (c1);
	\end{tikzpicture}	
\end{center}
this is done recursively on the construction of $\varphi$. 

The chosen factorization system is particularly important for the interpretation of existential quantification: given a formula $\psi(x,y)$ of arity $X+Y$, we define the interpretation of $\varphi(x):=(\exists y)\psi(x,y)$ as the $(\ce,\cm)$-factorization below.
\begin{center}
	\begin{tikzpicture}[baseline=(current  bounding  box.south), scale=2]
		
		\node (a0) at (-0.2,0.8) {$\psi_A$};
		\node (c0) at (1,0.8) {$ A^{X}\times A^Y $};
		\node (c1) at (2.3,0.8) {$ A^X$};
		\node (d0) at (1,1.4) {$\varphi_A$};
		
		\path[font=\scriptsize]
		
		(a0) edge [>->] node [below] {} (c0)
		(c0) edge [->] node [below] {$p_1$} (c1)
		(a0) edge [->>] node [above] {$\ce\ \ \ $} (d0)
		(d0) edge [>->] node [above] {$\ \ \ \cm$} (c1);
	\end{tikzpicture}	
\end{center}
This generalizes the interpretation of existential quantification within a topos \cite{MLM}. In fact, in the case where $\cv$ is a Grothendieck topos with its cartesian closed structure and equipped with the (epi, mono) factorization system, our enriched logic extends the internal logic of the topos to the case where arities are non-discrete (meaning that they are not coproducts of the terminal object in general).

Then we define sequents
$$
(\forall x)(  \varphi(x) \vdash \psi(x)),
$$
of formulas of the same arity $X$, and say that an $\LL$-structure $A$ satisfies them if $\varphi_A\subseteq\psi_A$ as $\cm$-subobjects of $A^X$. Ordinarily, regular theories are defined as set of sequents as above where $\varphi$ is a conjunction of atomic formulas and $\psi$ is a  positive-primitive formula; that is, obtained by applying existential quantification to conjunctions of atomic formulas. Equivalently, one can take also $\varphi$ to be positive-primitive without changing the power of the fragment; both these notions have occurred in the literature. In the finitary setting, models of such regular theories are known to characterize finite injectivity classes in locally finitely presentable categories~\cite[Chapter~4]{AR}, as well as those full subcategories that are closed under products, filtered colimits, and pure subobjects~\cite{RAB02}.

In the enriched context, injectivity classes were first introduced in~\cite{LR12} and studied for instance in \cite{AR1,BLV,LT24}. The notion relies on a class of maps $\ce$ generalizing that of surjections; in our case this corresponds to the left class of the chosen factorization system on $\cv$. Then, more recently, we introduced an enriched notion of purity~\cite{RTe1} depending on a factorization system on $\cv$, as in the current setting. The aim was to characterize enriched $\ce$-injectivity classes as those closed under a certain class of limits, filtered colimits, and $\ce$-pure subobjects \cite[Theorem~5.5]{RTe1}.

In this paper we shall see that, under some hypothesis on $(\ce,\cm)$ and on the given language $\LL$, the $\ce$-pure morphism from \cite{RTe1} in the $\cv$-category $\Str(\LL)$ can be characterized in a model theoretic way as those homomorphisms of structures $f\colon K\to L$ with are elementary with respect to every positive-primitive formula $\varphi$ (Section~\ref{purity-section}). 

After this, we complete the connection with the enriched notion of injectivity, by defining our {\em enriched regular theories} to be sets of sequents of the form
$$(\forall x)(  \varphi(x) \vdash (\exists y)(\psi(x,y)\wedge \varphi(x)))$$
where $\varphi$ and $\psi$ are conjunctions of atomic formulas (Definition~\ref{regular}). This slightly unusual choice is dictated from the fact that, unlike in the ordinary case or the case internal to a topos,  our class $\ce$ may not be stable under pullbacks; making certain deduction rules of ordinary regular logic fail. In particular, the sequent above is not equivalent to the same sequent where $\varphi$ is omitted from the right-hand-side (Remark~\ref{existential-wedge}).

Then we can prove:

\begin{theo:main}
	Under Assumption~\ref{assumption}, the following are equivalent for a full subcategory $\ca$ of $\Str(\LL)$:\begin{enumerate}
		\item $\ca$ is a $(\lambda,\ce)$-injectivity class in $\Str(\LL)$;
		\item $\ca\cong\Mod(\TT)$ for a regular $\mathbb L_{\lambda\lambda}$-theory $\TT$.
	\end{enumerate}
\end{theo:main}

The key tool for capturing $\ce$-injectivity in the theorem above, is proving the existence of {\em presentation formulas} for given $\LL$-structures (see \cite[Remark~5.5]{AR} for the ordinary version): we say that a conjunction of atomic formulas $\varphi$ is a presentation formula for $A\in\Str(\LL)$ if homming out of $A$ is isomorphic to the evaluation $\cv$-functor
$$\varphi_{(-)}\colon\Str(\LL)\longrightarrow\cv.$$
The fact that these exist for any $A\in\Str(\LL)$ under Assumption~\ref{assumption}, is discussed in Section~\ref{sect:pres}.

When we are in the context of~\cite{RTe1}, so that $\ce$-injectivity classes can be characterized under certain closure properties, then the theorem above implies the following.

\begin{theo1}[part of Theorem~\ref{inj2}]
	The following are equivalent for a full subcategory $\ca$ of $\Str(\LL)$:\begin{enumerate}
		\item $\ca\cong\Mod(\TT)$ for a regular $\mathbb L_{\lambda\lambda}$-theory $\TT$;
		\item $\ca\cong\Mod(\TT)$ for a theory $\TT$ with sequents of the form
		$$(\forall x)(  \varphi(x) \vdash \psi(x))$$
		where $\varphi$ and $\psi$ are positive-primitive $\mathbb L_{\lambda\lambda}$-formulas;
		\item $\ca$ is closed under products, powers by $\ce$-stable objects, $\lambda$-filtered colimits, and $\lambda$-elementary subobjects.
	\end{enumerate}
\end{theo1}

The $\lambda$-elementary morphisms are defined in Section~\ref{purity-section}, and are a natural generalization of the ordinary pure ones to the enriched context. It follows from the theorem that, in this case, one can still consider the more traditional kind of regular theories (with positive-primitive formulas on both sides of the sequents) and at the same time characterize models of such theories in terms of closure under specific limits, filtered colimits, and elementary embeddings.

Beside Grothendieck toposes, our examples of bases of enrichment include the symmetric monoidal closed categories $\Met$ of generalized metric spaces with non-expanding maps, $\Ban$ of Banach spaces and linear maps of norm $\leq 1$ with the factorization systems (dense, closed isometry) and (dense, isometry) respectively. In the second case, our enriched logic is related to the logic of positive bounded formulas and approximate semantics~\cite{I} (see Example~\ref{Iovino}).

Other cases of particular interest that are covered by our theory, are those bases of enrichment for which $\cv_0$ is a quasivariety~\cite{AR} equipped with the (regular epi, mono) factorization system. Examples include the category $\Ab$ of abelian groups, $\mathbb R$-$\Mod$ of modules over a ring $R$, and $\bo{DGAb}$ of differentially graded abelian groups.

We shall also discuss the case where $\cv=\Cat$ with the (surjective on objects, injective on objects and fully faithful) factorization system. In this setting, we can describe sketches, categories with limits of some type, as well as categories equipped with an abelian group bundle and small tangent categories (see \cite{R4,CC}).

\section{Preliminaries}

As in the setting of enriched universal algebra~\cite{RTe}, our base of enrichment is a symmetric monoidal closed category $\cv=(\cv_0,\otimes,I)$ which is locally $\lambda$-presentable as a closed category \cite{K}, for some $\lambda$. This means that $\cv_0$ is locally $\lambda$-presentable and the full subcategory $\cv_\lambda$ spanned by the $\lambda$-presentable objects contains the unit and is closed under the monoidal structure of $\cv_0$. As in \cite{RTe}, for notational simplicity we denote by 
$$A^X:=[X,A]$$ 
the internal hom in $\cv$. For any set $X$, coproducts $X\cdot I:=\sum_{x\in X}I$ of copies of $I$ are called \textit{discrete objects} of $\cv$. Note that, for every object $V$ of $\cv$ there is the induced morphism $\delta_V:V_0\to V$ where $V_0=\cv_0(I,V)\cdot I$. 

We will make use of the enriched notions of weighted limits and colimits, for which we direct the reader to~\cite{K}. In particular we will mostly consider conical limits and colimits, as well as powers and copowers (which together generate all weighted limits and colimits).

Finally, we assume our base of enrichment $\cv$ to come equipped with an {\em enriched (orthogonal) factorization system} $(\ce,\cm)$ in the sense of \cite{LW}; this means that $(\ce,\cm)$ is an ordinary factorization system on $\cv_0$ for which the class $\ce$ is closed in the arrow $\cv$-category $\cv^\to$ under copowers (or equivalently, $\cm$ is closed under powers). We shall need the following result about such factorization systems:

\begin{lemma}\label{M-lp}
	If $\cm$ is closed in $\cv^\to$ under $\lambda$-filtered colimits, then $\cm$ is locally $\lambda$-presentable as a $\cv$-category and the inclusion $J\colon\cm\hookrightarrow\cv^\to$ has a left adjoint $L\colon\cv^\to\to\cm$ given pointwise by sending $f\colon X\to Y$ to the $\cm$-subobject $Lf\colon LX\rightarrowtail Y$ induced by the $(\ce,\cm)$-factorization 
	$$ X\stackrel{\ce}{\twoheadrightarrow} LX\stackrel{\cm}{\rightarrowtail} Y $$
	of the map $f$.
\end{lemma}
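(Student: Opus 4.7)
The plan is to construct the adjoint $L$ pointwise from the factorization system, verify its universal property via the enriched orthogonality, and then deduce local presentability of $\cm$ from standard results on reflective $\cv$-subcategories.

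First, I recall that $\cv^\to$ is itself locally $\lambda$-presentable as a $\cv$-category, being the $\cv$-functor category on the walking-arrow $\cv$-category into $\cv$. Now, given $f\colon X\to Y$ in $\cv^\to$, I define $Lf\colon LX\rightarrowtail Y$ to be the $\cm$-part of the $(\ce,\cm)$-factorization $X\stackrel{e_f}{\twoheadrightarrow} LX\stackrel{Lf}{\rightarrowtail} Y$, and take the unit $\eta_f\colon f\to JLf$ to be the commutative square $(e_f,\id_Y)$.

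Next, I would verify the universal property. Given $m\colon A\rightarrowtail B$ in $\cm$ and a morphism $(h,k)\colon f\to Jm$ in $\cv^\to$ (i.e.\ $mh=kf$), the outer square
\[
\begin{array}{ccc} X & \xrightarrow{\;h\;} & A\\ e_f\big\downarrow & & \big\downarrow m\\ LX & \xrightarrow{k\cdot Lf} & B \end{array}
\]
commutes, so the ordinary $(\ce,\cm)$-orthogonality produces a unique diagonal $d\colon LX\to A$ with $de_f=h$ and $md=k\cdot Lf$. The pair $(d,k)\colon Lf\to m$ is then the unique morphism factoring $(h,k)$ through $\eta_f$, giving the underlying bijection on hom-sets. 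To upgrade this to a $\cv$-enriched adjunction, I would invoke the enriched orthogonality built into the definition of $(\ce,\cm)$: since $\cm$ is closed under powers in $\cv^\to$ (equivalently, $\ce$ is closed under copowers), the diagonal fill-in supplies an isomorphism of hom-objects
\[
\cv^\to(Lf,Jm)\xrightarrow{\ \cong\ }\cv^\to(f,Jm)
\]
natural in both arguments, exhibiting $L\dashv J$ as a $\cv$-adjunction.

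Finally, for local presentability I combine the two hypotheses: $J$ is fully faithful, has the $\cv$-left adjoint $L$ just constructed, and by assumption preserves $\lambda$-filtered colimits (since $\cm$ is closed in $\cv^\to$ under them). Because $\cv^\to$ is locally $\lambda$-presentable as a $\cv$-category, the standard result on reflective $\cv$-subcategories whose inclusion preserves $\lambda$-filtered colimits (see \cite{K}) yields that $\cm$ is itself locally $\lambda$-presentable as a $\cv$-category.

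The only non-routine point I anticipate is the passage from the pointwise diagonal fill-in to the enriched universal property; the substance of this is precisely the definitional closure of $\cm$ under powers in $\cv^\to$, which ensures that the bijection of underlying hom-sets refines to an iso of internal homs rather than having to be established by hand.
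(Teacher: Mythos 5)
Your proof is correct, but it takes a genuinely different route from the paper's. The paper does not construct the reflection by hand: it observes that $\cm$ is closed in the locally $\lambda$-presentable $\cv$-category $\cv^\to$ under all conical limits and powers (hence under all weighted limits) as well as $\lambda$-filtered colimits, invokes the reflection theorem \cite[Theorem~2.48]{AR} to obtain an ordinary left adjoint to $J_0$ together with local $\lambda$-presentability in one stroke, upgrades the adjunction to an enriched one because $J$ preserves powers, and dismisses the explicit description of $L$ with ``it is easy to see''. You instead build $L$ pointwise from the $(\ce,\cm)$-factorization and verify the universal property directly from orthogonality --- precisely the part the paper leaves implicit --- and your diagonal fill-in argument (unit $(e_f,\id_Y)$, unique $d$ with $de_f=h$ and $md=k\cdot Lf$) is correct, including the uniqueness of the factoring pair $(d,k)$. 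Your enrichment step is also sound, and arguably cleaner than the paper's appeal to the power-preservation criterion for enriching an ordinary adjoint: by \cite{LW} the defining closure of $\cm$ under powers is equivalent to $\cv$-orthogonality, and since the hom-object $\cv^\to(f,Jm)$ is the pullback of $\cv(X,A)\to\cv(X,B)\leftarrow\cv(Y,B)$, a short pullback computation using $\cv$-orthogonality of $e_f$ against $m$ indeed yields the isomorphism $\cv^\to(Lf,Jm)\cong\cv^\to(f,Jm)$ of hom-objects. What your route buys is an explicit reflector with a self-contained universal-property check; what it costs is that local presentability must then be quoted separately, as the standard fact that a reflective full sub-$\cv$-category of a locally $\lambda$-presentable $\cv$-category whose inclusion preserves $\lambda$-filtered colimits is locally $\lambda$-presentable --- a fact the paper's single citation of \cite[Theorem~2.48]{AR} packages together with reflectivity. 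One small correction on sourcing: that enriched presentability result is not in \cite{K}, which does not treat local presentability; cite \cite{Kel82} or \cite{Bird} instead.
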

\begin{proof}
	The $\cv$-category of arrows $\cv^\to$ is locally $\lambda$-presentable because $\cv$ is. Moreover $\cm$ is closed in $\cv^\to$ under all conical limits and powers (since $(\ce,\cm)$ is enriched); thus is closed under all weighted limits. By \cite[Theorem~2.48]{AR} $J_0$ has a left adjoint, and, since $J$ preserves powers, the ordinary left adjoint is actually an enriched one. Thus $\cm$ is locally $\lambda$-presentable. It is easy to see that the left adjoint acts as described in the statement.
\end{proof}

From Section~\ref{sect:pres} this factorization system will be assumed to be \textit{proper}; that is, we will assume every element of $\ce$ to be an epimorphism and every element of $\cm$ a monomorphism. It will also be useful to recall the following notation from \cite{RTe1}:

\begin{defi}
	An object $X\in\cv$ is called {\em $\ce$-projective} if $\cv_0(X,-)\colon\cv_0\to\Set$ sends maps in $\ce$ to epimorphisms. While, $X$ is called {\em $\ce$-stable} if $e^X\colon A^X\to B^X$ is in $\ce$ whenever $e\colon A\to B$ is. 
\end{defi}
 
It is easy to see that if the unit is $\ce$-projective, every $\ce$-stable object is $\ce$-projective. 

A morphism $f\colon A\to B$ in $\cv$ is called a \textit{surjection} if $\cv_0(I,f)$ is surjective. A morphism $g\colon C\to D$ is called an \textit{injection} if it has the unique right lifting property to all surjections morphisms (see \cite{R1}). In what follows, $Surj$ will denote the class of surjections and $Inj$ the class of injections.

\begin{exams}\label{bases}
	We now give a list of bases of enrichment, together with an enriched factorization system, that will be relevant to the setting of the paper. 
{\setlength{\leftmargini}{1.6em}
\begin{enumerate}
	\item $\cv=\Met$ where distances $\infty$ are allowed. The resulting category is locally $\aleph_1$-presentable as a closed category and symmetric monoidal closed (where $A\otimes B$ is $A\times B$ with the metric $d((a,b),(a,b'))=d(a,a')+d(b,b')$). Discrete objects are discrete metric spaces (because $I$ is the one-point metric space). As factorization systems we can take either (surjective, isometry) or (dense, closed isometry) (see \cite{AR1}). Both factorization systems are enriched and proper. In the first case, $I$ is $\ce$-projective while in the second case it is not $\ce$-projective. In both cases, $\ce$ is closed under products in $\Met^\to$ and discrete objects are $\ce$-stable. Note that the first factorization system is $(Surj,Inj)$. We can also consider $\cv$ to be the category $\CMet$ of complete metric spaces. Here we only consider one factorization system given by (dense, isometry) (see \cite{AR1}).  
	\item $\cv=\Ban$ the category of Banach spaces with linear maps of norm $\leq 1$. This is symmetric monoidal closed where $\otimes$
	is the projective tensor product, and internal hom is the space $[K,L]$ consisting of \textit{all} bounded linear mappings (not necessarily of norm at most 1) from $K$ to $L$ (see \cite[Example~6.1.9h]{B}). Observe that $\Ban_0(K,L)$ is the unit ball of $[K,L]$; moreover, $\Ban$ is locally $\aleph_1$-presentable (\cite[Example~1.48]{AR}). 
	The factorization system (dense, isometry) corresponds to the (epi, strong mono) factorization system, and hence it is proper. It is also enriched because isometries are closed under powers.
	The (strong epimorphism, monomorphism) factorization is described in \cite[Section~1]{Po}; monomorphisms are one-to-one maps and strong epimorphisms (coincide with regular epimorphisms and) are quotient maps $X\to X/Y$ where $Y$ is a closed subspace of $X$. This factorization system is also enriched. Moreover, strong epimorphisms coincide with surjections \cite[Example~3.5(8)]{RTe1}. Discrete objects are coproducts of $\mathbb C$ (because $I=\mathbb C$) and they are strong epi-projective and strong epi-stable. The unit ball functor $\Ban_0(\mathbb C,-)\colon \Ban\to\Set$ has a left adjoint $l_1$ sending $X$ to the discrete object $X\cdot\mathbb C$. 
	
	\item $\cv=\Cat$ is cartesian closed and locally finitely presentable.
	The factorization system (surjective on objects, injective on objects fully faithful) coincides with $(Surj,Inj)$. Note that this factorization system is enriched but not proper (because $1$ is not a generator). Discrete objects are discrete categories and they are $\ce$-stable. 
	
	\item $\cv=\Pos$ the cartesian closed category of posets and monotone maps, which is locally finitely presentable as a closed category. This comes equipped with the proper enriched factorization system given by $(Surj,Inj)$. Discrete objects are $\ce$-stable and $\ce$-projective.
	
	\item We can consider $\cv$ to be any regular base of enrichment with the (regular epi, mono) factorization system, which is enriched and proper. The $\ce$-projective objects are the usual regular projectives; these are also $\ce$-stable if in addition $\cv$ is a symmetric monoidal quasivariety as in~\cite{LT20}. Examples of such a $\cv$ include the category $\Ab$ of abelian groups, $R$-$\Mod$ of modules of a ring $R$,  $\bo{GAb}$ of graded abelian groups, and $\bo{DGAb}$ of differentially graded abelian groups.
\end{enumerate}}
Within the last example falls also the case where $\cv$ is a Grothendieck topos with its cartesian closed structure. The factorization system corresponds to the (epi, mono) one; this is the only proper factorization system in a topos (since every epimorphism is regular). The notion of regular logic we consider in this paper will extend that of (single-sorted) regular logic internal to the topos~\cite{MLM}, by allowing the use of non-discrete arities.
\end{exams}

\begin{rem}
	We expect most of the results of this paper to generalize to the setting of~\cite{LWP} where the base of enrichment $\cv$ is assumed to be {\em locally bounded} instead of locally presentable. This allows to capture other important examples of enrichment that have a more topological flavour (such as enrichment over compactly generated topological spaces). Nonetheless, we one still needs $\cv$ to be monoidal {\em closed}, as this is fundamental for the interpretation of our enriched arities (see for instance Definition~\ref{structure}). We decided to restrict to the locally presentable case as most of the results we rely on (such as \cite{RTe,RTe1,LR12}) assume that.
\end{rem}

\section{Enriched languages}

In this section we introduce the notion of language that will be central in the development of our theory. This generalizes the concept we considered in \cite{RTe} by allowing the interpretation of relation symbols. Before proceeding, we set the following assumption which is satisfied by all the examples mentioned in the preliminaries.

\begin{assume}\label{assum-0}
	We fix an enriched factorization system $(\ce,\cm)$ on $\cv$ whose right class $\cm$ is made of monomorphisms and is closed in $\cv^\to$ under $\lambda$-filtered colimits.
\end{assume}

\begin{rem}
	Note that we need to assume $\cm$ to be contained in the class of monomorphisms of $\cv$ as this will imply the property of Remark~\ref{unique-restriction} below and will be needed in the proofs of Proposition~\ref{lambda-pres} and Lemma~\ref{strong-pres}.
\end{rem}

As mentioned above, the following definition extends the notion of functional languages from~\cite{RTe} to the case where also relation symbols are allowed:

\begin{defi}
	A (single-sorted) {\em language} $\mathbb L$ (over $\cv$) is the data of:\begin{enumerate}
		\item a set of function symbols $f\colon(X,Y)$ whose arities $X$ and $Y$ are objects of $\cv$;
		\item a set of relation symbols $R:X$, with arity an object $X$ of $\cv$.
	\end{enumerate}
	The language $\mathbb L$ is called {\em $\lambda$-ary} if all the arities appearing in $\mathbb L$ lie in $\cv_\lambda$.
\end{defi}

For every language $\LL$ there is an associated notion of $\LL$-structure:

\begin{defi}\label{structure}
	Given a language $\mathbb L$, an {\em $\mathbb L$-structure} is the data of an object $A\in\cv$ together with:\begin{enumerate}
		\item a morphism $f_A\colon A^X\to A^Y$ in $\cv$ for any function symbol $f\colon(X,Y)$ in $\mathbb L$;
		\item an $\cm$-subobject $r_A\colon R_A\rightarrowtail A^X$ for any relation symbol $R\colon X$ in $\mathbb L$.
	\end{enumerate}
	A {\em morphism of $\mathbb L$-structures} $h\colon A\to B$ is determined by a map $h\colon A\to B$ in $\cv$ making the following square commute
	\begin{center}
		\begin{tikzpicture}[baseline=(current  bounding  box.south), scale=2]
			
			\node (a0) at (0,0.8) {$A^X$};
			\node (b0) at (1,0.8) {$B^X$};
			\node (c0) at (0,0) {$A^Y$};
			\node (d0) at (1,0) {$B^Y$};
			
			\path[font=\scriptsize]
			
			(a0) edge [->] node [above] {$h^X$} (b0)
			(a0) edge [->] node [left] {$f_A$} (c0)
			(b0) edge [->] node [right] {$f_B$} (d0)
			(c0) edge [->] node [below] {$h^Y$} (d0);
		\end{tikzpicture}	
	\end{center} 
	for any $f\colon(X,Y)$ in $\mathbb L$, and such that there exist a map $h_R\colon R_A\to R_B$ completing the diagram below
	\begin{center}
		\begin{tikzpicture}[baseline=(current  bounding  box.south), scale=2]
			
			\node (a0) at (0,0.8) {$R_A$};
			\node (b0) at (1,0.8) {$R_B$};
			\node (c0) at (0,0) {$A^X$};
			\node (d0) at (1,0) {$B^X$};
			
			\path[font=\scriptsize]
			
			(a0) edge [dashed, ->] node [above] {$h_R$} (b0)
			(a0) edge [>->] node [left] {$r_A$} (c0)
			(b0) edge [>->] node [right] {$r_B$} (d0)
			(c0) edge [->] node [below] {$h^X$} (d0);
		\end{tikzpicture}	
	\end{center} 
	for any relation symbol $R:X$ in $\mathbb L$.
\end{defi}

\begin{rem}\label{unique-restriction}
	Since $\cm$ is made of monomorphisms, every map $h_R$ above is uniquely determined by the morphism $h$. 
\end{rem}

So far $\mathbb{L}$-structures and morphisms between them form just an ordinary category $\Str(\LL)_0$. As we did in~\cite{RTe}, we define a $\cv$-category $\Str(\LL)$ whose underlying ordinary category (as the name suggests) will be the one just introduced. This will require some steps. 

Note that, given a $\lambda$-ary language $\mathbb L$, we can see it as $\mathbb L=\mathbb F\cup\mathbb R$ the union of the $\lambda$-ary language $\mathbb F$, with only the function symbols of $\mathbb L$, and the $\lambda$-ary language $\mathbb R$, with only the relation symbols of $\mathbb L$. Then, the $\cv$-category $\Str(\FF)$ of $\FF$-structures was constructed in \cite[Section~3]{RTe}. The next step is to construct $\Str(\RR)$ for the relational part of the language $\mathbb R$. 

Let $\mathbb R_\cv$ the discrete $\cv$-category on the set $\mathbb R$, and let $2_\cv$ be the free $\cv$-category on the category $2=\{0\to 1\}$. We can consider the $\cv$-category $$2_\cv\otimes\mathbb R_\cv\cong \textstyle\sum_{\mathbb R}2_\cv$$ and the pushout in $\cv$-$\Cat$
\begin{center}
	\begin{tikzpicture}[baseline=(current  bounding  box.south), scale=2]
		
		\node (a0) at (0,0.8) {$\mathbb R_\cv$};
		\node (b0) at (1.1,0.8) {$2_\cv\otimes\mathbb R_\cv$};
		\node (c0) at (0,0) {$\cv_\lambda^{op}$};
		\node (d0') at (0.92,0.15) {$\ulcorner$};
		\node (d0) at (1.1,0) {$\Theta_\mathbb R$};
		
		\path[font=\scriptsize]
		
		(a0) edge [->] node [above] {$j$} (b0)
		(a0) edge [->] node [left] {$i$} (c0)
		(b0) edge [->] node [right] {$H_\mathbb R$} (d0)
		(c0) edge [->] node [below] {$\theta_\mathbb R$} (d0);
	\end{tikzpicture}	
\end{center} 
where $i(R:X)=X$ and $j(R:X)=(1,R:X)$. Before defining the $\cv$-category of $\mathbb R$-structures, note that we have isomorphisms
$$ [2_\cv\otimes\mathbb R_\cv,\cv]\cong \prod_\mathbb R [2_\cv,\cv]\cong \prod_\mathbb R\cv^\to. $$
Thus we can construct $\Str(\RR)$ as follows.

\begin{defi}
	The $\cv$-category $\Str(\RR)$ on a $\lambda$-ary language $\mathbb R$ of relation symbols is defined as the pullback
	\begin{center}
		\begin{tikzpicture}[baseline=(current  bounding  box.south), scale=2]
			
			\node (a0) at (0,0.9) {$\Str(\RR)$};
			\node (a0') at (0.3,0.7) {$\lrcorner$};
			\node (b0) at (2.2,0.9) {$[\Theta_\mathbb R,\cv]$};
			\node (c0) at (0,0) {$\cv\times \prod_\mathbb R\cm$};
			\node (d0) at (2.2,0) {$[\cv_\lambda^{op},\cv]\times \prod_\mathbb R\cv^\to$};
			
			\path[font=\scriptsize]
			
			(a0) edge [right hook->] node [above] {$V_\mathbb R$} (b0)
			(a0) edge [->] node [left] {$U_\mathbb R=(U_\mathbb R^1,U_\mathbb R^2)$} (c0)
			(b0) edge [->] node [right] {$([\theta_\mathbb R,\cv],[H_\mathbb R,\cv]')$} (d0)
			(c0) edge [right hook->] node [below] {$N\times \prod_\mathbb R J$} (d0);
		\end{tikzpicture}	
	\end{center} 
	where $[H_{\mathbb R},\cv]'$ is the composite of $[H_{\mathbb R},\cv]$ with the isomorphisms given above, $N=\cv(H_{\mathbb R}-,1)$ is fully faithful, and $J\colon \cm\hookrightarrow\cv^\to$ is the inclusion.
\end{defi}

Arguing as in \cite[Remark~3.4]{RTe} one can show that the definition of $\Str(\RR)$ does not depend on the choice of $\lambda$ that makes $\RR$ a $\lambda$-ary language.

\begin{propo}\label{R-str}
Let $\mathbb R$ be a $\lambda$-ary language of relation symbols; then:\begin{enumerate}
		\item the underlying category of $\Str(\RR)$ has $\mathbb R$-structures as objects and maps of $\mathbb R$-structures as morphisms;
		\item $\Str(\RR)$ is locally $\lambda$-presentable as a $\cv$-category;
		\item $U_\mathbb R\colon \Str(\RR)\to \cv\times \prod_\mathbb R\cm$ is a conservative right adjoint and preserves $\lambda$-filtered colimits.
	\end{enumerate}
\end{propo}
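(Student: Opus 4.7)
The plan is to treat all three statements as consequences of the pullback definition, by unpacking it carefully and then invoking standard closure properties of locally $\lambda$-presentable $\cv$-categories under pullbacks along accessible right adjoints.

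For \emph{part (1)}, I would first translate the pullback into a concrete description of objects. By the pushout defining $\Theta_\RR$, a $\cv$-functor $F\colon\Theta_\RR\to\cv$ is the data of a $\cv$-functor $F'\colon\cv_\lambda^{op}\to\cv$ together with, for each $R\colon X\in\RR$, an arrow $g_R\colon F_R^0\to F'(X)$ in $\cv$ (identifying $F(H_\RR(1,R\colon X))$ with $F'(X)$). The pullback condition then imposes two things: via the fully faithful $N$, $F'$ must be representable as $A^{(-)}|_{\cv_\lambda^{op}}$ for some $A\in\cv$, so $F'(X)=A^X$; and via $J$, each $g_R$ must lie in $\cm$, yielding an $\cm$-subobject $R_A\rightarrowtail A^X$. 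This is exactly the data of an $\RR$-structure in the sense of Definition~\ref{structure}. An identical unpacking of morphisms recovers the naturality squares required, and Remark~\ref{unique-restriction} takes care of uniqueness of the restriction to $R_A\to R_B$.

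For \emph{part (3)}, observe that the right vertical map $([\theta_\RR,\cv],[H_\RR,\cv]')$ is a restriction along $\cv$-functors between small $\cv$-categories, hence has both a left and a right adjoint (by Kan extension) and preserves all weighted limits and colimits. The bottom map $N\times\prod_\RR J$ is a product of right adjoints: $N$ is a right adjoint (its left adjoint being given by $\cv$-colimits via the density of $\cv_\lambda$) that preserves $\lambda$-filtered colimits, while $J$ is a right adjoint by Lemma~\ref{M-lp} and preserves $\lambda$-filtered colimits because, by Assumption~\ref{assum-0}, $\cm$ is closed under $\lambda$-filtered colimits in $\cv^\to$. Applying the closure of locally $\lambda$-presentable $\cv$-categories and $\lambda$-accessible right adjoints under pseudo-pullbacks, one concludes that $\Str(\RR)$ is locally $\lambda$-presentable (part (2)) and that $U_\RR$, as one of the projections, is a $\lambda$-accessible right adjoint.

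Finally, the conservativity of $U_\RR$ follows from two observations: first, the bottom arrow is conservative because both $N$ and $J$ are fully faithful; second, the right arrow is jointly conservative because every object of $\Theta_\RR$ lies in the image of either $\theta_\RR$ or $H_\RR$ (by construction of the pushout), so a natural transformation in $[\Theta_\RR,\cv]$ is invertible once its restrictions along $\theta_\RR$ and $H_\RR$ are. The pullback property then forces $U_\RR$ to reflect isomorphisms. The main obstacle I expect to face is making the pullback argument for local presentability fully rigorous in the enriched setting: one must check that the relevant 2-categorical closure theorem for locally $\lambda$-presentable $\cv$-categories applies verbatim to this cospan, and (should $|\RR|\geq\lambda$) ensure that the product $\prod_\RR\cm$ is locally $\lambda$-presentable rather than only $\mu$-presentable for some larger $\mu$; this last point can be absorbed by working at the rank determined by the language, consistently with the convention in \cite[Section~3]{RTe}.
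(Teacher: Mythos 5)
Your proposal is correct and follows essentially the same route as the paper: unpacking the pullback over the pushout $\Theta_\RR$ to identify objects and morphisms with $\RR$-structures, then invoking Bird's stability of locally $\lambda$-presentable $\cv$-categories under bipullbacks of $\lambda$-accessible right adjoints (the paper secures the bipullback via $([\theta_\RR,\cv],[H_\RR,\cv]')$ being an isofibration, matching your pseudo-pullback caveat) to get local $\lambda$-presentability and the adjointness/accessibility of $U_\RR$. Your conservativity argument via joint conservativity of restriction along $\theta_\RR$ and $H_\RR$ is a slightly more explicit version of the paper's appeal to the concrete description from part (1), and your worry about $\prod_\RR\cm$ is unnecessary since products of locally $\lambda$-presentable $\cv$-categories are locally $\lambda$-presentable for any small index set, as the paper notes citing Bird.
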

\begin{proof}
	$(1)$ By construction, an object of $\Str(\RR)$ is a $\cv$-functor $\Theta_\mathbb R\to\cv$ whose restriction along $\theta_\mathbb R$ is isomorphic to $A^{(H_{\mathbb R}-)}$ for some $A\in\cv$, and whose restriction along $H_\mathbb R$ lands pointwise in $\cm$. Now, by definition of $\theta_\mathbb R$, to give the data above is equivalent to give an object $A\in\cv$ together with an ordinary functor $2\times\mathbb R\to \cv_0$ which sends $(1,R:X)$ to $A^X$, for any $(R:X)\in\mathbb R$, and sends every arrow in $2\times\mathbb R$ to a map in $\cm$. In other words, it is equivalent to give an object $A\in\cv$ together with an assignment $\mathbb R\to \cm$ sending a relation $R:X$ to a map in $\cm$ with codomain $A^X$. This is exactly the data of an $\mathbb R$-structure. The same argument applies to morphisms of the underlying category. 
	
	$(2)$ The pullback defining $\Str(\RR)$ is a bipullback since $([\theta_\mathbb R,\cv],[H_\mathbb R,\cv]')$ is an isofibration. Moreover, the $\cv$-functors $([\theta_\mathbb R,\cv],[H_\mathbb R,\cv]')$ and $N\times \prod_\mathbb R J$ are both continuous and $\lambda$-filtered colimit preserving (by the lemma above), and the $\cv$-categories involved are locally $\lambda$-presentable (for $\cm$ this follows from Lemma~\ref{M-lp}, then use stability of locally $\lambda$-presentable $\cv$-categories under products~\cite{Bird}). Thus $\Str(\RR)$ is locally $\lambda$-presentable (again by~\cite{Bird}), and both $U_\mathbb R$ and $V_\mathbb R$ preserve all limits and $\lambda$-filtered colimits.
	
	$(3)$ We have already shown that $U_\mathbb R$ is continuous and preserves $\lambda$-filtered colimits. Since $\cv\times \prod_\mathbb R\cm$ is locally presentable, then it also has a left adjoint. The fact that $U_\mathbb R$ is conservative follows from $(1)$ and the definition of $\mathbb R$-structure.
\end{proof}

 We can now define the $\cv$-category of $\mathbb L$-structures for a general $\lambda$-ary language $\mathbb L$. 

\begin{defi}
	The $\cv$-category $\Str(\LL)$ on a $\lambda$-ary language $\mathbb L$ is defined as the pullback
	\begin{center}
		\begin{tikzpicture}[baseline=(current  bounding  box.south), scale=2]
			
			\node (a0) at (0,0.9) {$\Str(\LL)$};
			\node (a0') at (0.3,0.7) {$\lrcorner$};
			\node (b0) at (1.4,0.9) {$\Str(\FF)$};
			\node (c0) at (0,0) {$\Str(\RR)$};
			\node (d0) at (1.4,0) {$\cv$};
			
			\path[font=\scriptsize]
			
			(a0) edge [->] node [above] {$J_\mathbb F$} (b0)
			(a0) edge [->] node [left] {$J_\mathbb R$} (c0)
			(b0) edge [->] node [right] {$U_\mathbb F$} (d0)
			(c0) edge [->] node [below] {$U_\mathbb R^1$} (d0);
		\end{tikzpicture}	
	\end{center} 
	where $\mathbb L=\mathbb F\cup\mathbb R$ has been written as the union of its sub-languages of function and relation symbols respectively. We denote by $U\colon \Str(\LL)\to \cv$ the diagonal of the square.
\end{defi}

Below we denote by $V\colon \Str(\LL)\to \prod_\mathbb R\cm$ the composite $U_\mathbb R^2\circ J_\mathbb R$ sending an $\mathbb L$-structure $A$ to the family $(R_A\rightarrowtail A^X)_{(R:X)}$.

\begin{theo}\label{L-str}
	 Let $\mathbb L=\mathbb F\cup\mathbb R$ be a $\lambda$-ary language; then:
\begin{enumerate}
		\item the underlying category of $\Str(\LL)$ has $\mathbb L$-structures as objects and maps of $\mathbb L$-structures as morphisms;
		\item the $\cv$-category $\Str(\LL)$ is locally $\lambda$-presentable;
		\item the forgetful $\cv$-functor $U\colon \Str(\LL)\to \cv$ is a right adjoint and preserves $\lambda$-filtered colimits;
		\item the pair $(U,V)\colon \Str(\LL)\to \cv\times \prod_\mathbb R\cm$ is a conservative right adjoint which preserves $\lambda$-filtered colimits.
	\end{enumerate}
\end{theo}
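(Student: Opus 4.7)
The plan is to derive all four parts from the pullback definition of $\Str(\LL)$ by combining the already-established facts about $\Str(\FF)$ (from \cite[Section~3]{RTe}) and the just-proved Proposition~\ref{R-str} about $\Str(\RR)$, essentially reproducing the strategy used for $\Str(\RR)$ one level up.

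For part (1), I would unpack the pullback concretely: an object of $\Str(\LL)$ amounts to an $\FF$-structure $A_\FF$ and an $\RR$-structure $A_\RR$ whose underlying $\cv$-objects $U_\FF(A_\FF)$ and $U_\RR^1(A_\RR)$ agree. Invoking part (1) of Proposition~\ref{R-str} (and the analogous description for $\FF$-structures from \cite{RTe}), this is exactly an object $A\in\cv$ carrying both the function-symbol interpretations and the relation-symbol $\cm$-subobjects, i.e.\ an $\LL$-structure. The argument for morphisms is identical, using that the commuting squares required in Definition~\ref{structure} are precisely the conditions enforced by the pullback on morphisms.

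For parts (2)--(4), the key is to observe that the defining square is a bipullback: this holds because $U_\FF$ (or equivalently $U_\RR^1$) is an isofibration, which follows from the construction in \cite{RTe} (respectively from the fact that $U_\RR^1$ is built from the isofibration appearing in Proposition~\ref{R-str}). By Proposition~\ref{R-str}(2)(3) and the corresponding results in \cite{RTe}, both $\Str(\FF)$ and $\Str(\RR)$ are locally $\lambda$-presentable $\cv$-categories and both $U_\FF$ and $U_\RR^1$ preserve all limits and $\lambda$-filtered colimits. Hence Bird's pullback theorem~\cite{Bird} applies and gives that $\Str(\LL)$ is locally $\lambda$-presentable, proving (2), and that the projections $J_\FF$ and $J_\RR$ preserve all limits and $\lambda$-filtered colimits. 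Since the pullback of $\cv$-categories of this kind is again a bilimit of right adjoints, $J_\FF$ and $J_\RR$ are themselves right adjoints.

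For (3), I compose: $U = U_\FF\circ J_\FF$ is then a right adjoint preserving $\lambda$-filtered colimits. For (4), $V = U_\RR^2\circ J_\RR$ is likewise a right adjoint preserving $\lambda$-filtered colimits, and so is the pair $(U,V)$. It remains to check conservativity of $(U,V)$: if $(U,V)f$ is an isomorphism, then $U_\RR(J_\RR f)=(Uf,Vf)$ is an iso in $\cv\times\prod_\RR\cm$, so by conservativity of $U_\RR$ (Proposition~\ref{R-str}(3)) the map $J_\RR f$ is an iso; similarly $U_\FF(J_\FF f)=Uf$ is an iso so $J_\FF f$ is an iso by conservativity of $U_\FF$. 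A morphism in a pullback of categories whose two projections are invertible is invertible itself, so $f$ is an iso.

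The only genuinely delicate step is the verification that the square is a bipullback, i.e.\ that one of $U_\FF$, $U_\RR^1$ is an isofibration; everything else is assembly of the preceding results. This is not hard in practice because these forgetful functors factor through the pullback squares of Proposition~\ref{R-str} and of the analogous construction for $\FF$-structures, where the legs being inverted are themselves isofibrations by construction.
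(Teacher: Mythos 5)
Your proposal is correct and follows essentially the same route as the paper, whose proof is precisely the one-line combination of \cite[Proposition~3.5]{RTe}, Proposition~\ref{R-str}, and Bird's stability of locally $\lambda$-presentable $\cv$-categories under bilimits, with the bipullback property justified by $U_\FF$ and $U_\RR^1$ being isofibrations. You have simply spelled out the details (the unpacking for part~(1), the composition arguments for parts~(3)--(4), and the joint-conservativity check) that the paper leaves implicit, and all of these steps are sound.
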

\begin{proof}
	This is obtained simply by combining \cite[Proposition~3.5]{RTe} and \ref{R-str} and using the stability of locally $\lambda$-presentable categories under bilimits~\cite{Bird} (since $U_\mathbb F$ and $U_\mathbb R$ are isofibrations, the pullback above is also a bipullback).
\end{proof}

Let us show an example of $\cv$-category of structures:

\begin{exam}\label{Sketches}
	Let $\cv=\bo{Cat}$ with the (surjective on objects, injective on objects fully faithful) factorization system. We define a relational language for the 2-category of sketches with specified shape of cones and cocones.
	
	Given a $C\in\Cat$, denote by $0*C$ the category obtained by adding to $C$ an initial object $0$, and let $k\colon C\to 0*C$ be the inclusion. Then, given another category $A$, an element $x\in A^{0*C}$ is a cone over some functor $C\to A$, while an element $y\in A^C$ is just a functor $C\to A$. Dually, we can define $C*1$ by adding a terminal object, so that an element $x\in A^{C*1}$ is a cocone over some functor $C\to A$.
	
	Given small collections $S$ and $T$ of categories, consider the relational language $\RR_{S,T}$ consisting of a relation symbol $R_C:0*C$ for any $C\in S$ and a relation symbol $R^D: D*1$ for any $D\in T$. Then, its is easy to see that an $\RR_{S,T}$-structure is the data of a category $A$ together with a set of cones of shape $C$ for each $C\in S$, and of cocones of shape $D\in T$. This is the same as a {\em sketch} whose specified cones and cocones have shape in $S$ and $T$ respectively. Similarly, a morphism of $\RR_{S,T}$-structures is the same as a morphism of sketches: a functor $A\to B$ that maps specified co/cones in $A$ into specified co/cones in $B$.
	
	As a consequence $\Str(\RR_{S,T})\cong \tx{Skt}_{S,T}$ is the 2-category of sketches with cones of shape in $S$ and cocones of shape in $T$; the 2-cells in $\tx{Skt}_{S,T}$ are just natural transformations between the underlying functor of a morphism of sketches. By Theorem~\ref{L-str} above this is locally presentable, and in particular complete and cocomplete. 
\end{exam}

We conclude this section by characterizing the $\lambda$-presentable objects of $\Str(\LL)$. We shall need first the following result:

\begin{propo}
	Given a language $\mathbb L=\mathbb F\cup\mathbb R$, the $\cv$-functor 
$$
J_\mathbb F\colon\Str(\LL)\to \Str(\FF)
$$ 
has both a left and a right adjoint which can be described as follows:
{\setlength{\leftmargini}{1.6em}
\begin{itemize}
		\item The left adjoint $F\colon\Str(\FF)\to \Str(\LL)$ takes an $\mathbb F$-structure $A$ to the $\mathbb L$-structure obtained from $A$ by interpreting the relations $R:X$ in $\mathbb R$ with the $\cm$-subobject
		$$ F0\rightarrowtail A^X $$
		induced by the $(\ce,\cm)$-factorization of $!\colon 0\to A^X$.
		\item The right adjoint $R\colon\Str(\FF)\to \Str(\LL)$ takes an $\mathbb F$-structure $A$ to the $\mathbb L$-structure obtained from $A$ by interpreting $R:X$ in $\mathbb R$ with the $\cm$-subobject
		$$ 1\colon A^X\rightarrowtail A^X $$
		given by the identity on $A^X$.
	\end{itemize}}
	Both $F$ and $R$ are fully faithful.
\end{propo}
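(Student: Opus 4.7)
The plan is to define $F$ and $R$ directly on both objects and morphisms, establish the adjunctions $F\dashv J_\FF\dashv R$ via the orthogonality of $(\ce,\cm)$, and conclude fully faithfulness from the identities $J_\FF F=\Id$ and $J_\FF R=\Id$.

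For $F$, on an object $A$ I let $FA$ have the same underlying $\FF$-structure as $A$, and interpret each relation symbol $R\colon X$ by the $\cm$-subobject $(FA)_R\rightarrowtail A^X$ arising from the $(\ce,\cm)$-factorization $0\twoheadrightarrow (FA)_R\rightarrowtail A^X$ of the unique map from the initial object $0$ of $\cv$. On an $\FF$-morphism $h\colon A\to A'$, the underlying $\cv$-morphism of $Fh$ is $h$ itself, and the required compatibility with relations is supplied by the unique diagonal fill-in of the commutative square having $0\twoheadrightarrow (FA)_R$ on the left, $(FA')_R\rightarrowtail (A')^X$ on the right, the initial map $0\to (FA')_R$ on top, and the composite $(FA)_R\rightarrowtail A^X\xrightarrow{h^X}(A')^X$ on the bottom. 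Functoriality of $F$ then follows from uniqueness of these fill-ins.

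To obtain $F\dashv J_\FF$: for $A\in\Str(\FF)$ and $B\in\Str(\LL)$, giving an $\LL$-morphism $FA\to B$ is equivalent to giving its underlying $\FF$-morphism $g\colon A\to J_\FF B$, since for each $R\colon X$ the further required lift $(FA)_R\to R_B$ over $g^X\colon A^X\to B^X$ exists uniquely by orthogonality of the $\ce$-arrow $0\twoheadrightarrow (FA)_R$ against the $\cm$-arrow $R_B\rightarrowtail B^X$. Naturality in $A$ and $B$ is then routine.

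Dually for $R$: on objects, $RA$ has underlying $\FF$-structure $A$ with each relation interpreted by $\id\colon A^X\to A^X$, and functoriality on morphisms is automatic. An $\LL$-morphism $B\to RA$ is then the same datum as an $\FF$-morphism $h\colon J_\FF B\to A$, since the required lift $R_B\to(RA)_R=A^X$ is forced to be the composite $R_B\rightarrowtail B^X\xrightarrow{h^X}A^X$, which is always well defined. Thus $J_\FF\dashv R$, and the fully faithfulness of $F$ and $R$ follows from $J_\FF F=\Id$ and $J_\FF R=\Id$. The adjunctions are automatically $\cv$-enriched: because $\Str(\LL)$ arises as a $\cv$-categorical pullback with forgetful functors to the underlying $\cv$-object, $J_\FF$ preserves powers by Theorem~\ref{L-str}, and any ordinary adjoint to a power-preserving $\cv$-functor is a $\cv$-adjoint. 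The main subtlety is the functoriality of $F$ on morphisms, which hinges critically on the uniqueness of diagonal fill-ins provided by the factorization system.
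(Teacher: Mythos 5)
Your ordinary-level argument is correct and fills in what the paper dismisses as ``easy to see'': the bijection $\Str(\LL)(FA,B)\cong\Str(\FF)(A,J_\FF B)$ via orthogonality of the $\ce$-map $0\twoheadrightarrow (FA)_R$ against $r_B\in\cm$, the trivial lifts giving $\Str(\LL)(B,RA)\cong\Str(\FF)(J_\FF B,A)$, and fully faithfulness from the unit of $F\dashv J_\FF$ and the counit of $J_\FF\dashv R$ being identities (which is what $J_\FF F=\Id$ and $J_\FF R=\Id$ deliver, since fully faithfulness of a left, resp.\ right, adjoint is equivalent to the unit, resp.\ counit, being invertible).

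However, there is a genuine gap in the enrichment step, and it is exactly where the paper's proof does its real work. Your claim that ``any ordinary adjoint to a power-preserving $\cv$-functor is a $\cv$-adjoint'' is only true on one side: preservation of powers (cotensors) by $J_\FF$ enriches the ordinary \emph{left} adjoint $F$, but to enrich the ordinary \emph{right} adjoint $R$ one needs $J_\FF$ to preserve \emph{copowers} (tensors) --- this is the content of the criterion \cite[Theorem~4.85]{Kel82} that the paper invokes, and it follows from the string of natural isomorphisms $\cv_0(V,\Str(\LL)(B,RA))\cong\Str(\LL)_0(V\cdot B,RA)\cong\Str(\FF)_0(J_\FF(V\cdot B),A)$, which matches $\cv_0(V,\Str(\FF)(J_\FF B,A))$ only if $J_\FF(V\cdot B)\cong V\cdot J_\FF B$. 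Unlike power preservation, which is automatic because $\Str(\LL)$ is a pullback of continuous functors, copower preservation does not come for free: colimits in $\Str(\LL)$ are not simply inherited from $\Str(\FF)$. The paper proves it by explicitly constructing the copower $X\cdot A$ in $\Str(\LL)$: take the copower $X\cdot J_\FF A$ in $\Str(\FF)$ and interpret each relation $R:Y$ by the $(\ce,\cm)$-factorization of the composite $X\cdot R_A\xrightarrow{X\cdot r_A}X\cdot(A^Y)\to(X\cdot A)^Y$, then checks the universal property. Without this (or some substitute), the statement that $R$ is a right $\cv$-adjoint --- as opposed to merely an ordinary right adjoint to the underlying functor of $J_\FF$ --- remains unproven in your proposal.
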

\begin{proof}
	It is easy to see that $F$ and $R$ define respectively left and right adjoints to the underlying ordinary functor of $J_\mathbb F$, and that they are fully faithful as ordinary functors. To conclude, by \cite[Theorem~4.85]{Kel82}, it is enough to prove that $J_\mathbb F$ preserves powers and copowers by elements of $\cv$. The former follows by definition since $J_\mathbb F$ is obtained by pulling back continuous functors. For the latter, consider an $\mathbb L$-structure $A$ and some $Y\in\cv$; then the copower $X\cdot A$ of $A$ by $X$ can be constructed by considering the copower $X\cdot J_\mathbb FA$ in $\Str(\FF)$, and by extending it to an $\mathbb L$-structure as follows: for any relation $R:Y$ we define $r_{X\cdot A}\colon R_{X\cdot A}\rightarrowtail (X\cdot A)^Y$ as the $\cm$-subobject obtained by taking the $(\ce,\cm)$-factorization of the composite
	$$ X\cdot R_A \xrightarrow{X\cdot r_A} X\cdot (A^Y)\longrightarrow (X\cdot A)^Y $$
	in $\cv$. It follows that $J_\mathbb F$ also preserves copowers.
\end{proof}

In the next proposition we characterize the $\lambda$-presentable objects of $\Str(\LL)$. We shall need the following definition:

\begin{defi}
	We say that an element $A\in\cv$ is an $\ce$-\textit{quotient} of $Y\in\cv$ if there exists a map $Y\twoheadrightarrow A$ in $\ce$.
\end{defi}

Note that when $m\colon A\rightarrowtail B$ is in $\cm$ and $A$ is an $\ce$-quotient of the initial object $0$, then $m$ is induced by the $(\ce,\cm)$-factorization of the unique map $!\colon 0\to B$.

\begin{propo}\label{lambda-pres}
	An $\mathbb L$-structure $A$ is $\lambda$-presentable in $\Str(\LL)$ if and only if:
	{\setlength{\leftmargini}{2em}
		\begin{enumerate}
		\item[(i)] $J_{\mathbb{F}}(A)$ is $\lambda$-presentable in $\Str(\FF)$;
		\item[(ii)] for every $R: X$ in $\LL$, its interpretation $R_A$ is an $\ce$-quotient of some $Y\in\cv_\lambda$;
		\item[(iii)] all but a $\lambda$-small number of the $R_A$, for $R:X$ in $\mathbb L$, are $\ce$-quotients of $0$.
	\end{enumerate}}
\end{propo}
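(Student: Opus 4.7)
The plan is to prove each implication separately.

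For the forward direction, suppose $A$ is $\lambda$-presentable in $\Str(\LL)$. To get (i), I would show that the right adjoint $R\colon\Str(\FF)\to\Str(\LL)$ from the preceding proposition preserves $\lambda$-filtered colimits: by Theorem~\ref{L-str} such colimits in $\Str(\LL)$ are computed through $(U,V)$, by Assumption~\ref{assum-0} those in $\cm$ agree with those in $\cv^\to$, and since each arity $X\in\cv_\lambda$ the identities $1_{A_i^X}$ colimit to $1_{(\colim A_i)^X}$. Hence $J_{\mathbb F}$, as left adjoint of a $\lambda$-accessible $R$, preserves $\lambda$-presentable objects, giving (i).

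For (ii) and (iii), I would use that since $(U,V)\colon\Str(\LL)\to \cv\times\prod_\RR\cm$ is a conservative $\lambda$-accessible right adjoint with left adjoint $G$, the objects $G(P)$ for $P$ $\lambda$-presentable in $\cv\times\prod_\RR\cm$ form a strong $\cv$-generator of $\Str(\LL)_\lambda$; consequently $A$ is a retract of a $\lambda$-small weighted colimit of such $G(P)$'s. It suffices then to verify (ii) and (iii) for $G(P)$, as well as their closure under retracts and under $\lambda$-small weighted colimits. Concretely, $G(X,(m_\alpha\colon R_\alpha\rightarrowtail Y_\alpha))$ has underlying $\mathbb F$-structure $F_\mathbb F(X+\sum_\alpha X_\alpha\otimes Y_\alpha)$ and $R^\alpha_G$ is the $\cm$-image of the composite $R_\alpha\rightarrowtail Y_\alpha\to(F_\mathbb F(X+\sum_\beta X_\beta\otimes Y_\beta))^{X_\alpha}$, hence an $\ce$-quotient of $R_\alpha$. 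For $P$ $\lambda$-presentable, Lemma~\ref{M-lp} exhibits each $m_\alpha$ as a retract of $L(f)$ for some $f$ $\lambda$-presentable in $\cv^\to$, so $R_\alpha$ is itself an $\ce$-quotient of some $U\in\cv_\lambda$ (using that split epis lie in $\ce$ since $\cm$ consists of monos) and $R_\alpha=0$ for all but $\lambda$-many $\alpha$; composing $\ce$-maps then yields (ii) and (iii) for $G(P)$. Closure under retracts is direct from Remark~\ref{unique-restriction}. For a $\lambda$-small conical colimit $A=\colim_iA_i$, the key lemma is the identification of $R^\alpha_A$ with the $\cm$-image of the canonical $\cv$-map $\colim R^\alpha_{A_i}\to A^{X_\alpha}$; I would prove this by defining an $\LL$-structure $A'$ with the same $\mathbb F$-part as $A$ and with this $\cm$-image as its $\alpha$-relation, then using the universal property of $\colim A_i$ to produce an $\LL$-morphism $A\to A'$ underlying $\id_A$, forcing the two $\cm$-subobjects to coincide. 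Closure of $\ce$ under colimits in $\cv^\to$ then propagates (ii) and (iii); copowers by objects of $\cv_\lambda$ are treated analogously.

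For the backward direction, assuming (i)-(iii), I would show that $\Str(\LL)(A,-)$ preserves $\lambda$-filtered colimits. Given $B=\colim_i B_i$ in $\Str(\LL)$, Theorem~\ref{L-str} gives $R^\alpha_B\cong\colim R^\alpha_{B_i}$ in $\cv$. Injectivity of $\colim\Str(\LL)(A,B_i)\to\Str(\LL)(A,B)$ reduces to (i) since $\LL$-morphisms are determined by their underlying $\mathbb F$-morphisms. For surjectivity, given $f\colon A\to B$, (i) lifts its underlying $\mathbb F$-morphism to some $\tilde f\colon A\to B_j$ in $\Str(\FF)$. For each $\alpha$, (ii) supplies $e_\alpha\colon Y_\alpha\twoheadrightarrow R^\alpha_A$ with $Y_\alpha\in\cv_\lambda$; $\lambda$-presentability of $Y_\alpha$ factors the composite $Y_\alpha\to\colim R^\alpha_{B_k}$ through some $R^\alpha_{B_{j_\alpha}}$; after moving to a further stage if needed so that the square involving $e_\alpha$ and $R^\alpha_{B_{j_\alpha}}\rightarrowtail B_{j_\alpha}^{X_\alpha}$ commutes on the nose, $(\ce,\cm)$-orthogonality produces the required diagonal $R^\alpha_A\to R^\alpha_{B_{j_\alpha}}$. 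By (iii) only $\lambda$-many $\alpha$ need individual attention, and $\lambda$-filteredness of the $B_i$-diagram provides a common upper bound.

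The main technical obstacle is the conical-colimit identification $R^\alpha_{\colim A_i}\cong$ the $\cm$-image of $\colim R^\alpha_{A_i}$: because $U$ is a right adjoint, colimits in $\Str(\LL)$ are not computed pointwise in $\cv$, so one must invoke the universal property of the colimit to show that the candidate $\cm$-subobject actually exhausts $R^\alpha_A$.
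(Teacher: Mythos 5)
Your proof is correct, and its overall skeleton agrees with the paper's: a direct filtered-colimit argument for the ``if'' direction, and, for ``only if'', generation of the $\lambda$-presentables by a strong generator satisfying (i)--(iii) together with closure under $\lambda$-small colimits and retracts. Your backward direction is essentially the paper's step (1), including the two points the paper glosses more quickly (advancing the index so the square commutes on the nose before applying $(\ce,\cm)$-orthogonality, and a common upper bound for the $\lambda$-small set of relations not $\ce$-quotients of $0$). Where you genuinely diverge is the forward direction: the paper takes $\cg$ to be the class of \emph{all} structures satisfying (i)--(iii) and proves strong generation by hand, factoring maps $B\to J_{\mathbb F}L$ and $m\to r_L$ (with $m\in\cm_\lambda$) through ad hoc structures built by a single $(\ce,\cm)$-factorization; you instead use the left adjoint $G$ of $(U,V)$ applied to $\lambda$-presentables of $\cv\times\prod_{\mathbb R}\cm$, which forces you to compute $G$ explicitly (your description is right: by orthogonality, a morphism into $r^\alpha_B$ in $\cm$ amounts to $v_\alpha m_\alpha$ factoring through $r^\alpha_B$) and to identify $\lambda$-presentables of $\cm$ via Lemma~\ref{M-lp} and of the infinite product (componentwise presentable, all but $\lambda$-small many initial). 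Your route buys two things the paper leaves implicit: a clean standalone proof of (i) from $\lambda$-accessibility of the right adjoint of $J_{\mathbb F}$, and an actual proof of the identification of $R_{\colim_i A_i}$ with the $\cm$-image of $\colim_i R_{A_i}\to(\colim_i A_i)^X$ --- your $A'$ device works, provided you also note that $\id$ underlies an $\LL$-morphism $A'\to A$, so the induced $A\to A'$ composes with it to the identity by uniqueness of cocone-induced maps, forcing $Uh=\id$. Your enabling observation that split (hence regular) epimorphisms lie in $\ce$ because $\cm$ consists of monomorphisms is exactly what makes retract closure and the comparison $\textstyle\sum_i Y_i\twoheadrightarrow\colim_i R_{A_i}$ go through, and it is correct under Assumption~\ref{assum-0}.
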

\begin{proof}
	It is enough to work at the level of the underlying ordinary category $\Str(\LL)_0$ since enriched and ordinary finitely presentable objects coincide~\cite[Proposition~7.5]{Kel82}. Let $\cg$ be the full subcategory of $\Str(\LL)_0$ with objects those $A$ satisfying the three properties above. To conclude it is enough to show (1) that every object of $\cg$ is $\lambda$-presentable, (2) that $\cg$ is closed under $\lambda$-small colimits in $\Str(\LL)_0$, and (3) that $\cg$ is strongly generating.
	
	(1). Let $A\in\cg$ and $h\colon A\to L:=\colim_iL_i$ be a morphism into a filtered colimit of objects $L_i$ in $\Str(\LL)_0$, denote the colimiting cone by $(c_i\colon L_i\to L)_i$. Since $J_{\mathbb{F}}(A)$ is $\lambda$-presentable and $J_{\mathbb{F}}$ preserves $\lambda$-filtered colimits, $J_{\mathbb{F}}(h)$ factors through some $J_{\mathbb{F}}(c_i)$ as a morphism $k\colon J_{\mathbb{F}}(A)\to J_{\mathbb{F}}(L_i)$ of $\mathbb F$-structures. We need to extend $k$ to a map of $\mathbb L$-structures, maybe changing index $i$.\\
	Going back in $\Str(\LL)_0$, for every relation $R:X$ we have the solid part of diagram below
	\begin{center}
		\begin{tikzpicture}[baseline=(current  bounding  box.south), scale=2]
			
			\node (0) at (-2,0.8) {$Y$};
			\node (a) at (-1,0.8) {$R_A$};
			\node (a0) at (0,0.8) {$R_{L_i}$};
			\node (b0) at (1,0.8) {$R_L$};
			\node (b) at (-1,0) {$A^X$};
			\node (c0) at (0,0) {$L_i^X$};
			\node (d0) at (1,0) {$L^X$};
			
			\path[font=\scriptsize]
			
			(0) edge [->>] node [above] {$q$} (a)
			(a) edge [dashed, ->] node [below] {$k_R$} (a0)
			(a) edge [bend left,->] node [above] {$h_R$} (b0)
			(a) edge [>->] node [left] {$r_A$} (b)
			(b) edge [->] node [below] {$k^X$} (c0)
			
			(a0) edge [->] node [below] {$(c_i)_R$} (b0)
			(a0) edge [>->] node [right] {$r_{L_i}$} (c0)
			(b0) edge [>->] node [right] {$r_L$} (d0)
			(c0) edge [->] node [below] {$c_i^X$} (d0);
		\end{tikzpicture}	
	\end{center}
	with $Y\in\cv_\lambda$. Now, if $Y=0$, then $k_R$ is the unique map into $R_{L_i}$ induced by $!\colon 0\to R_{L_i}$ and the orthogonality property of the factorization system. Otherwise, since $R_L$ is the $\lambda$-filtered colimit of the $R_{L_i}$, the map $h_Rq$ factors through some $(c_j)_R\colon R_{L_j}\to R_L$. Without loss of generality we can assume $i=j$ and hence we have a map $s\colon Y\to R_{L_i}$ such that $ r_{L_i}s= (k^Xr_A)q$. Since $q$ is in $\ce$ and and $r_{L_i}$ in $\cm$, there is an induced $k_R\colon R_A\to R_{L_i}$ making the square in the diagram above commute. Since there are only a $\lambda$-small number relations which are not $\ce$-quotients of $0$, we can repeat this argument and obtain a map $k\colon A\to L_i$ in $\Str(\LL)_0$ such that $c_ik=h$. By construction, any two such factorizations $k\colon A\to L_i$ and $k'\colon A\to L_j$ of $h$ must coincide after composition with some maps $i\to l$ and $j\to l$. This proves that $A$ is $\lambda$-presentable in $\Str(\LL)_0$.
	
	(2). Since $J_\mathbb F$ is cocontinuous, the objects of $\Str(\LL)_0$ satisfying the first condition are clearly closed under $\lambda$-small colimits. \\
	In general, given a $\lambda$-small limit $A:=\colim_i(A_i)$ in $\Str(\LL)_0$ and a relation symbol $R:X$ in $\mathbb L$, the $\cm$-subobject $R_A$ of $A^X$ is determined by the $(\ce,\cm)$-factorization of the induced map $\colim_i(R_{A_i})\to A^X$. It follows that $R_A$ is an $\ce$-quotient of $\colim_i(R_{A_i})$. Thus, if each $R_{A_i}$ is an $\ce$-quotient of some $Y_i\in\cv_\lambda$, then $\colim_i(R_{A_i})$ is an $\ce$-quotient of $Y:=\sum_iY_i$, which is still in $\cv_\lambda$. It follows that $R_A$ is also an $\ce$-quotient of $Y$. Thus the objects satisfying the second condition are clearly closed under $\lambda$-small colimits. \\
	Since $0$ is stable under $\lambda$-small colimits, then also are its $\ce$-quotients (by the arguments above); therefore also the objects satisfying the third condition are closed under $\lambda$-small colimits.
	
	(3). Notice that, since the projections into $\Str(\FF)$ and into $\cm$ (for any relation in $\mathbb L$) are jointly conservative, to show that $\cg$ is a strong generator of $\Str(\LL)_0$ it is enough to show the following for any $\mathbb L$-structure $L$. (a) That any morphism of $\mathbb F$-structures $B\to J_\mathbb F(L)$, with $B$ $\lambda$-presentable, factors through some $J_\mathbb F(A\to L)$ with $A\in\cg$. (b) That, for any relation $R:X$, any morphism $m\to r_L$ in $\cm$, with $m\in\cm_\lambda$, factors through the $R$-component of some $A\to L$ with $A\in\cg$. \\
	Point (a) is trivial since, by definition $J_\mathbb F(\cg)=  \Str(\FF)_\lambda$. For (b), consider a relation $R:X$ and any morphism $(u,v)\colon m\to r_L$ in $\cm$, with $m\colon B\rightarrowtail C$ in $\cm_\lambda$; so that $u\colon B\to R_L$, $v\colon C\to L^X$, and $r_Lu=vm$. Note that $m\colon B\rightarrowtail C$ is $\lambda$-presentable in $\cm$ if and only if $C\in\cv_\lambda$ and $B$ is an $\ce$-quotient of some $Y\in\cv_\lambda$.\\ 
	Since we can write the $\mathbb F$-structure $J_\mathbb F(L)$ as a $\lambda$-filtered colimit of $\lambda$-presentable $\mathbb F$-structures, and this colimit is computed as in $\cv$, the map $v$ factors through some $h^X\colon A^X\to L^X$ arising from an $h\colon A\to L$ in $\Str(\FF)$. Therefore we have the solid part of the diagram below
	\begin{center}
		\begin{tikzpicture}[baseline=(current  bounding  box.south), scale=2]
			
			\node (0) at (-2,0) {$Y$};
			\node (a) at (-1,-0.8) {$C$};
			\node (a0) at (0,-0.8) {$A^X$};
			\node (b0) at (1,-0.8) {$L^X$};
			\node (b) at (-1,0) {$B$};
			\node (c0) at (0,0) {$R_A$};
			\node (d0) at (1,0) {$R_L$};
			
			\path[font=\scriptsize]
			
			(0) edge [->>] node [above] {$\ce$} (b)
			(a) edge [->] node [below] {} (a0)
			(a) edge [<-<] node [left] {$m$} (b)
			(b) edge [dashed, ->>] node [below] {$\ce$} (c0)
			
			(a0) edge [->] node [below] {$h^X$} (b0)
			(a0) edge [dashed, <-<] node [left] {$\cm$} (c0)
			(b0) edge [<-<] node [right] {$r_L$} (d0)
			(c0) edge [dashed, ->] node [below] {$h_R$} (d0)
			
			(b) edge [bend left,->] node [above] {$u$} (d0)
			(a) edge [bend right,->] node [below] {$v$} (b0);
		\end{tikzpicture}	
	\end{center}
	and we define $R_A$ to be the $(\ce,\cm)$-factorization of the composite $B\to A^X$ and the map $h_R\colon R_A\to R_L$ is induced by orthogonality of $\ce$ and $\cm$. It follows that we can extend $A$ to an $\mathbb L$-structure in $\cg$ by defining the interpretation of $R:X$ as $R_A$ (which is an $\ce$-quotient of $Y\in\cv_\lambda$) and setting all the other relations to be $\ce$-quotients of $0$. By construction $h$ extends to a morphism of $\mathbb L$-structures which satisfies our condition.
\end{proof}

\section{Formulas}

Given a language $\LL$, we will introduce enriched atomic formulas over $\LL$. Starting from these, we take conjunctions and existential quantifications and define the regular fragment of our theory. Then we define their interpretation into any $\LL$-structure using the given factorization system on $\cv$.

The assumptions for this section are the same as those in the previous one. Given a language $\mathbb L=\mathbb F\cup \mathbb R$ the terms we consider are the $(X,Y)$-ary extended $\mathbb F$-terms defined in \cite[Section~4]{RTe}; these include (but may not restrict to) the recursively generated ones of \cite[Definition~4.1]{RTe}.

\begin{nota}
Variables have arities which are objects of $\cv$ and we denote them as $x:X$, for $X\in\cv$; as extended terms they correspond to the identity maps between arities. Then, given an $(X,Z)$-ary term $t$ we write $t(x)$ to emphasize that $t$ has input arity $X$. If $t$ has input arity $X+Y$ we write $t(x,y)$ to specify that we have two input variables; this helps notationally for existential quantification.
\end{nota}
 
As mentioned before, for the purposes of this paper we shall not go further the regular fragment of our logic.

\begin{defi}
	Given a language $\LL$, the atomic formulas of $\LL$ are defined as follows:
	\begin{enumerate}
		\item if $s,t$ are $(X,Y)$-ary terms, then 
		$$\varphi(x):=(s(x)=t(x))$$ is an $X$-ary atomic formula;
		\item if $R$ is a $X$-ary relation symbol, $Y$ is an arity, and $t$ an $(Z,X\otimes Y)$-ary term, then 
		$$\varphi(z):= R^Y(t(z))$$ is a $Z$-ary atomic formula.
	\end{enumerate}
	General formulas are built recursively from atomic formulas by taking:
		\begin{enumerate}
		\item[(3)] {\em conjunctions}: if $\varphi_j(x)$ are $X$-ary formulas, then  
		$$\varphi(x):=\bigwedge\limits_{j\in J}\varphi_j$$ is an $X$-ary formula;
		\item[(4)] {\em existential quantification}: if $\psi(x,y)$ is an $X+Y$-ary formula, then
		$$ \varphi(x):= (\exists y)\psi(x,y) $$
		is an $X$-ary formula.
	\end{enumerate}
	
	If $\LL$ is $\lambda$-ary, the arities involved are $\lambda$-presentable, and $|J|<\kappa$, we get $\mathbb L_{\kappa\lambda}$-formulas. 
\end{defi}

\begin{defi}
	A {\em positive-primitive formula}, also called {\em pp-formula}, is one of the form
	$$ \varphi(x)\equiv (\exists y) \psi(x,y) $$
	where $\psi$ is a conjunction of atomic formulas.
\end{defi}

Next we define the interpretation of formulas in $\mathbb L$-structures. The interpretation of terms is defined in \cite{RTe}: an $(X,Y)$-ary $t$ term interprets on an $\mathbb L$-structure $A$ as a morphism $t_A\colon A^X\to A^Y$.

\begin{rem}\label{constants}
If $\cv$ is semi-cartesian (that is, the unit $I$ is terminal) we obtain $Y$-ary constant symbols as function symbols of arity $(0,Y)$. 
\end{rem}

\begin{defi}
	For any $\LL$-structure $A$ and any $X$-ary formula $\varphi(x)$ we define its interpretation in $A$ as an $\cm$-subobject
	$$ \varphi_A\rightarrowtail A^X.$$
	We argue recursively as follows:
	{\setlength{\leftmargini}{1.6em}
		\begin{enumerate}
	\item If $\varphi(x)\equiv(s(x)=t(x))$, then the interpretation $\varphi_A\rightarrowtail A^X$ is defined as the $(\ce,\cm)$-factorization of the 	the equalizer of $s_A$ and $t_A$:
	\begin{center}
		\begin{tikzpicture}[baseline=(current  bounding  box.south), scale=2]
			
			\node (a0) at (0,0) {$A_{s,t}$};
			\node (c0) at (1.8,0) {$ A^X $};
			\node (d0) at (0.9,0.5) {$\varphi_A$};
			
			\path[font=\scriptsize]
			
			(a0) edge [->] node [below] {$\tx{eq}(s_A,t_A)$} (c0)
			(a0) edge [->>] node [above] {$\ce\ \ \ $} (d0)
			(d0) edge [>->] node [above] {$\ \ \ \cm$} (c0);
		\end{tikzpicture}	
	\end{center}
	\item If $\varphi(z)\equiv R^Y(t(z))$, then the interpretation $\varphi_A\rightarrowtail A^Z$ is defined by the pullback
	\begin{center}
		\begin{tikzpicture}[baseline=(current  bounding  box.south), scale=2]
			
			\node (a0) at (0,0.8) {$\varphi_A$};
			\node (a0') at (0.2,0.6) {$\lrcorner$};
			\node (b0) at (1,0.8) {$A^Z$};
			\node (c0) at (0,0) {$R_A^Y$};
			\node (d0) at (1,0) {$A^{X\otimes Y}$};
			
			\path[font=\scriptsize]
			
			(a0) edge [>->] node [above] {} (b0)
			(a0) edge [->] node [left] {} (c0)
			(b0) edge [->] node [right] {$t_A$} (d0)
			(c0) edge [>->] node [below] {$r_A^Y$} (d0);
		\end{tikzpicture}	
	\end{center}
	where we composed $r_A^Y$ with the isomorphism $(A^X)^Y\cong A^{X\otimes Y}$  (note, the top arrow is still in $\cm$ since it is closed under powers and pullbacks in $\cv^\to$).
	\item If $ \varphi(x)\equiv\bigwedge_{j\in J}\varphi_j$, then $\varphi_A\rightarrowtail A^X$ is the intersection (wide pullback) of all the $(\varphi_j)_A\rightarrowtail A^X$ for $j\in J$. (Since $\cm$ is closed under intersections, this map is in $\cm$).
	\item If $\varphi(x)\equiv((\exists y)\psi(x,y))$, then $\varphi_A\rightarrowtail A^X$ is given by the $(\ce,\cm)$-factorization below
	\begin{center}
		\begin{tikzpicture}[baseline=(current  bounding  box.south), scale=2]
			
			\node (a0) at (-0.1,0.8) {$\psi_A$};
			\node (c0) at (1,0.8) {$ A^{X+Y} $};
			\node (c1) at (2.1,0.8) {$ A^X$};
			\node (d0) at (1,1.3) {$\varphi_A$};
			
			\path[font=\scriptsize]
			
			(a0) edge [>->] node [below] {} (c0)
			(c0) edge [->] node [below] {$p_1$} (c1)
			(a0) edge [->>] node [above] {$\ce\ \ \ $} (d0)
			(d0) edge [>->] node [above] {$\ \ \ \cm$} (c1);
		\end{tikzpicture}	
	\end{center}
	where the bottom composite is given by the $\cm$-morphism defining $\psi_A$ and the projection on the first factor.
	\end{enumerate}}
\end{defi}	

\begin{rem}\label{M-regularmon}
	If $(\ce,\cm)$ is proper then $\cm$ contains the regular monomorphisms (\cite[Proposition~2.1.4(d)]{FK}) so the interpretation of $(s(x)=t(x))$ in $A$ coincides with the equalizer of $s_A$ and $t_A$.
\end{rem}

\begin{nota}\label{equation}
	If $s$ and $t$ are respectively $(X,Z)$-ary and $(Y,Z)$-ary terms, then we denote by $(s(x)=t(y))$
	the $(X+Y)$-ary atomic formula defined by
	$$(s(x)=t(y)):=(\ s(i_X(x,y))=t( i_Y(x,y))\ )$$
	where $i_X$ and $i_Y$ are respectively the inclusions of $X$ and $Y$ in $X+Y$. It is easy to see that the interpretation $\varphi_A\rightarrowtail A^{X+Y}\cong A^X\times A^Y$ is defined by the $(\ce,\cm)$-factorization of the map
	\begin{center}
		\begin{tikzpicture}[baseline=(current  bounding  box.south), scale=2]
			
			\node (a0) at (0,0.8) {$\hat A_{s,t}$};
			\node (c0) at (1.3,0.8) {$ A^X\times A^Z $};
			
			\path[font=\scriptsize]
			
			(a0) edge [>->] node [below] {$h$} (c0);
		\end{tikzpicture}	
	\end{center}
	induced by the pullback of $s_A$ along $t_A$ (meaning that $h$ is the morphism induced into the product).
\end{nota}

\begin{lemma}\label{product} The following properties hold:
{\setlength{\leftmargini}{1.6em}
	\begin{enumerate}
		\item If $\ce$ is closed under products in $\cv^\to$, then 	for every positive-primitive formula $\varphi$
		$$
		\varphi_{\prod A_i}\cong\prod\varphi_{A_i}.
		$$
		\item If $X\in\cv$ is an $\ce$-stable object, then for every positive-primitive formula $\varphi$ 
		$$
		\varphi_{A^X}\cong(\varphi_{A})^X.
		$$
		\item If $A\cong\colim A_i$ is a $\lambda$-directed colimit of $\LL$-structures, then
		$$
		\varphi_A\cong\colim\varphi_{A_i}
		$$
		for every $\lambda$-ary positive-primitive formula $\varphi$.
	\end{enumerate}}
\end{lemma}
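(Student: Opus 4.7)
The plan is to prove all three statements by a common induction on the construction of the positive-primitive formula $\varphi$. Since a pp-formula is an existential quantification over a conjunction of atomic formulas, the recursion has four clauses: equation, relation, conjunction, and existential. In each case, $\varphi_A$ is built from powers $A^Y$, term interpretations $t_A$, relation interpretations $r_A^Y$, equalizers, pullbacks, wide intersections, and $(\ce,\cm)$-factorizations. It therefore suffices to verify that the operation in question — products, the functor $(-)^X$, or $\lambda$-filtered colimits — commutes with each of these ingredients. By Theorem~\ref{L-str}, products, powers and $\lambda$-filtered colimits in $\Str(\LL)$ are reflected by $(U,V)$, so I would carry out all verifications inside $\cv$.

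For (1), I would use that products commute with every limit (being limits themselves), and that $(-)^Y$ preserves products as a right adjoint; term and relation interpretations in a product structure are by construction the products of those in the factors. The class $\cm$ is closed under products in $\cv^\to$ because any right class of a factorization system is closed there under limits, while $\ce$ is closed under products by the stated hypothesis; hence $(\ce,\cm)$-factorizations commute with products. For (2), I would use that $(-)^X$ preserves limits (being a right adjoint), commutes with exponentiation via the canonical isomorphism $(A^Y)^X\cong (A^X)^Y$, and respects term and relation interpretations by $\cv$-functoriality; it preserves $\cm$ because $\cm$ is closed under powers in $\cv^\to$ (since $(\ce,\cm)$ is enriched) and sends $\ce$ to $\ce$ precisely when $X$ is $\ce$-stable.

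For (3), the restriction to $\lambda$-ary formulas is essential: every arity $Y$ appearing lies in $\cv_\lambda$, so $(-)^Y$ preserves $\lambda$-filtered colimits, and every conjunction is $\lambda$-small so the wide intersections used to interpret it commute with $\lambda$-filtered colimits (together with the equalizers and pullbacks, since $\lambda$-filtered colimits commute with $\lambda$-small limits in the locally $\lambda$-presentable $\cv$). Term and relation interpretations are compatible with $\lambda$-filtered colimits because $(U,V)$ preserves them (Theorem~\ref{L-str}). Finally, $\ce$ is closed under colimits in $\cv^\to$ as the left class of a factorization system, and $\cm$ is closed in $\cv^\to$ under $\lambda$-filtered colimits by Assumption~\ref{assum-0}; together these give that $(\ce,\cm)$-factorizations commute with $\lambda$-filtered colimits, completing the inductive step.

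The obstacle I expect is light but requires careful bookkeeping: pinpointing exactly where each of the three hypotheses — closure of $\ce$ under products in (1), $\ce$-stability of $X$ in (2), and closure of $\cm$ under $\lambda$-filtered colimits in (3) — enters the argument. All three are needed precisely at the existential-quantification clause, in order to guarantee that the $(\ce,\cm)$-factorization commutes with the operation under consideration; the remaining clauses reduce essentially without work to limits commuting with limits, or $\lambda$-filtered colimits commuting with $\lambda$-small limits.
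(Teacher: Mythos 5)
Your proof is correct and takes essentially the same approach as the paper, whose own proof is a one-line assertion that the claims follow from the recursive definition of interpretation together with the stability of $(\ce,\cm)$-factorizations under the limits and colimits in question; you have simply supplied the bookkeeping explicitly. One small correction to your closing remark: the hypotheses enter not only at the existential-quantification clause but also at the equation clause, since in this section the factorization system is not yet assumed proper, so the interpretation of $s(x)=t(x)$ genuinely involves an $(\ce,\cm)$-factorization of the equalizer (cf.\ Remark~\ref{M-regularmon}, which makes this factorization redundant only once properness is assumed from Section~\ref{sect:pres} onward).
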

\begin{proof}
	These follow directly from how we defined interpretation starting from atomic formulas, using that $(\ce,\cm)$-factorizations are stable under the limits and colimits considered in the statement (given the additional hypotheses).
\end{proof}

Now we turn to the notion of satisfaction.

	\begin{defi}
		Given an $\mathbb L$-structure $A$, an $X$-ary formula $\varphi(x)$, and an arrow $a\colon X\to A$ in $\cv$ (a generalized element of $A$), we say that {\em $A$ satisfies $\varphi[a]$} and write
		$$ A\models\varphi[a]  $$
		if the transpose $\hat a\colon I\to A^X$ of $a$ factors through $\varphi_A\rightarrowtail A^X$.
	\end{defi}

Then we have two possible approaches to satisfaction:
	\begin{defi}
		Given an $X$-ary formula $\varphi(x)$, we say that $A\in\Str(\LL)$ {\em satisfies $\varphi(x)$}, and write
		$$A\models \varphi,$$ 
		if $\varphi_A\rightarrowtail A^X$ is an isomorphism. We say that {\em $A$ satisfies $\varphi(x)$ pointwise} if $ A\models\varphi[a] $ for any $a\colon X\to A$.
	\end{defi}

Clearly, satisfaction implies pointwise satisfaction. For the converse,
we need that $\ce$ contains the class of surjections. Indeed, if $A$ satisfies $\varphi(x)$ pointwise, then the $\cm$-subobject $\varphi_A\rightarrowtail A^X$ is (a surjection and thus) in $\ce$, so that $\varphi_A\cong A^X$. 

We extend the satisfaction from formulas to \textit{sequents}.

\begin{defi}
Given $X$-ary formulas $\varphi$ and $\psi$, we say that $A\in\Str(\LL)$ {\em satisfies the sequent} $
(\forall x)(  \varphi(x) \vdash \psi(x)),
$ and write
		$$ A\models( \varphi\vdash \psi), $$
		if $\varphi_A\subseteq \psi_A$ as $\cm$-subobjects of $A^X$. We say that $A$ {\em satisfies the sequent pointwise} if for any $a$ such that $A\models\varphi[a]$, then 
$A\models\psi[a]$.
	\end{defi}

\begin{defi}
	Given a set $\TT$ of sequents, we denote by $\Mod(\TT)$ the full subcategory of $\Str(\LL)$ spanned by those $\LL$-structures that satisfy the sequents in $\TT$.
\end{defi}

	\begin{rem}
		Note that the sequent $(\forall x)(  \varphi(x) \vdash \psi(x))$ is not itself a formula in our sense. Moreover, an $\LL$-structure $A$ is such that $A\models \varphi$ if and only if it satisfies the sequent $\top\vdash \varphi$, where $\top$ is the empty conjunction.
	\end{rem}

	Again, satisfaction implies pointwise satisfaction. While, if $\ce$ contains the surjections, the two notions are equivalent: consider the pullback below;
	\begin{center}
		\begin{tikzpicture}[baseline=(current  bounding  box.south), scale=2]
			
			\node (a0) at (0,0.8) {$(\varphi\wedge\psi)_A$};
			\node (a0') at (0.2,0.6) {$\lrcorner$};
			\node (b0) at (1.2,0.8) {$\varphi_A$};
			\node (c0) at (0,0) {$\psi_A$};
			\node (d0) at (1.2,0) {$A^X$};
			
			\path[font=\scriptsize]
			
			(a0) edge [>->] node [above] {$f$} (b0)
			(a0) edge [>->] node [left] {} (c0)
			(b0) edge [>->] node [right] {} (d0)
			(c0) edge [>->] node [below] {} (d0);
		\end{tikzpicture}	
	\end{center}
if $A$ satisfies the sequent pointwise, then $f$ is a surjection and hence in $\ce$, but it is also in $\cm$; thus $f$ is an isomorphism and $\varphi_A\subseteq \psi_A$.

The following remark shows that not all deduction rules of ordinary regular logic apply in this context; the reason is that the class of maps $\ce$ is not stable under pullbacks in general.

\begin{rem}\label{existential-wedge}
	Not all rules of predicate logic still apply in this enriched setting. For instance, the {\em Frobenius condition} stating that the two formulas
	$$ (\exists y)(\varphi(x)\wedge\psi(x,y))\ \ \ \ \ \  \text{ and }\ \ \ \ \ \ \varphi(x)\wedge(\exists y)\psi(x,y) $$
	are equivalent, might not hold in our setting.
	Indeed, given an $X$-ary formula $\varphi$ and an $X+Y$-ary formula $\psi$, the interpretation of $(\varphi(x)\wedge\psi(x,y))$ in an $\LL$-structure $A$ is given by the pullback:
	\begin{center}
		\begin{tikzpicture}[baseline=(current  bounding  box.south), scale=2]
			
			\node (a0) at (0,0.8) {$(\varphi\wedge\psi)_A$};
			\node (a0') at (0.2,0.6) {$\lrcorner$};
			\node (b0) at (2.4,0.8) {$\varphi_A$};
			\node (c0) at (0,0) {$\psi_A$};
			\node (d0) at (1.2,0) {$A^{X+Y}$};
			\node (e0) at (2.4,0) {$A^X$};
			
			\path[font=\scriptsize]
			
			(a0) edge [->] node [above] {} (b0)
			(a0) edge [>->] node [left] {} (c0)
			(b0) edge [>->] node [right] {} (e0)
			(c0) edge [>->] node [below] {} (d0)
			(d0) edge [->] node [below] {} (e0);
		\end{tikzpicture}	
	\end{center}
	Now, there are two ways in which one can take existential quantification over $y:Y$; we could consider either
	$$ \varphi(x)\wedge(\exists y)\psi(x,y) $$
	or 
	$$ (\exists y)(\varphi(x)\wedge\psi(x,y)). $$
	Their interpretation in an $\LL$-structure $A$ is given as follows
	\begin{center}
		\begin{tikzpicture}[baseline=(current  bounding  box.south), scale=2]
			
			\node (a0) at (0,0.8) {$(\varphi\wedge\psi)_A$};
			\node (a0') at (0.2,0.6) {$\lrcorner$};
			\node (c0) at (0,0) {$\psi_A$};
			
			\node (d0) at (1.4,0) {$(\exists y \psi)_A$};
			\node (d0') at (1.6,0.6) {$\lrcorner$};
			\node (e'0) at (1.4,0.8) {$(\varphi\wedge\exists y \psi)_A$};
			\node (e''0) at (1.4,1.4) {$(\exists y (\varphi\wedge\psi))_A$};
			
			\node (b0) at (2.8,0.8) {$\varphi_A$};
			\node (e0) at (2.8,0) {$A^X$};
			
			\path[font=\scriptsize]
			
			(a0) edge [->] node [above] {} (e'0)
			(e'0) edge [>->] node [above] {} (b0)
			
			(a0) edge [bend left=20, ->>] node [above] {} (e''0)
			(e''0) edge [bend left=20,>->] node [above] {} (b0)
			(e''0) edge [dashed, >->] node [below] {} (e'0)
			
			(a0) edge [>->] node [left] {} (c0)
			(b0) edge [>->] node [right] {} (e0)
			(e'0) edge [>->] node [right] {} (d0)
			
			(c0) edge [->>] node [below] {} (d0)
			(d0) edge [>->] node [below] {} (e0);
		\end{tikzpicture}	
	\end{center}
	where $(\exists y \psi)_A$ is the $(\ce,\cm)$-factorization of the bottom row of the previous diagram, and $(\exists y (\varphi\wedge\psi))_A$ is the $(\ce,\cm)$-factorization of the top row followed by the vertical right one (which coincides with the $(\ce,\cm)$-factorization of the top row). The dashed arrow above exists by orthogonality of the factorization system.
	
	It follows that in general any $\LL$-structure $A$ satisfies the sequent
	$$  (\forall x)\ (\exists y)(\varphi(x)\wedge\psi(x,y))\ \vdash\ \varphi(x)\wedge(\exists y)\psi(x,y), $$
	but need not satisfy the other implication (see below for a concrete counterexample). This will result in a non-standard choice of the notion of regular theories in Section~\ref{regular-section}.
	However, it is easy to see that the inverse sequent 
	$$  (\forall x)\ \varphi(x)\wedge(\exists y)\psi(x,y)\ \vdash\ (\exists y)(\varphi(x)\wedge\psi(x,y)),$$
	 and hence the full Frobenius condition, holds with the additional assumption that $\ce$ is stable under pullbacks.

\end{rem}

\begin{exam}
	Let $\cv=\Met$ with the (dense, closed isometry) factorization system, where dense maps are known to not be stable under pullbacks. Consider the language $\LL$ with two relation symbols $R:1$ and $S:1+1$, so that an $\LL$-structure is a metric space $M$ together with two closed subspaces $R_M\subseteq M$ and $S_M\subseteq M\times M$. Within the notations of the previous remark we consider the formulas $\varphi(x)=R(x)$ and $\psi(x,y)=S(x,y)$.
	
	Fix then the $\LL$-structure 
	$$M:=(\mathbb R,R_M:=\{0\}, S_M:=\{(1/n,n)\}_{n>0});$$
	it is easy to see that
	$$ (\exists y (R(x)\wedge S(x,y))_M = \emptyset$$
	while
	$$ (R(x)\wedge\exists y S(x,y))_M=\{0\}\cap (\{1/n\}_{n>0}\cup\{0\})=\{0\}$$
	so that the interpretation of the two formulas is not the same.
\end{exam}

In the next example we show how to (partially) capture Iovino's languages over Banach spaces~\cite{I} into our context.

\begin{exam}\label{Iovino}
	Fix $\cv$ to be the category $\Ban$ of Banach spaces. Every enriched language $\mathbb L$ over $\Ban$ contains a $(\mathbb C^2,\mathbb C)$-ary term $+$ induced by the diagonal $\Delta:\mathbb C\to\mathbb C^2$
	(see \cite[4.1]{RTe}), as well as $(\mathbb C,\mathbb C)$-ary terms $c\cdot-$ for $c\in\mathbb C$ with $|c|\leq 1$ induced by the morphisms $c\cdot-:\mathbb C\to\mathbb C$, and the $(0,\mathbb C)$-ary term induced by $\mathbb C\to 0$. 
	
	For any $t>0$, denote by $\mathbb C_t$ the Banach space given by $\mathbb C$ with norm $t\cdot|-|\colon\mathbb C\to\mathbb R$. Then, given $B\in\Ban$ the space $B^{\mathbb C_t}$ is the same as $B$ equipped with the norm $t^{-1}\cdot \parallel\!-\!\parallel_B$. It follows that to give a morphism $B=B^\mathbb C\to B^{\mathbb C_t}$ in $\Ban$ is the same as giving an operator $T\colon B\to B$ of norm $\parallel \! T\!\parallel\ \leq t$.
	
	The languages from \cite{I} consists of sets of symbols for operators and symbols for constants, each equipped with an upper norm bound. In our setting we can see the operators as $(\mathbb C,\mathbb C_t)$-ary function symbols 
	here, $t$ is the upper bound for the norm. 
	
	Positive bounded formulas of \cite{I} include $\parallel\! x\!\parallel\ \leq r$ and $\parallel\! x\!\parallel\ \geq r$ where $r$ is a positive rational number. In our setting, we automatically have $\parallel\! x\!\parallel\ \leq r$ (by taking $x$ to be $\mathbb C$-ary) so that $\parallel\! x\!\parallel\ \leq r$ can be expressed as $\parallel\!\frac{1}{r}x\!\parallel\ \leq 1$ for $r\geq 1$. For $r<1$,
we get $\parallel\! x\!\parallel\ \leq r$ by taking $x$ to be $\mathbb C_r$-ary. Formulas $\parallel\! x\!\parallel\ \geq r$, as well as constant symbols, seem to be beyond our setting. 
	
	For positive bounded formulas, the satisfaction $\models$ from \cite{I} is our satisfaction with respect to the factorization system (strong epimorphisms, monomorphisms) while the approximate satisfaction $\models_\ca$ is our satisfaction with respect to (epimorphisms, strong monomorphisms). The fact that $\models$ is stronger than $\models_\ca$ corresponds to our \ref{depend}.
\end{exam}

\begin{nota}\label{unique}
Let $\psi(x,y)$ be an $X+Y$-ary formula. We say that an $\mathbb L$-structure $A$ satisfies $(\exists !y)\psi(x,y$) if it satisfies the formula $(\exists y)\psi(x,y)$ and the sequent
$$
(\forall x)(\forall y)(\forall y')(\psi(x,y)\wedge\psi(x,y')\ \vdash\ y=y').
$$
When considering a sequent of the form
$$
(\forall x)(  \varphi(x) \vdash (\exists !y)\psi(x,y)),
$$
we say that $A$ satisfies it if it satisfies
$$
(\forall x)(  \varphi(x) \vdash (\exists y)\psi(x,y)),
$$
and
$$
(\forall x)(\forall y)(\forall y')(  \varphi(x) \wedge \psi(x,y)\wedge\psi(x,y')\ \vdash \ y=y').
$$

\end{nota}

\begin{exam}\label{limits}
	Following Example~\ref{Sketches}, we consider $\cv=\bo{Cat}$ with the (surjective on objects, injective on objects fully faithful) factorization system. Given $C\in\bo{Cat}_\lambda$ we define a language $\mathbb L$ and formulas expressing the existence of $C$-limits in a category $A$. Since surjective on objects = surjections, satisfaction and pointwise satisfaction coincide.
	
	Similarly to~\ref{Sketches}, given $C\in\Cat_\lambda$ we define $2*C$ to be the category obtained by adding to $0*C$ a morphism $1\to 0$ with codomain the initial object of $0*C$. We have two inclusions: $j_0,j_1\colon 0*C\to 2*C$ sending $C$ to itself and picking out $0$ and $1$ respectively. Finally, define $i*C$ as $2*C$ where the added morphism is invertible, this also has two inclusions $l_0,l_1\colon 0*C\to i*C$.
	
	Now, Consider the language $\mathbb L$ with just a relation symbol $R:0*C$; for each $\mathbb L$-structure $A$, $R_A$ should be thought of as the set of limiting cones over $C$. First, let us define the formula $\psi(x,z):(0*C)+(0*C)$ as below
	$$
	\psi(x,z)\equiv \exists! (w:2*C)\ ( j_0(w)=x \wedge j_1(w)=z)
	$$
	stating the existence of a unique factorization of $z$ through $x$. Now we define sequents
	$$ 
	\alpha\equiv (\forall x:0*C)(\forall z:0*C)\  (R(x) \wedge (k(x)=k(z))) \vdash \psi(x,z),$$
	following the notation introduced above, then
	$$ \beta\equiv (\forall y:C) (\exists x:0*C)\ R(x)\wedge (k(x)=y), $$
	and
	$$ \gamma\equiv (\forall w:i*C)\ R(l_0(w))\vdash R(l_1(w)). $$
	Then $\alpha$ says that every element of $R$ is a limiting cone, $\beta$ says that every diagram from $C$ has a limiting cone in $R$, and $\gamma$ says that $R$ contains all limiting cones. Therefore, to give an element of $\Mod(\alpha,\beta,\gamma)$ is the same as to give a category $A$ where every $C\to A$ has a limit in $A$; then $R_A$ is univocally determined as the full subcategory of $A^{0*C}$ spanned by the limiting cones. A morphism of models needs to preserve such full subcategories; thus sends limiting cones to limiting cones. It follows that $\Mod(\alpha,\beta,\gamma)$ is the 2-category of small categories with $C$-limits, $C$-limit preserving functors, and natural transformations. 
\end{exam}

\begin{exam}
Let $\cv=\bo{Cat}$ and consider the functional language $\mathbb L_0$ with one $(1,2)$-ary function symbol $\rho$ where $2=\{0\to 1\}$ is the arrow category. Denote $T:=i_0\circ\rho$ and consider the equation $i_1\circ\rho=\id$ where $i_0,i_1:1\to 2$ are inclusion. Models 
of this theory $E_0$ are categories $A$ equipped with a functor $T\colon A\to A$
and a natural transformation $\rho\colon T\to \id_A$ (see \cite[Example~5.10]{RTe}).

Add $(1,2)$-ary function symbols $+,\pi_1,\pi_2$, denote $T_2:=i_0\circ +$ and add the equations $i_1\circ +=i_1\circ \pi_1=i_1\circ \pi_1=T$ and $i_0\circ \pi_1=i_0\circ \pi_2=T_2$. This yields natural transformations
$+,\pi_1,\pi_2:T_2\to T$. Using Example~\ref{limits} above, we can arrange that
\begin{center}
		\begin{tikzpicture}[baseline=(current  bounding  box.south), scale=2]
			
			\node (a0) at (0,0.8) {$T_2$};
			\node (b0) at (1,0.8) {$T$};
			\node (c0) at (0,0) {$T$};
			\node (d0) at (1,0) {$\id$};
			
			\path[font=\scriptsize]
			
			(a0) edge [->] node [above] {$\pi_1$} (b0)
			(a0) edge [->] node [left] {$\pi_2$} (c0)
			(b0) edge [->] node [right] {$\rho$} (d0)
			(c0) edge [->] node [below] {$\rho$} (d0);
		\end{tikzpicture}	
	\end{center}
is a pullback. In detail, let $C$ be the category
\begin{center}
		\begin{tikzpicture}[baseline=(current  bounding  box.south), scale=2]
			
			\node (b0) at (1,0.8) {$\cdot$};
			\node (c0) at (0,0) {$\cdot$};
			\node (d0) at (1,0) {$\cdot$};
			
			\path[font=\scriptsize]
			
			(b0) edge [->] node [right] {} (d0)
			(c0) edge [->] node [below] {} (d0);
		\end{tikzpicture}	
	\end{center}
giving the shape for pullbacks, and $\psi(x,z)$ be the formula of arity $(0*C)+(0*C)$  from Example~\ref{limits}; note that we have a term $t:=(\pi_1,\pi_2,\rho,\rho)$ of arity $(1,0*C)$ which picks out the cone of shape $C$ that we want express as a limit. Given a structure $A$, then $T_2$ is the pullback of $\rho$ along itself if and only if and only if for each $a\in A$ the cone $t(a):0*C\to A$ is a limit cone in $A$, if and only if 
$$ A\models \psi(tx,z),$$
where now $\psi(tx,z)$ has arity $1+(0*C)$.

Using the category
\begin{center}
		\begin{tikzpicture}[baseline=(current  bounding  box.south), scale=2]
			
			\node (a0) at (0.34,0.6) {$\cdot$};
			\node (b0) at (1,0.8) {$\cdot$};
			\node (c0) at (0,0) {$\cdot$};
			\node (d0) at (1,0) {$\cdot$};
			
			\path[font=\scriptsize]
			
			(a0) edge [->] node [right] {} (d0)
			(b0) edge [->] node [right] {} (d0)
			(c0) edge [->] node [below] {} (d0);
		\end{tikzpicture}	
	\end{center}
we express that $+$ is associative.	This makes $T$ a semigroup bundle; that is, a semigroup in the category of functors over $\id$. Hence models
of the resulting theory $E$ are categories equipped with a semigroup bundle over $\id$. Morphisms are functors strictly preserving the semigroup bundles. 

Similarly, we can describe categories equipped with a commutative group bundle (see \cite{R4}) and categories equipped with additive bundle (= commutative monoid bundle, \cite{CC}). In this way, we can get  tangent categories (see \cite{R4,CC}) where morphisms strictly preserve
tangent structures. But right morphisms $F$ of tangent categories seem to preserve tangent structure up to an isomorphism, i.e. $FT_1\cong T_2F$ (see \cite{CC}).
\end{exam}

\begin{rem}\label{depend}
For a functional language, $\mathbb L$-structures do not depend on the chosen factorization system $(\ce,\cm)$. However, for a general language they do. Assume that we have factorization systems $(\ce_0,\cm_0)$ and $(\ce_1,\cm_1)$ such that $\ce_0\subseteq\ce_1$, hence $\cm_1\subseteq\cm_0$. Then every $\mathbb L$-structure with respect to $\cm_1$ is an $\mathbb L$-structure with respect to $\cm_0$, giving a fully faithful inclusion
$$ \Str(\mathbb L)^1\hookrightarrow\Str(\mathbb L)^0,$$
where $1$ and $0$ denote the factorization systems used to define the $\LL$-structures. Note that this reflects (but in general does not preserve) the $\lambda$-presentable objects by~\ref{lambda-pres}.

For a conjunction $\varphi(x)$ of atomic formulas and an $\mathbb L$-structure $A$ with respect to $\cm_1$ the satisfaction $A\models\varphi$ 
does not depend on the factorization system. However, if $\varphi(x)$ is positive-primitive then satisfaction $A\models\varphi$ with respect to $\ce_0$ is stronger than that with respect to $\ce_1$.

\end{rem}

\section{Presentation formulas}\label{sect:pres}

From this section our factorization system is assumed to be proper.

\begin{assume}\label{assum}
	We fix a proper enriched factorization system $(\ce,\cm)$ on $\cv$ which is closed in $\cv^\to$ under $\lambda$-filtered colimits.
\end{assume}

In particular $\cm$ contains the regular monomorphisms, and hence the interpretation of equations does not involve taking $(\ce,\cm)$-factorizations (see Remark~\ref{M-regularmon}). This will be relevant below.

With this definition we generalize the notion of presentation formula form \cite{AR}.

\begin{defi}
	Given a $\lambda$-presentable $\mathbb L$-structure $A$, we say that an $X$-ary formula $\pi^A(x)$ of $\mathbb L_{\lambda\lambda}$ is a \textit{presentation formula} of $A$ if for every $\mathbb L$-structure $B$ we have a bijection, natural in $B$, between elements $a\colon X\to B$ for which $B\models\pi^A[a]$ and morphisms $\bar a\colon A\to B$ of $\mathbb L$-structures. 
\end{defi}

We shall now give a more categorical interpretation of this definition. Every formula $\varphi$ that is a conjunction of atomic formulas, induces a $\cv$-functor
$$ \varphi_{(-)}\colon \Str(\LL)\longrightarrow \cv$$ 
defined by sending $A$ to the interpretation $\varphi_A$ of $\varphi$. This can be constructed by taking certain limits of the forgetful $\cv$-functors $U\colon \Str(\LL)\to \cv$ and $V\colon \Str(\LL)\to \prod_\mathbb R\cm$; the limits are those involved in the definition of interpretation (which does not involve $(\ce,\cm)$ factorizations). It follows in particular that $\varphi_{(-)}$ is always continuous.

\begin{propo}
	A conjunction of atomic formulas $\varphi(x)$ is a presentation formula for $A$ if and only if $$\Str(\LL)(A,-)\cong \varphi_{(-)}.$$
\end{propo}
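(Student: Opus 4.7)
The proposition is a biconditional: one direction is essentially immediate from the definitions, while the other requires upgrading an ordinary natural bijection to a $\cv$-natural isomorphism. I would separate the two directions clearly.

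For the direction $(\Leftarrow)$: Given a $\cv$-natural isomorphism $\Str(\LL)(A,-)\cong \varphi_{(-)}$, I would simply apply $\cv_0(I,-)$ to obtain a natural bijection $\Str(\LL)_0(A,B)\cong \cv_0(I,\varphi_B)$. Since $\varphi_B\rightarrowtail B^X$ is a monomorphism, $\cv_0(I,\varphi_B)$ is exactly the set of transposes $\hat a\colon I\to B^X$ factoring through $\varphi_B$, i.e.\ precisely the set of $a\colon X\to B$ with $B\models\varphi[a]$, so $\varphi$ is a presentation formula.

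For the direction $(\Rightarrow)$: The plan is to first build an explicit $\cv$-natural transformation $\eta\colon \Str(\LL)(A,-)\to \varphi_{(-)}$ from the universal element, and then show its components are isomorphisms by Yoneda. Applying the presentation property at $B=A$, the identity $\id_A$ corresponds to some $u\colon X\to A$ with $A\models\varphi[u]$, which transposes to $\hat u\colon I\to \varphi_A$. I define
$$\eta_B\,:\,\Str(\LL)(A,B)\xrightarrow{\ \varphi_{(-)}\ }\cv(\varphi_A,\varphi_B)\xrightarrow{\ \mathrm{ev}_{\hat u}\ }\varphi_B,$$
which is $\cv$-natural in $B$. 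On global elements, naturality of the presentation bijection ensures $\eta_B$ sends $f\colon A\to B$ to the transpose of $Uf\circ u$, so $\cv_0(I,\eta_B)$ coincides with the presentation bijection and is therefore a bijection.

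To promote this to an isomorphism in $\cv$, I would use the fact that both functors preserve powers. $\Str(\LL)(A,-)$ preserves powers as an enriched hom, and $\varphi_{(-)}$ does so because it is $\cv$-continuous: under the properness in Assumption~\ref{assum} (see Remark~\ref{M-regularmon}), the interpretation of a conjunction of atomic formulas is built entirely from equalizers, pullbacks against the $\cm$-subobjects $r_A^Y$, and intersections, with no appeal to $(\ce,\cm)$-factorizations. Applying the presentation-property bijection at the power $B^Y\in\Str(\LL)$ for each $Y\in\cv$ then yields
$$\cv_0(Y,\Str(\LL)(A,B))\cong \Str(\LL)_0(A,B^Y)\cong \cv_0(I,\varphi_{B^Y})\cong \cv_0(Y,\varphi_B),$$
naturally in $Y$, and this composite is $\cv_0(Y,\eta_B)$. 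By the Yoneda lemma, $\eta_B$ is an isomorphism in $\cv$; naturality in $B$ then gives the $\cv$-natural iso $\Str(\LL)(A,-)\cong \varphi_{(-)}$. The main subtlety to watch is the power-preservation of $\varphi_{(-)}$, which is exactly why the proposition is placed after properness is imposed---without it, the equation clause would involve an $(\ce,\cm)$-factorization, which need not be preserved by powers.
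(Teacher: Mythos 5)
Your proposal is correct and follows essentially the same route as the paper's proof: both reduce the statement to the bijection on global elements and then upgrade it to an isomorphism in $\cv$ using the fact that $\Str(\LL)(A,-)$ and $\varphi_{(-)}$ preserve powers (the latter because, under properness, the interpretation of a conjunction of atomic formulas is built from limits alone, with no $(\ce,\cm)$-factorizations), concluding by Yoneda. Your explicit construction of the $\cv$-natural transformation $\eta$ from the universal element $\hat u\colon I\to\varphi_A$ is a presentational refinement of the paper's chain of equivalences, and if anything handles the $\cv$-naturality of the comparison map more carefully, but it is not a genuinely different argument.
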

\begin{proof}
	By definition of satisfaction, $\varphi$ is a presentation for $A$ if and only if
	$$ \Str(\LL)_0(A,-)\cong \cv_0(I,(\varphi_{(-)})_0). $$
	But $\Str(\LL)_0(A,-)$ is by definition $\cv_0(I,\Str(\LL)(A,-)_0)$, and both $\Str(\LL)(A,-)$ and $\varphi_{(-)}$ preserve powers (being continuous); thus the existence of the natural isomorphism above is equivalent to having
	$$ \cv_0(X,\Str(\LL)(A,-)_0)\cong \cv_0(X,(\varphi_{(-)})_0) $$
	naturally in $X\in\cv_0$. And this is in turn equivalent to $\Str(\LL)(A,-)\cong \varphi_{(-)}$.
\end{proof}

\begin{coro}\label{conj->pres}
	Let $\varphi$ be a $\lambda$-ary conjunction of atomic formulas in a language $\mathbb L$. Then $\varphi$ is a presentation formula of some $\lambda$-presentable object of $\Str(\LL)$.
\end{coro}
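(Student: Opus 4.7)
The plan is to combine the proposition immediately preceding the corollary (which characterizes presentation formulas as representability data for $\varphi_{(-)}$) with the enriched representability theorem for $\cv$-functors out of a locally $\lambda$-presentable $\cv$-category. So the goal reduces to exhibiting a $\lambda$-presentable $A\in\Str(\LL)$ together with an isomorphism $\Str(\LL)(A,-)\cong \varphi_{(-)}$, and for this it suffices to show that $\varphi_{(-)}\colon\Str(\LL)\to\cv$ is continuous and preserves $\lambda$-filtered colimits.

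Continuity is already granted by the paragraph preceding the proposition: since $\varphi$ is a conjunction of atomic formulas, its interpretation involves no $(\ce,\cm)$-factorization, so $\varphi_{(-)}$ can be written as an iterated limit built from the continuous $\cv$-functors $U\colon\Str(\LL)\to\cv$ and $V\colon\Str(\LL)\to\prod_\RR\cm$ (using pullbacks along powers and intersections of $\cm$-subobjects). For preservation of $\lambda$-filtered colimits, I would invoke Lemma~\ref{product}(3): a conjunction of atomic formulas is a (degenerate) positive-primitive formula, and if $\varphi$ is $\lambda$-ary then so is each of its atomic constituents and their conjunction indexing set, so the lemma directly gives $\varphi_{\colim A_i}\cong\colim\varphi_{A_i}$ for $\lambda$-filtered diagrams.

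With both properties in hand, I would apply the enriched special adjoint functor theorem / representability criterion: a $\cv$-functor $F\colon\ck\to\cv$ on a locally $\lambda$-presentable $\cv$-category $\ck$ is representable by a $\lambda$-presentable object if and only if it preserves small weighted limits and $\lambda$-filtered colimits. Since $\Str(\LL)$ is locally $\lambda$-presentable by Theorem~\ref{L-str}, this yields a $\lambda$-presentable $A\in\Str(\LL)$ with $\Str(\LL)(A,-)\cong\varphi_{(-)}$. By the previous proposition, $\varphi$ is then a presentation formula of $A$, completing the proof.

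The main thing to verify carefully is step two, i.e.\ that Lemma~\ref{product}(3) does cover an arbitrary $\lambda$-ary conjunction of atomic formulas (not merely one with an outer existential quantifier), and that the arities of all the terms and relation symbols appearing in $\varphi$ stay in $\cv_\lambda$ so that the relevant $\lambda$-filtered colimit interchange is legitimate; this is the only point where the $\lambda$-ary hypothesis on $\varphi$ is really used. Everything else is a direct appeal to previously established results.
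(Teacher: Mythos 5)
Your proposal is correct and follows essentially the same route as the paper: both reduce, via the preceding proposition, to showing that $\varphi_{(-)}\colon\Str(\LL)\to\cv$ is continuous and preserves $\lambda$-filtered colimits, and then extract a $\lambda$-presentable representing object from the enriched adjoint functor theorem. The only cosmetic difference is that you justify the filtered-colimit step by citing Lemma~\ref{product}(3) (viewing the conjunction as a degenerate pp-formula with trivial quantification over $y:0$), whereas the paper observes directly that $\varphi_{(-)}$ is built from $\lambda$-small limits of the $\lambda$-filtered-colimit-preserving functors $U$ and $V$; both justifications are valid.
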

\begin{proof}
	The $\cv$-functor $ \varphi_{(-)}\colon \Str(\LL)\longrightarrow \cv$ is continuous and preserves $\lambda$-filtered colimits since it is defined by taking $\lambda$-small limits of the forgetful $\cv$-functors $U$ and $V$. Thus $\varphi_{(-)}$ has a left adjoint, and is therefore a representable $\cv$-functor. It follows that the representing object $A$ is $\lambda$-presentable and, by the proposition above, $\varphi$ is a presentation formula for $A$. 
\end{proof}

\begin{lemma}\label{strong-pres}
Let $\mathbb L=\mathbb F\cup\mathbb R$ be a language whose functional part $\mathbb F$ has $\ce$-stable input arities. Then:\begin{enumerate}
		\item for any $g\colon A\to B$ in $\Str(\mathbb L)$, the $(\ce,\cm)$ factorization of the underlying morphism $Ug$ in $\cv$ lifts to a factorization $(e,m)$ of $g$ in $\Str(\mathbb L)$;
		\item $U\colon\Str(\LL)\to\cv$ sends regular epimorphisms to maps in $\ce$. 
	\end{enumerate}
\end{lemma}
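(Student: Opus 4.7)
For part (1), my approach is to take the $(\ce,\cm)$-factorization $Ug = m\circ e$ of the underlying map in $\cv$ with $e\colon UA\twoheadrightarrow C$ in $\ce$ and $m\colon C\rightarrowtail UB$ in $\cm$, and then equip $C$ with an $\LL$-structure, defined symbol by symbol, so that $e$ and $m$ become homomorphisms. The idea is to use the orthogonality of $(\ce,\cm)$ to obtain both the interpretations and the homomorphism conditions on the nose. For a function symbol $f\colon(X,Y)$ in $\FF$, consider the square
\begin{center}
\begin{tikzpicture}[baseline=(current  bounding  box.south), scale=2]
\node (a0) at (0,0.8) {$A^X$};
\node (b0) at (1.4,0.8) {$C^X$};
\node (c0) at (0,0) {$C^Y$};
\node (d0) at (1.4,0) {$B^Y$};
\path[font=\scriptsize]
(a0) edge [->>] node [above] {$e^X$} (b0)
(a0) edge [->] node [left] {$e^Y\circ f_A$} (c0)
(b0) edge [->] node [right] {$f_B\circ m^X$} (d0)
(c0) edge [>->] node [below] {$m^Y$} (d0);
\end{tikzpicture}
\end{center}
which commutes because $g$ is a homomorphism. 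Since $X$ is $\ce$-stable we have $e^X\in\ce$, and $m^Y\in\cm$ since $\cm$ is closed under powers. Hence orthogonality produces a unique diagonal $f_C\colon C^X\to C^Y$ making both triangles commute; this is our interpretation of $f$ on $C$, and by construction $e$ and $m$ commute with $f$.

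For a relation symbol $R\colon X$ in $\RR$, I would define $r_C\colon R_C\rightarrowtail C^X$ as the $(\ce,\cm)$-factorization
\[
R_A \stackrel{e_R}{\twoheadrightarrow} R_C \stackrel{r_C}{\rightarrowtail} C^X
\]
of the composite $e^X\circ r_A$. The map $m_R\colon R_C\to R_B$ then arises from orthogonality applied to the commutative outer square with top $g_R\colon R_A\to R_B$ and bottom $m^X\circ r_C\colon R_C\to B^X$, using that $e_R\in\ce$ and $r_B\in\cm$. A brief check (cancelling the monomorphisms $r_B$ and $r_C$) shows $g_R=m_R\circ e_R$, so $g = m\circ e$ in $\Str(\LL)$ as required.

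For part (2), the plan is to combine (1) with the observation that $\cm$ consists of monomorphisms (by properness) and that $U$ reflects monomorphisms: indeed, by Theorem~\ref{L-str} the forgetful $U$ preserves limits and is conservative, so it preserves and reflects kernel pairs and hence monomorphisms. Given a regular epimorphism $g\colon A\to B$ in $\Str(\LL)$, factor it via part (1) as $g=m\circ e$ with $Ue\in\ce$ and $Um\in\cm$. Then $m$ is a monomorphism in $\Str(\LL)$; but in any category a regular epimorphism that factors through a monomorphism forces that monomorphism to be an isomorphism (standard orthogonality of regular epis and monos). Thus $m$ is invertible and $Ug$, being isomorphic to $Ue$, lies in $\ce$.

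The main obstacle I expect is the bookkeeping in part (1): one must verify not only that each $f_C$ and $R_C$ is well defined but also that together they make $e$ and $m$ into genuine homomorphisms whose composite recovers $g$ on every symbol, and this relies crucially on the $\ce$-stability hypothesis on input arities (to get $e^X\in\ce$) and on $\cm$ being closed under powers (to get $m^Y\in\cm$). Part (2) is then a short consequence.
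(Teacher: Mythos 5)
Part (1) of your proposal is correct and is essentially the paper's argument. The paper handles the function symbols by citing \cite[Lemma~B.5(1)]{RTe}, whose content is exactly your diagonal fill-in: $e^X\in\ce$ by $\ce$-stability of the input arity and $m^Y\in\cm$ since the factorization system is enriched. For a relation symbol $R$ the paper proceeds in a slightly different order: it takes the $(\ce,\cm)$-factorization of $g_R\colon R_A\to R_B$, obtains $r_E\colon R_E\to E^X$ by orthogonality against $m^X\in\cm$, and then invokes the cancellation property of $\cm$ (\cite[Proposition~2.1.4]{FK}) to see $r_E\in\cm$; you instead factor $e^X\circ r_A$ directly and induce $m_R$ by orthogonality against $r_B\in\cm$. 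By uniqueness of $(\ce,\cm)$-factorizations the two constructions agree, and your variant even avoids the appeal to cancellation, since the homomorphism condition on $m$ only requires $m_R$ to exist, not to lie in $\cm$.

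In part (2) your conclusion and overall shape (a regular epimorphism is extremal, so the mono part of the lifted factorization is invertible) match the paper, but your justification that $U$ reflects monomorphisms contains a genuine error: Theorem~\ref{L-str} asserts that the \emph{pair} $(U,V)$ is conservative, not $U$ alone, and $U$ alone is not conservative. For instance, take $\cv=\Set$ with a single relation symbol $R\colon 1$; the identity map on a one-point set, viewed as a morphism from the structure with $R_A=\emptyset$ to the structure with $R_B=1$, has invertible underlying map but is not an isomorphism of $\LL$-structures. The repair is immediate: by Remark~\ref{unique-restriction} a morphism of $\LL$-structures is uniquely determined by its underlying $\cv$-morphism, so $U$ is faithful, and faithful functors reflect monomorphisms; since $Um\in\cm$ is a monomorphism (properness), $m$ is a monomorphism in $\Str(\LL)$, and your extremality argument then concludes exactly as in the paper.
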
 
\begin{proof}
	$(1)$. Consider a morphism $g\colon A\to B$ in $\Str(\mathbb L)$, and the $(\ce,\cm)$ factorization $(e\colon A\to E, m\colon E\to B)$  of $Ug$ in $\cv$. By \cite[Lemma~B.5.(1)]{RTe} we know that $E$ inherits a unique notion of $\mathbb F$-structure making $e$ and $m$ morphisms in $\Str( \mathbb F)$; thus we are left to prove that $E$ also inherits a compatible notion of $\mathbb R$-structure.
	
	For any $X$-ary relation symbol $R$ in $\mathbb L$, we can consider the solid part of the diagram below in $\cv$.
	\begin{center}
		\begin{tikzpicture}[baseline=(current  bounding  box.south), scale=2]
			
			\node (a) at (-1,-0.8) {$A^X$};
			\node (a0) at (0,-0.8) {$E^X$};
			\node (b0) at (1,-0.8) {$B^X$};
			
			\node (b) at (-1,0) {$R_A$};
			\node (c0) at (0,0) {$R_E$};
			\node (d0) at (1,0) {$R_B$};
			
			\path[font=\scriptsize]
			
			(a) edge [->] node [above] {$e^X$} (a0)
			(b) edge [dashed, ->>] node [above] {$e_R$} (c0)
			
			(a0) edge [>->] node [above] {$m^Y$} (b0)
			(c0) edge [dashed, >->] node [above] {$m_R$} (d0)
			
			(a) edge [<-<] node [left] {$r_A$} (b)
			(a0) edge [dashed, <-<] node [left] {$r_E$} (c0)
			(b0) edge [<-<] node [right] {$r_B$} (d0)
			
			(b) edge [bend left,->] node [above] {$g_R$} (d0);
		\end{tikzpicture}	
	\end{center}
	Then we define $R_E$, $e_R$, and $m_R$ as the $(\ce,\cm)$-factorization of $g_R$, and $r_E$ as the morphism induced by orthogonality. Following \cite[Proposition~2.1.4]{FK}, $r_E$ is in $\cm$. This endows $E$ with an $\mathbb L$-structure that by construction makes $e$ and $m$ morphisms in $\Str(\LL)$.  
	
	$(2)$. Given a regular epimorphism $g\colon A\to B$ in $\Str(\mathbb L)$ we can consider the factorization $g=me$ as above. But $m$ is a monomorphism and $g$ is in particular an extremal epimorphism; thus $m$ is an isomorphism and $g$ is therefore sent to a map in $\ce$.
\end{proof}

In the proposition below we say that a set of objects $\cp$ of $\cv_\lambda$ is an {\em $\ce$-generator} for $\cv_\lambda$ if for any $X\in\cv_\lambda$ there exists $Y\in\cp$ and a map $e\colon Y\to X$ in $\ce$. 

\begin{lemma}\label{coeq}
	Assume that $\cv_\lambda$ has an $\ce$-generator of $\ce$-projective objects, and let $\mathbb F$ be a functional language with $\ce$-stable input arities. Then every $\lambda$-presentable $\mathbb F$-structure $A$ can be presented as a coequalizer
	$$ 
	\xymatrix@=4pc{
		&  
		FZ'\ar@<0.5ex>[r]^{f}
		\ar@<-0.5ex>[r]_{g}& FZ  \ar[r]^{h} & A
	}
	$$
	of morphisms between free algebras over $\lambda$-presentable objects $Z'$ and $Z$ of $\cv$.
\end{lemma}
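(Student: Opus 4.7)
Our strategy is to combine the fact (from Theorem~\ref{L-str}) that $U\colon\Str(\FF)\to\cv$ is a conservative right adjoint preserving $\lambda$-filtered colimits with standard enriched category theory, to write any $\lambda$-presentable $\FF$-structure as a $\lambda$-small weighted colimit of free algebras, and then to unpack this weighted colimit as a coequalizer.

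First I would record that the family $\{FY:Y\in\cv_\lambda\}$ (where $F\dashv U$) is a strong generator of $\Str(\FF)$ consisting of $\lambda$-presentable objects. Indeed, since $U$ preserves $\lambda$-filtered colimits, $\Str(\FF)(FY,-)\cong\cv(Y,U-)$ preserves $\lambda$-filtered colimits whenever $Y\in\cv_\lambda$, so each $FY$ is $\lambda$-presentable; strong generation follows by combining the conservativity of $U$ (Theorem~\ref{L-str}(4)) with the fact that $\cv_\lambda$ is a strong generator of $\cv$, via the adjunction $\cv(Y,U-)\cong\Str(\FF)(FY,-)$. By the standard description of $\lambda$-presentable objects in a locally $\lambda$-presentable $\cv$-category in terms of any such generator (cf.\ \cite{K,Bird}), the $\lambda$-presentable object $A$ can be written as a $\lambda$-small weighted colimit
\[
A\;\cong\;W\star D
\]
for a $\lambda$-small $\cv$-category $\mathcal{J}$, a weight $W\colon\mathcal{J}^{\mathrm{op}}\to\cv$ valued in $\cv_\lambda$, and a diagram $D\colon\mathcal{J}\to\Str(\FF)$ with $Dj=FY_j$ for some $Y_j\in\cv_\lambda$.

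Next I would unpack this weighted colimit as a coequalizer. The canonical coend presentation of $W\star D$ gives, in $\Str(\FF)$, a coequalizer
\[
\coprod_{j_0,j_1}\mathcal{J}(j_0,j_1)\otimes Wj_1\cdot FY_{j_0}\;\rightrightarrows\;\coprod_{j}Wj\cdot FY_j\;\longrightarrow\;A.
\]
Because $F$ is a $\cv$-left adjoint it preserves both copowers and coproducts, so both outer objects are themselves free algebras. Setting
\[
Z:=\coprod_{j}Wj\otimes Y_j,\qquad Z':=\coprod_{j_0,j_1}\mathcal{J}(j_0,j_1)\otimes Wj_1\otimes Y_{j_0},
\]
one has $\coprod_j Wj\cdot FY_j\cong FZ$ and $\coprod_{j_0,j_1}\mathcal{J}(j_0,j_1)\otimes Wj_1\cdot FY_{j_0}\cong FZ'$. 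Both $Z$ and $Z'$ are $\lambda$-small iterated colimits of tensor products of objects in $\cv_\lambda$; since $\cv$ is locally $\lambda$-presentable \emph{as a closed category}, $\cv_\lambda$ is closed under $\otimes$ as well as under $\lambda$-small colimits, whence $Z,Z'\in\cv_\lambda$. This yields the desired coequalizer $FZ'\rightrightarrows FZ\to A$.

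\textbf{Main obstacle.} The genuine input is the enriched fact that every $\lambda$-presentable object of a locally $\lambda$-presentable $\cv$-category is a $\lambda$-small weighted colimit of any strong generator of $\lambda$-presentables, applied here to $\{FY:Y\in\cv_\lambda\}$. Once this is in hand, the reduction of a weighted colimit of free algebras to a coequalizer of free algebras on $\lambda$-presentable arities is mechanical, relying only on preservation of copowers and coproducts by the left adjoint $F$ and on the closure of $\cv_\lambda$ under $\otimes$ and $\lambda$-small colimits.
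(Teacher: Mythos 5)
There is a genuine gap, and it sits exactly at the step you flag as the ``genuine input''. It is not a standard fact that a $\lambda$-presentable object of a locally $\lambda$-presentable $\cv$-category is a \emph{single} $\lambda$-small weighted colimit $W\star D$ of objects from a strong generator of $\lambda$-presentables, and neither \cite{K} nor \cite{Bird} asserts this. What is available (and what the paper itself invokes, via \cite[Theorem~7.2]{Kel82}) is that the $\lambda$-presentables lie in the \emph{iterated} closure of the generator under $\lambda$-small weighted colimits; unwinding the proof, the best one-step statement is that $A$ is a \emph{retract} of such a colimit. Flattening the iteration --- or even just absorbing the retract, whose splitting is a coequalizer $e,1\colon C\rightrightarrows C$ with $C$ a $\lambda$-small colimit of frees but $e$ a map that need not come from a map of diagrams of free algebras --- requires lifting morphisms between non-free colimits back to the level of free algebras. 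That lifting is precisely a projectivity statement, and it is exactly where the two hypotheses of the lemma enter: your argument never uses that $\cv_\lambda$ has an $\ce$-generator of $\ce$-projective objects, nor that the input arities of $\mathbb F$ are $\ce$-stable, which is a strong signal that the claimed one-step presentation has not actually been justified. (In the ordinary $\Set$-based case the one-step claim does hold, but only because free algebras on finite sets are projective with respect to surjections --- the very property that fails over a general $\cv$.)

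For comparison, the paper argues in the direction opposite to yours: it takes $\cg$ to be the class of structures admitting the stated coequalizer presentation, observes that $\cg$ contains the free algebras on $\cv_\lambda$ and hence is a strong generator consisting of $\lambda$-presentables, and then shows $\cg$ is closed under $\lambda$-small weighted colimits, so that $\cg\supseteq\Str(\FF)_\lambda$ by \cite[Theorem~7.2]{Kel82}. Closure under $\lambda$-small coproducts and copowers is easy, for the same reason your identification $\coprod_j Wj\cdot FY_j\cong FZ$ works; the entire content is the coequalizer step. There, given $f,g\colon A\to B$ in $\cg$ with presentation $q\colon FW\twoheadrightarrow B$, one covers $A$ by an epimorphism $r\colon FZ\to A$ with $Z\in\cv_\lambda$ $\ce$-projective (using the $\ce$-generator hypothesis) and lifts $fr,gr$ along $q$ using that $Uq\in\ce$, which is Lemma~\ref{strong-pres} and is where $\ce$-stability of the input arities is used. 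Your final reduction (coend formula, preservation of copowers and coproducts by $F$, closure of $\cv_\lambda$ under $\otimes$ and $\lambda$-small colimits) is mechanical and correct as far as it goes, but the proof cannot be completed along your lines without first proving the one-step colimit claim, and any proof of that claim will have to re-introduce the projectivity argument that your proposal omits.
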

\begin{proof}	
Let $\cg$ be the full subcategory of $\Str(\mathbb F)$ spanned by these objects. We need to prove that it consists of all the $\lambda$-presentable objects. Clearly $\cg\subseteq \Str(\mathbb F)_\lambda$ and is a strong generator of $\Str(\mathbb F)$, since it contains the free $\lambda$-presentable objects. Thanks to \cite[Theorem~7.2]{Kel82}, to conclude it is enough to prove that $\cg$ is closed under $\lambda$-small weighted colimits.
	
	Since $\lambda$-small coproducts and copowers of free $\lambda$-presentable objects are still free, it is easy to see that $\cg$ is closed under these colimits. Thus we only need to consider coequalizers. Consider a pair $f,g\colon A\to B$ in $\cg$, then we can consider the solid part of the diagram below
	\begin{center}
		\begin{tikzpicture}[baseline=(current  bounding  box.south), scale=2]

			\node (a) at (0,0) {$FZ$};
			\node (a1) at (1,0) {$A$};
			\node (b) at (0,-0.8) {$FW$};
			\node (b1) at (1,-0.8) {$B$};
			\node (z) at (-1.2,-0.8) {$FW'$};
			
			\path[font=\scriptsize]
			
			(a) edge [->>] node [above] {$r$} (a1)
			(b) edge [->>] node [below] {$q$} (b1)
			
			([yshift=-1.5pt]z.east) edge [->] node [below] {$h$} ([yshift=-1.5pt]b.west)
			([yshift=1.5pt]z.east) edge [->] node [above] {$k$} ([yshift=1.5pt]b.west)
			([xshift=-1.5pt]b.north) edge [dashed, <-] node [left] {$f'$} ([xshift=-1.5pt]a.south)
			([xshift=1.5pt]b.north) edge [dashed, <-] node [right] {$g'$} ([xshift=1.5pt]a.south)
			([xshift=-1.5pt]b1.north) edge [<-] node [left] {$f$} ([xshift=-1.5pt]a1.south)
			([xshift=1.5pt]b1.north) edge [<-] node [right] {$g$} ([xshift=1.5pt]a1.south);
		\end{tikzpicture}
	\end{center}
	where the pair $(h,k)$ presents $B$ as a coequalizer of free $\lambda$-presentable objects, while $Z$ is $\lambda$-presentable $\ce$-projective and $r$ an epimorphism (this exists by hypothesis since $A\in\cg$ and the $\ce$-projectives are an $\ce$-generator in $\cv_\lambda$). Now, by the lemma above, $Uq\in\ce$; therefore using the $\ce$-projectivity of $Z$ we find $f',g'\colon FZ\to FW$ making the square above commute. To conclude it is enough to notice that the coequalizer of the pair
	\begin{center}
		\begin{tikzpicture}[baseline=(current  bounding  box.south), scale=2]

			\node (b) at (0,-0.8) {$FW$};
			\node (z) at (-1.5,-0.8) {$F(W'+Z)$};
			
			\path[font=\scriptsize]
			
			([yshift=-1.5pt]z.east) edge [->] node [below] {$(h,f')$} ([yshift=-1.5pt]b.west)
			([yshift=1.5pt]z.east) edge [->] node [above] {$(k,g')$} ([yshift=1.5pt]b.west);
		\end{tikzpicture}
	\end{center}
	coincides with the coequalizer of $(f,g)$. 
\end{proof}

\begin{propo}\label{pres1}
	Assume that $\cv_\lambda$ has an $\ce$-generator of $\ce$-projective objects. Let $\mathbb L= \mathbb F\cup \mathbb R$ be a $\lambda$-ary language whose function symbols have $\ce$-stable input arities and relation symbols have $\ce$-stable arities. Then every $\lambda$-presentable $\mathbb L$-structure $A$ has a presentation formula in $\LL_{\lambda\lambda}$.
\end{propo}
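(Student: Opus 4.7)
The plan is to build $\pi^A$ as a single equation presenting the functional part of $A$, conjoined with one relational atomic formula per relation symbol whose interpretation $R_A$ is not an $\ce$-quotient of $0$. By Proposition~\ref{lambda-pres}, $J_{\mathbb F}(A)$ is $\lambda$-presentable in $\Str(\mathbb F)$, each $R_A\rightarrowtail A^X$ is an $\ce$-quotient of some $Y\in\cv_\lambda$, and all but a $\lambda$-small collection of the $R_A$ are $\ce$-quotients of $0$. Applying Lemma~\ref{coeq} to $J_{\mathbb F}(A)$ produces a coequalizer $f,g\colon FZ'\rightrightarrows FZ\twoheadrightarrow A$ with $Z,Z'\in\cv_\lambda$; under the adjunction $F\dashv U$ the pair $(f,g)$ corresponds to a pair of $(Z,Z')$-ary terms $s,t$, and since $(\ce,\cm)$ is proper the interpretation of $(s(x)=t(x))$ in any $\mathbb F$-structure $B$ is just the equalizer of $s_B,t_B$ (Remark~\ref{M-regularmon}). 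This makes $(s(x)=t(x))$ a presentation formula for $J_{\mathbb F}(A)$ as an $\mathbb F$-structure.

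For each relation symbol $R:X$ in $\mathbb L$ I would use the $\ce$-generator of $\ce$-projectives hypothesis to upgrade the $\ce$-quotient guaranteed by Proposition~\ref{lambda-pres} into a map $q_R\colon Y_R\twoheadrightarrow R_A$ with $Y_R\in\cv_\lambda$ $\ce$-projective. The composite $Y_R\twoheadrightarrow R_A\rightarrowtail A^X$ transposes under $(-\otimes X)\dashv(-)^X$ to a map $\tilde y_R\colon X\otimes Y_R\to UA$. Since $X$ is $\ce$-stable and $Y_R$ is $\ce$-projective, the natural isomorphism $\cv_0(X\otimes Y_R,-)\cong \cv_0(Y_R,(-)^X)$ shows that $X\otimes Y_R$ is itself $\ce$-projective; meanwhile Lemma~\ref{strong-pres}(2) forces the underlying map $UFZ\twoheadrightarrow UA$ of the coequalizer to lie in $\ce$. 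Combining these two facts lifts $\tilde y_R$ to a map $X\otimes Y_R\to UFZ$, which transposes under $F\dashv U$ to a $(Z,X\otimes Y_R)$-ary term $t_R$; note that $X\otimes Y_R\in\cv_\lambda$ because $\cv_\lambda$ is closed under $\otimes$. I then define
$$\pi^A(x):=(s(x)=t(x))\ \wedge\ \bigwedge_{R\,:\,Y_R\ne 0} R^{Y_R}(t_R(x)),$$
and since the conjunction ranges over a $\lambda$-small set of symbols, $\pi^A\in\mathbb L_{\lambda\lambda}$.

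The final step is to verify, naturally in $B\in\Str(\mathbb L)$, the bijection between elements $a\colon Z\to B$ with $B\models\pi^A[a]$ and morphisms $A\to B$ of $\mathbb L$-structures. Satisfaction of $(s(x)=t(x))$ at $a$ produces a unique $\mathbb F$-morphism $\bar a\colon J_{\mathbb F}(A)\to B$. For each $R$ with $Y_R\ne 0$, unwinding the pullback defining the interpretation of $R^{Y_R}(t_R(x))$ together with the construction of $t_R$ shows that $B\models R^{Y_R}(t_R(x))[a]$ is equivalent to the composite $Y_R\twoheadrightarrow R_A\rightarrowtail A^X\xrightarrow{\bar a^X} B^X$ factoring through $r_B\colon R_B\rightarrowtail B^X$. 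Orthogonality of $q_R\in\ce$ against $r_B\in\cm$ then upgrades this to a factorization of $R_A\to B^X$ through $R_B$, which is exactly the condition that $\bar a$ extends to an $\mathbb L$-morphism. Relations with $Y_R=0$ require no clause in $\pi^A$: the composite $0\twoheadrightarrow R_A\to B^X$ factors trivially through $R_B$, and the same orthogonality argument applies. The converse direction and naturality in $B$ follow immediately by precomposition with the coequalizer quotient $FZ\twoheadrightarrow A$.

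I expect the main obstacle to be the construction of the terms $t_R$, since this is the only point where all three hypotheses of the proposition must cooperate at once: $\ce$-stability of the relation arity $X$ and $\ce$-projectivity of $Y_R$ combine to make $X\otimes Y_R$ $\ce$-projective, while $\ce$-stability of the function input arities feeds into Lemma~\ref{strong-pres} to deliver the $\ce$-quotient $UFZ\twoheadrightarrow UA$ through which we must lift. Once the $t_R$ are in hand, the remaining verification is a matter of unpacking the interpretation of atomic formulas and applying orthogonality.
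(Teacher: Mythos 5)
Your proposal is correct and follows essentially the same route as the paper's proof: the coequalizer presentation of $J_{\mathbb F}(A)$ from Lemma~\ref{coeq}, the presentation formula as an equation conjoined with $\lambda$-small many atomic formulas $R^{Y_R}(t_R(x))$, and the final verification via orthogonality of $q_R\in\ce$ against $r_B\in\cm$ (with the $Y_R=0$ relations handled automatically) all match the paper. The only, immaterial, difference is that you build the terms $t_R$ by lifting the transposed map $X\otimes Y_R\to UA$ through $Uh\in\ce$ using $\ce$-projectivity of $X\otimes Y_R$, whereas the paper lifts $Y_t\to A^{X_t}$ through $(Uh)^{X_t}\in\ce$ before transposing --- the same step performed on the other side of the adjunction, using exactly the same hypotheses.
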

\begin{proof}
	Given a $\lambda$-presentable $\mathbb L$-structure $A$, we will construct its presentation formula. Since the algebraic reduct $J_{\mathbb F}(A)$ is a $\lambda$-presentable, it is a coequalizer
	$$ 
	\xymatrix@=4pc{
		&  
		FZ'\ar@<0.5ex>[r]^{f}
		\ar@<-0.5ex>[r]_{g}& FZ  \ar[r]^{h} & J_{\mathbb F}(A)
	}
	$$
	of morphisms $f$ and $g$ between free algebras over $\lambda$-presentable objects $Z'$ and $Z$ of $\cv$ (see \ref{coeq}). Since $f$ and $g$ are $(Z,Z')$-ary terms, $A$ satisfies the equation $f=g$. 
	
	Now, by \ref{lambda-pres} there are a $\lambda$-small number of $X_t$-ary relations $R_t$ such that $(R_t)_A$ is an $\ce$-quotient $q_t:Y_t\twoheadrightarrow (R_t)_A$ of some $0\neq Y_t\in \cv_\lambda$. Since by hypothesis $\cv_\lambda$ has an $\ce$-generator of $\ce$-projective objects, we can assume that each $Y_t$ is $\ce$-projective. It follows that for each $t$ we have the following diagram
	\begin{center}
		\begin{tikzpicture}[baseline=(current  bounding  box.south), scale=2]

			\node (d0) at (1,0) {$UFZ^{X_t}$};
			
			\node (a) at (-1,-0.8) {$Y_t$};
			\node (a0) at (0,-0.8) {$(R_t)_A$};
			\node (b0) at (1,-0.8) {$A^{X_t}$};
			
			\path[font=\scriptsize]
			
			(a) edge [->>] node [below] {$q_t$} (a0)
			(a0) edge [>->] node [below] {$(r_t)_A$} (b0)
			(b0) edge [<<-] node [right] {$U(h)^{X_t}$} (d0)
			
			(a) edge [dashed, ->] node [above] {$\tau_t'$} (d0);
		\end{tikzpicture}	
	\end{center}
	in $\cv$, where $\tau_t'$ exists since $U(h)^{X_t}$ is in $\ce$ (by \ref{strong-pres} and since $X_t$ is $\ce$-stable) and $Y_t$ is $\ce$-projective. By transposition $\tau_t'$ corresponds to a map $\tau_t''\colon F(X_t\otimes Y_t)\to FZ$, and hence to a $(Z,X_t\otimes Y_t)$-ary term $\tau_t$. Now for each $t$, consider the $Z$-ary atomic formula $R_t^{Y_t}(\tau_t)$. We shall show that the formula
	$$ \pi^A:=(f=g) \wedge \bigwedge\limits_{t} R_t^{Y_t}(\tau_t) $$
	is a presentation formula for $A$. We need to show that for every $\mathbb L$-structure $B$ we have a bijection, natural in $B$, between elements $a\colon Z\to B$ for which $B\models\pi^A[a]$ and morphisms $\bar a\colon A\to B$ of $\mathbb L$-structures.
	
	Notice first that $(f=g)_B$ is by definition the equalizer of $B^f,B^g\colon B^Z\to B^{Z'}$ and this, since $B^{(-)}\cong \mathbb F\tx{-Str}(F-,J_\mathbb FB)$, is isomorphic to $\mathbb F\tx{-Str}(J_\mathbb FA,J_\mathbb FB)$. Thus we have a natural bijection between elements $a\colon Z\to B$ in $\cv$ for which $B\models(f=g)[a]$ and morphisms of $\mathbb F$-structures $\bar a\colon J_\mathbb FA\to J_\mathbb FB$. To conclude it is enough to show that for such an $a\colon Z\to B$, we have $B\models R_t^{Y_t}(\tau_t)[a]$ for all $t$ if and only if $\bar a$ is actually a morphism of $\mathbb L$-structures.
	
	Given $a\colon Z\to B$, denote by $a'\colon I\to B^Z$ its transpose; by naturality, we know that the transpose of the morphism
	$$ I\xrightarrow{\ a'\ } B^Z\xrightarrow{\ B^\tau} B^{X_t\otimes Y_t} $$
	with respect to $Y_t$, is the composite
	$$ x_t\colon Y_t\xrightarrow{\ q_t\ } (R_t)_A\xrightarrow{\ (r_t)_A\ } A^{X_t}\xrightarrow{\ \bar a^{X_t}} B^{X_t}. $$ 
	Now, since $q_t$ is in $\ce$, the map of $\mathbb F$-structure $\bar a$ extends to a map of $\mathbb L$-structures if and only if $x_t\colon Y_t\to B^{X_t}$ above factors through $(r_t)_B\colon (R_t)_B\rightarrowtail B^{X_t}$ for any $t$ (the morphism $(R_t)_A\to (R_t)_B$ is induced by the orthogonality property). 
	
	On the other hand, the interpretation of $R_t^{Y_t}(\tau_t)$ in $B$ is given by the pullback 
	\begin{center}
		\begin{tikzpicture}[baseline=(current  bounding  box.south), scale=2]
			
			\node (a0) at (0,0.9) {$R_t^{Y_t}(\tau_t)_B$};
			\node (a0') at (0.3,0.7) {$\lrcorner$};
			\node (b0) at (1.4,0.9) {$B^Z$};
			\node (c0) at (0,0) {$(R_t)_B^{Y_t}$};
			\node (d0) at (1.4,0) {$B^{X_t\otimes Y_t}$};
			
			\path[font=\scriptsize]
			
			(a0) edge [>->] node [above] {} (b0)
			(a0) edge [->] node [left] {} (c0)
			(b0) edge [->] node [right] {$B^\tau$} (d0)
			(c0) edge [>->] node [below] {$(r_t)_B^{Y_t}$} (d0);
		\end{tikzpicture}	
	\end{center} 
	where we have identified $B^{X_t\otimes Y_t}$ with $(B^{X_t})^{Y_t}$. Thus $a\colon Z\to B$ is such that $B\models R_t^{Y_t}(\tau_t)[a]$ if and only if $B^\tau\circ a'\colon I\to B^{X_t\otimes Y_t}$ factors through $(r_t)_B^{Y_t}$. By transposing with respect to $Y_t$, that holds if and only if  $x_t\colon Y_t\to B^{X_t}$ factors through $(r_t)_B\colon (R_t)_B\rightarrowtail B^{X_t}$. By the argument above this holds for any $t$ if and only if $\bar a$ extends to a map of $\mathbb L$-structures, concluding the proof.
\end{proof}

\begin{propo}\label{pres2}
	Let $\mathbb L=(\emptyset,\mathbb R)$ be a $\lambda$-ary relational language. Then every $\lambda$-presentable $\mathbb L$-structure $A$ has a presentation formula in $\LL_{\lambda\lambda}$.
\end{propo}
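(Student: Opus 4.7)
The plan is to adapt the strategy of Proposition~\ref{pres1}, simplifying it considerably: since the language is purely relational there are no function symbols to track via equations, and the arity of the presentation formula can be chosen to be $A$ itself, so the coequalizer-of-free-algebras step becomes unnecessary and there is no need for $\ce$-projectives or $\ce$-stability assumptions.

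First I would invoke Proposition~\ref{lambda-pres}. Since $\mathbb F=\emptyset$, its condition (i) reduces to $A\in\cv_\lambda$, so $A$ is an admissible $\lambda$-presentable arity for $\LL_{\lambda\lambda}$-formulas. Condition (ii) gives, for each relation symbol $R:X_R$ in $\mathbb L$, an $\ce$-quotient $q_R\colon Y_R\twoheadrightarrow R_A$ with $Y_R\in\cv_\lambda$, and condition (iii) says that all but a $\lambda$-small set $S$ of relation symbols have $Y_R=0$.

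For each $R\in S$, I transpose the composite $r_A\circ q_R\colon Y_R\to A^{X_R}$ to obtain a morphism $\tilde r_R\colon X_R\otimes Y_R\to A$ in $\cv$, which is exactly an $(A,X_R\otimes Y_R)$-ary term (for the empty functional language, $(Z,W)$-ary terms are just morphisms $W\to Z$ in $\cv$). Then $R^{Y_R}(\tilde r_R(x))$ is an $A$-ary atomic formula, and I set
$$\pi^A(x)\ :=\ \bigwedge_{R\in S} R^{Y_R}(\tilde r_R(x)),$$
which is a $\lambda$-small conjunction of $\lambda$-ary atomic formulas and hence belongs to $\LL_{\lambda\lambda}$.

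The remaining, and only substantial, task is to verify that $\pi^A$ is a presentation formula. Given any $\LL$-structure $B$ and any $a\colon A\to B$ in $\cv$, unwinding the pullback definition of the interpretation of $R^{Y_R}(\tilde r_R(x))$ in $B$ (and transposing along $Y_R$) shows that $B\models R^{Y_R}(\tilde r_R(x))[a]$ exactly when the composite $Y_R\xrightarrow{q_R} R_A\xrightarrow{r_A} A^{X_R}\xrightarrow{a^{X_R}} B^{X_R}$ factors through $r_B\colon R_B\rightarrowtail B^{X_R}$. Since $q_R\in\ce$ is an epimorphism and $r_B\in\cm$ is a monomorphism, orthogonality of the factorization system makes this condition equivalent to the existence of a (necessarily unique) lift $a_R\colon R_A\to R_B$ with $r_B a_R=a^{X_R}r_A$, i.e.\ to $a$ respecting the relation $R$ in the sense of Definition~\ref{structure}. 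For relations $R\notin S$, the map $q_R\colon 0\to R_A$ lies in $\ce$ and a lift $R_A\to R_B$ against $r_B$ exists automatically by orthogonality with $0\to R_B$, so those components impose no further constraint and may be omitted from the conjunction. Thus elements $a\colon A\to B$ with $B\models\pi^A[a]$ correspond naturally to morphisms $\bar a\colon A\to B$ in $\Str(\LL)$, as required. I expect no serious obstacle: the only delicate point is making the orthogonality-versus-satisfaction correspondence precise, and the absence of equations means one does not have to combine it with the coequalizer argument used in Proposition~\ref{pres1}.
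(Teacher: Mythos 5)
Your proof is correct and matches the paper's own argument, which simply specializes the proof of Proposition~\ref{pres1} by taking $f=g=1_A$ (so the coequalizer step is trivial and no $\ce$-projectivity or $\ce$-stability is needed) and $\tau'_t=(r_t)_A q_t$ -- exactly the terms you obtain by transposing $r_A\circ q_R$. Your explicit orthogonality verification, including the observation that relations which are $\ce$-quotients of $0$ impose no constraint, is just a spelled-out version of the same argument from Proposition~\ref{pres1}.
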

\begin{proof}
	Since the functional part of the language is trivial, by Proposition~\ref{lambda-pres}, an $\LL$-structure $A$ is $\lambda$-presentable if and only if $A$ is $\lambda$-presentable as an object of $\cv$ and there are a $\lambda$-small number of $X_t$-ary relations $R_t$ such that $(R_t)_A$ is an $\ce$-quotient $q_t:Y_t\twoheadrightarrow (R_t)_A$ of some $0\neq Y_t\in \cv_\lambda$.
	
	Thus we a can argue exactly as in Proposition~\ref{pres1} above by taking $f=g=1_A$ and $\tau'_t=(r_t)_A q_t$.
\end{proof}



\begin{exams}\label{examples-presentation}$ $
	{\setlength{\leftmargini}{1.6em}
		\begin{enumerate}
			\item The category $\Met$ with (surjective, isometry) factorization 
			sa\-tis\-fies \ref{assum}. For $\lambda>\omega$, discrete $\lambda$-presentable objects form a $Surj$-generator because $\delta_X:X_0\to X$ are surjective. Since $Surj$ is closed under discrete powers, $e^X$ is a surjection provided that $X$ is discrete and $e$ is a surjection. Hence discrete objects are $Surj$-stable. Therefore \ref{pres1} applies to $\Met$. 
			\item Similarly, \ref{pres1} applies to the category $\Pos$ with the (surjective, embedding) factorization. 
			\item The category $\Ban$ of Banach spaces and linear maps of norm $\leq 1$ with the (strong epimorphisms, monomorphism) factorization sa\-tis\-fies \ref{assum}. Discrete Banach spaces are coproducts of $\mathbb C$. Since $\Ban$ is reflective in the monadic category of totally convex spaces (see \cite{PR}), every Banach space is a regular quotient of a discrete space ($l_1$ is the left adjoint to the forgetful functor from totally convex spaces to sets). Thus discrete $\lambda$-presentable objects for a (strong epimorphism)-generator. 
			Hence \ref{pres1} applies to $\Ban$.  
			\item If $\cv$ is a symmetric monoidal quasivariety as in~\cite{LT20}, then $\cv$ is locally finitely presentable, and we can consider the (regular epi, mono) factorization system. The regular projective objects form a regular generator, hence we can apply~\ref{pres1}.  
		\end{enumerate}
	}
\end{exams}

\begin{rem}\label{depend1}
Assume that we have factorization systems $(\ce_0,\cm_0)$ and $(\ce_1,\cm_1)$ such that $\ce_0\subseteq\ce_1$, so that we have an inclusion 
$$ J \colon \Str(\mathbb L)^1\hookrightarrow\Str(\mathbb L)^0,$$
as in~\ref{depend}. If every $\mathbb L^0$-structure has a presentation formula with respect to $\ce_0$ then every $\LL^1$-structure has it with respect to $\ce_1$ as
well. Indeed, it follows by \ref{depend}, that given $A\in\Str(\LL)^0$, if $\varphi$ is a presentation formula for the $\mathbb L^1$-structure $JA$, then it is also a presentation formula for $A$. However, since $J$ may not preserve the $\lambda$-presentable objects, if $A$ is a $\lambda$-presentable $\LL^0$-structure, the corresponding presentation formula may not be $\lambda$-ary (but just $\mu$-ary, where $\mu$ is such that $JA$ is $\mu$-presentable). Note that there are always big enough $\mu$ for which $J$ preserves $\mu$-presentable objects \cite[Theorem~2.19]{AR}.
\end{rem}

The remark above allows us to consider presentation formulas in $\Met$, $\CMet$ and $\Ban$ for
the factorization system $(\ce_1,\cm_1)$=(dense, isometry). Indeed, one takes $(\ce_0,\cm_0)=(Surj,Inj)$, and considers the presentation formulas obtained in that setting by virtue of~\ref{pres1} and \ref{pres2} (using that discrete objects are $\ce_0$-projective and $\ce_0$-stable); by the remark above, these are also presentation formulas with respect to the (dense, isometry) factorization system.

We conclude this section with the following lemma, which will be useful in Section~\ref{purity-section}. 

\begin{lemma}\label{present-morphism}
	In the setting of ~\ref{pres1} or \ref{pres2}, let $h\colon A\to B$ be a morphism between $\lambda$-presentable $\mathbb L$-structures, then there exist formulas $\pi^A(x)$, $\pi^B(y)$, and $\chi(x)$ in $\LL_{\lambda\lambda}$ such that:\begin{enumerate}
		\item $\pi^A$ and $\pi^B$ are presentation formulas for $A$ and $B$ respectively;
		\item $\chi(x)$ is a positive-primitive formula of the from 
		$$ (\exists y)(\pi^A(x)\ \wedge\ \pi^B(y)\ \wedge\ \tau(y)=\eps(x));$$
		\item every $\mathbb L$-structure $K$ satisfies the sequent
		$$ (\forall x) (\chi(x) \vdash \pi^A(x)); $$
		\item for any $\mathbb L$-structure $K$ the $(\ce,\cm)$ factorization of $\Str(\mathbb L)(h,K)$ is given by 
		$$ \Str(\mathbb L)(B,K)\cong \pi^B_K\xrightarrow{\ \ce\ }  \chi_K\xrightarrow{\ \cm\ } \pi^A_K\cong \Str(\mathbb L)(A,K) $$
		where the map in $\cm$ is induced by (3), and the map in $\ce$ is necessarily unique. 
	\end{enumerate}
	The same holds in the setting of~\ref{depend1} for structures and formulas defined with respect to the factorization system $(\ce_1,\cm_1)$, assuming that $(\ce_0,\cm_0)$ satisfies the hypotheses of ~\ref{pres1} or \ref{pres2} and that $J$ preserves the $\lambda$-presentable objects.
\end{lemma}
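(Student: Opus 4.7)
The plan is to build $\pi^A$, $\pi^B$, $\tau$, and $\epsilon$ from the explicit constructions in the proofs of Propositions~\ref{pres1} and~\ref{pres2} together with the datum of $h$. First I take presentation formulas $\pi^A(x)$ of arity $Z$ and $\pi^B(y)$ of arity $W$ provided by \ref{pres1} (or \ref{pres2}). In those constructions $J_\mathbb F A$ is presented as a coequalizer $FZ' \rightrightarrows FZ \twoheadrightarrow J_\mathbb F A$ via Lemma~\ref{coeq}, where $Z\in\cv_\lambda$ may be assumed $\ce$-projective by the $\ce$-generator hypothesis; write $\alpha\colon FZ\twoheadrightarrow J_\mathbb F A$ and analogously $\beta\colon FW\twoheadrightarrow J_\mathbb F B$.

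Next, to produce $\tau$, I lift $J_\mathbb F(h)\circ\alpha\colon FZ\to J_\mathbb F B$ through $\beta$ in $\Str(\mathbb F)$. By Lemma~\ref{strong-pres}(2), $U\beta$ lies in $\ce$; since $Z$ is $\ce$-projective and $F\dashv U$, such a lift $\bar h\colon FZ\to FW$ with $\beta\bar h=J_\mathbb F(h)\alpha$ exists, and by the Yoneda-type correspondence between morphisms of free algebras and extended terms it determines a $(W,Z)$-ary $\mathbb F$-term $\tau$ whose interpretation $\tau_K\colon K^W\to K^Z$ is precomposition with $\bar h$. Let $\epsilon$ be the identity $(Z,Z)$-ary term and define
$$ \chi(x):=(\exists y)(\pi^A(x)\wedge\pi^B(y)\wedge\tau(y)=\epsilon(x)). $$
Claim~(3) is automatic since $\pi^A(x)$ appears as a conjunct beneath the existential.

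For~(4), fix $K\in\Str(\LL)$ and let $\psi_K\rightarrowtail K^{Z+W}$ be the interpretation of $\pi^A(x)\wedge\pi^B(y)\wedge\tau(y)=\epsilon(x)$. The projection $\psi_K\to\pi^B_K$ is an isomorphism: given $y\in\pi^B_K$ corresponding to $g\colon B\to K$, the element $\tau_K(y)$ corresponds via the presentation isomorphism to $g\circ h\colon A\to K$ (because $\tau$ is precomposition with $\bar h$ and $\beta\bar h=J_\mathbb F(h)\alpha$), which lies in $\pi^A_K$ and trivially satisfies the equation. Under this identification the composite $\psi_K\to K^{Z+W}\to K^Z$ factors through $\pi^A_K$ and becomes $\Str(\LL)(h,K)\colon\pi^B_K\to\pi^A_K$. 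Consequently the $(\ce,\cm)$-factorization defining $\chi_K$ coincides with that of $\Str(\LL)(h,K)$, yielding~(4).

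The main obstacle is the lifting in Step~2, which relies on $\ce$-projectivity of $Z$; this is exactly the extra hypothesis already present in \ref{pres1}, so no new assumption is needed. In the purely relational case~\ref{pres2} one may simply take $\bar h=h$ and this issue disappears. For the extension of~\ref{depend1}, perform the construction with respect to the coarser system $(\ce_0,\cm_0)$: by Remark~\ref{depend1} the resulting formulas still present the $\LL^1$-structures, and since the interpretation of conjunctions of atomic formulas is insensitive to the factorization system, the required factorization in $(\ce_1,\cm_1)$ is recovered by restricting to $\cm_1$-subobjects.
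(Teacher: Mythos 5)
There is a genuine gap at the very first step: you assume that the coequalizer presentation $FZ'\rightrightarrows FZ\twoheadrightarrow J_{\mathbb F}A$ from Lemma~\ref{coeq} ``may be assumed'' to have $\ce$-projective vertex $Z$. Lemma~\ref{coeq} asserts no such thing, and it is false in general. The $\ce$-generator hypothesis only provides an $\ce$-map $e\colon X\to Z$ with $X$ $\ce$-projective; replacing $FZ$ by $FX$ yields the composite $FX\to FZ\twoheadrightarrow J_{\mathbb F}A$, which is an epimorphism (as $e$ is epi by properness and $F$ is a left adjoint) but not a regular epimorphism, so it does not give a new coequalizer presentation. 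Concretely, in $\Met$ with the (surjective, isometry) system, the $\ce$-projectives are essentially the discrete spaces, and coequalizers of maps between discrete spaces are again discrete; a two-point space at distance $1$ therefore admits no presentation with $\ce$-projective vertex. Without $\ce$-projectivity of $Z$, your Step~2 collapses: the lift $\bar h\colon FZ\to FW$ through $\beta$ requires exactly that $\cv_0(Z,U\beta)$ be surjective, i.e.\ that $Z$ be $\ce$-projective, and no such $\bar h$ need exist. Your fallback to the relational case (where $\bar h=h$ works) is fine, but the functional case is precisely where the lemma has content.

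This is exactly the difficulty the paper's proof is engineered around: it keeps the non-projective vertex $Z_A$, chooses an auxiliary $\ce$-projective $X$ with $e\colon X\to Z_A$ in $\ce$, lifts $h\,h_A F(e)$ (not $h\,h_A$ itself) through $h_B$ using projectivity of $FX$ and Lemma~\ref{strong-pres}, and takes $\eps$ to be the term corresponding to $Fe$ rather than the identity. This has a downstream cost that your simplified version hides: one must then check that the projection $\chi'_K\to\pi^B_K$ is still invertible, which uses properness ($e$ epi, hence $K^e$ mono, hence the retraction $s$ is monic and so an isomorphism). A smaller point: your verification that $\psi_K\to\pi^B_K$ is an isomorphism argues with global elements ($y\in\pi^B_K$ corresponding to $g\colon B\to K$), which in the enriched setting should be replaced by the diagrammatic/generalized-element argument via the pullback squares, as in the paper; that part is repairable, but the projectivity assumption is not, and repairing it essentially forces you back to the paper's construction with a nontrivial $\eps$.
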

\begin{proof}
	Assume first that $\LL$ has non empty functional part (so that we are in the setting of~\ref{pres1}). Let
	$$ 
	\xymatrix@=4pc{
		&  
		FZ_A'\ar@<0.5ex>[r]^{f_A}
		\ar@<-0.5ex>[r]_{g_A}& FZ_A  \ar[r]^{h_A} & J_{\mathbb F}(A)
	}
	$$
	and
	$$ 
	\xymatrix@=4pc{
		&  
		FZB'\ar@<0.5ex>[r]^{fB}
		\ar@<-0.5ex>[r]_{g_B}& FZ_B  \ar[r]^{h_B} & J_{\mathbb F}(B)
	}
	$$
	be the coequalizers from the proof of \ref{pres1}. There is an $\ce$-projective object $X$ and a morphism $e:X\to Z_A$ in $\ce$. Then $FX$
	is projective with respect to $h_B$ (since $Uh_B$ is in $\ce$ by \ref{strong-pres}). Thus, there is $t:FX\to FZ_B$ such that $h_Bt=hh_AF(e)$. Let
	$\tau$ be the $(Z_B,X)$-ary term corresponding to $t$ and $\eps$ be the
	$(Z_A,X)$-ary term corresponding to $Fe$. Then we take $\pi^A$ and $\pi^B$ as in \ref{pres1} and define 
	$$\chi'(x,y):=(\pi^A(x)\ \wedge\ \pi^B(y)\ \wedge\ (\tau(y)=\eps(x))\ ).$$
	and $\chi(x):= (\exists y)\chi'(x,y)$; here, $x$ is a $Z_A$-sorted variable, $y$ is a $Z_B$-sorted variable and the equation $\tau(y)=\eps(x)$ is in the sense of~\ref{equation}. 
	
	When the functional part of $\LL$ is empty (and we are in the setting of~\ref{pres2}) we define $\chi$ in the same way, noting that by \ref{pres2} the chosen coequalizers are trivial now, so we do not need to consider an $\ce$-projective object covering $Z_A=A$ (hence $e=1_{A}$ and $t=h$).
	
	By construction for each $K$ we have $\Str(\mathbb L)(A,K)\cong \pi^A_K$ and $\Str(\mathbb L)(B,K)\cong \pi^B_K$. Consider the solid part of the diagram below, where the squares (I) and (II) are pullbacks by definition, and $\chi''(x,y):= \pi^B(y) \wedge (\tau(y)=\epsilon(x))$.
	\begin{center}
		\begin{tikzpicture}[baseline=(current  bounding  box.south), scale=2]
			
			\node (a0) at (-0.3,2.1) {$\Str(\mathbb L)(B,K)\cong\pi^B_K$};
			\node (a'0) at (2,2.1) {$\pi^B_K$};
			\node (c0) at (-0.3,-0.8) {$\Str(\mathbb L)(A,K)\cong\pi^A_K$};
			
			\node (b0) at (3.6,2.1) {$K^{Z_B}$};
			\node (d0) at (3.6,0.9) {$K^{X}$};
			\node (e0) at (2.5,-0.1) {$K^{Z_A}$};
			
			\node (f0) at (2.5,1.1) {$(\tau=\epsilon)_K$};
			\node (g0) at (1,1.1) {$\chi''_K$};
			\node (g'0) at (-0.3,1.1) {$\chi'_K$};
			
			\node (h0) at (0.8,-0.1) {$\chi_K$};
			
			\node (l0) at (3.1,0.85) {(I)};
			\node (m0) at (2.3,1.6) {(II)};
			
			\path[font=\scriptsize]
			
			(a0) edge [->] node [above] {$\id_{\pi^B_K}$} (a'0)
			(a'0) edge [>->] node [above] {} (b0)
			(a0) edge [bend right=40,->] node [left] {$\pi^h_K:=\Str(\mathbb L)(h,K)$} (c0)
			(b0) edge [->] node [right] {$\tau_K$} (d0)
			(e0) edge [->] node [below] {$\ \ \ \ \ \ \ \quad K^e=\eps_K$} (d0)
			(c0) edge [bend right=20, >->] node [above] {$m$} (e0)
			
			(f0) edge [->] node [above] {} (b0)
			(f0) edge [->] node [above] {} (e0)
			
			(g0) edge [>->] node [above] {} (f0)
			(g0) edge [->] node [left] {} (a'0)	
			
			(g'0) edge [->>] node [right] {$q$} (h0)
			(g'0) edge [>->] node [right] {} (g0)
			(g'0) edge [->] node [right] {} (c0)
			(g'0) edge [->] node [right] {$s$} (a0)
			(h0) edge [>->] node [above] {$n$} (e0)
			
			(c0) edge [dashed, <-<] node [above] {$g$} (h0);
		\end{tikzpicture}	
	\end{center} 
	Note that, commutativity of the outer square, plus the fact that (I) is a pullback, implies that the composite $\pi^h_Km$ factors through $(\tau=\epsilon)_K\to K^{Z_A}$ to give a map $r'\colon \pi^B_K\to (\tau=\epsilon)_K$. Similarly, $r'$ factors through $\chi''_K$ first, and then through $\chi'_K$ providing a morphism $r\colon \pi^B_K\to \chi'_K$ for which $ nqr=m\pi^h_K $ and $sr=\id$. Since $e\in\ce$ is (in particular) an epimorphism, then $K^e$ is a monomorphism, and hence so is $s$ (being obtained by pulling back $K^e$ and composing with a map in $\cm$). It follows that $s$ is an isomorphism with inverse given by $r$. Now, by orthogonality of the factorization system there exists $g\colon \chi_K\to \pi^A_K$ in $\cm$ making the relevant triangles commute. This concludes the proof implying both points (3) and (4).
	
	For the last statement of the lemma in the context of~\ref{depend1}, one considers the formulas $\pi^A(x)$, $\pi^B(y)$, and $\chi(x)$ to be those obtained by applying the result above with respect to the factorization system $(\ce_0,\cm_0)$. By~\ref{depend1} these still satisfy (1) and (2). Then points (3) and (4) can be proved as above.
\end{proof}

\section{Elementary morphisms and purity}\label{purity-section}

In this section we study the connection between the notion of pure morphism introduced in \cite{RTe1} and the purely model theoretic notion of elementary morphism (Definition~\ref{elementary}). 

For this purpose presentation formulas will be essential, therefore we make the assumption below. While this might seem a very restrictive set of conditions, it is worth pointing out that many examples arise just by considering the empty language over $\cv$ or just some relational language (which are always allowed under \ref{assumption} below). We leave it to future work to understand whether these assumptions can be relaxed. 

\begin{assume}\label{assumption} 
	We fix a proper enriched factorization system $(\ce_0,\cm_0)$ on $\cv$ for which $\cm_0$ is closed in $\cv^\to$ under $\lambda$-filtered colimits, and we assume that either of the following two conditions holds:\begin{itemize}
		\item $\LL=\RR$ is a $\lambda$-ary relational language;
		\item $\mathbb L=$ is a $\lambda$-ary language whose function symbols have $\ce_0$-stable input arities and relation symbols have $\ce_0$-stable arities.  Moreover $\cv_\lambda$ has an $\ce_0$-generator of $\ce_0$-projective objects.
	\end{itemize}
For the reminder of this section we fix an enriched factorization system $(\ce,\cm)$ as in~\ref{assum}, such that $\ce_0\subseteq\ce$ and for which the inclusion $ J \colon \Str(\mathbb L)\hookrightarrow\Str(\mathbb L)^0$ preserves $\lambda$-presentable objects. Structures and formulas will be considered with respect to the factorization system $(\ce,\cm)$.
\end{assume}

This assumption holds for any $\cv$ whenever the language is purely relational (one takes $(\ce_0,\cm_0)=(\ce,\cm)$). The second condition is valid whenever $\cv$ is endowed with the $(Surj,Inj)$ factorization system by considering discrete arities (also in this case the two factorization system coincide); or when $\cv$ is a symmetric monoidal quasivariety with the (regular epi, mono) factorization system and $\LL$ has regular-projective arities (again, the two factorization system coincide).
Finally, the second condition is valid in $\Met$ and $\Ban$ with $(\ce,\cm)$=(dense, isometry) for languages whose arities are discrete (here we choose $(\ce_0,\cm_0)=(Surj,Inj)$ for $\Met$ and $(\ce_0,\cm_0)=$(strong epi, mono) for $\Ban$). 

\begin{rem}
	Following Remark~\ref{depend1} (based on Propositions~\ref{pres1} and~\ref{pres2}), every $\lambda$-presentable $\mathbb L$-structure $A$ has a presentation formula in $\LL_{\lambda\lambda}$. This is the main reason why Assumption~\ref{assumption} needed to be made.
\end{rem}

We proceed by introducing a notion that generalizes that of elementary embeddings from the ordinary context. See also \cite[Definition~A.5]{RTe1}.

\begin{defi}\label{elementary}
	Let $f\colon K\to L$ be a morphism in $\Str(\mathbb L)$. We say that $f$ is {\em ele\-men\-tary with respect to a positive-primitive formula $\psi(x)$} if the induced diagram below
	\begin{center}
		\begin{tikzpicture}[baseline=(current  bounding  box.south), scale=2]
			
			\node (a0) at (0,0.8) {$\psi_K$};
			\node (a0') at (0.2,0.6) {$\lrcorner$};
			\node (b0) at (1,0.8) {$\psi_L$};
			\node (c0) at (0,0) {$K^X$};
			\node (d0) at (1,0) {$L^X$};
			
			\path[font=\scriptsize]
			
			(a0) edge [->] node [above] {} (b0)
			(a0) edge [>->] node [left] {} (c0)
			(b0) edge [>->] node [right] {} (d0)
			(c0) edge [->] node [below] {$f^X$} (d0);
		\end{tikzpicture}	
	\end{center}
	is a pullback. The morphism $f$ is called {\em $\lambda$-elementary} if it is elementary with respect to each positive-primitive formula in $\LL_{\lambda\lambda}$.
\end{defi}

The following proposition holds without Assumption~\ref{assumption} (just Assumption~\ref{assum-0} is enough); however~\ref{assumption} will be important for the main result of the section (Proposition~\ref{pure})

\begin{propo}\label{closure-elementary}
	Let $f\colon K\to L$ be $\lambda$-elementary. If $L$ satisfies a sequent of the form
	$$(\forall x)(  \varphi(x) \vdash \psi(x)),$$
	where $\varphi$ and $\psi$ are positive-primitive formulas in $\mathbb L_{\lambda\lambda}$, then $K$ also satisfies the same sequent.
\end{propo}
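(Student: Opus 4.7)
The plan is to use the elementary property for both $\varphi$ and $\psi$ in turn, and then transport the inclusion $\varphi_L \subseteq \psi_L$ back to $K^X$ via pullback along $f^X$.

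First, since $f$ is $\lambda$-elementary and both $\varphi$ and $\psi$ are positive-primitive $\mathbb L_{\lambda\lambda}$-formulas, the defining squares give pullbacks
\begin{center}
\begin{tikzpicture}[baseline=(current bounding box.south), scale=1.8]
\node (a0) at (0,0.8) {$\varphi_K$};
\node (a0') at (0.2,0.6) {$\lrcorner$};
\node (b0) at (1,0.8) {$\varphi_L$};
\node (c0) at (0,0) {$K^X$};
\node (d0) at (1,0) {$L^X$};
\node (e0) at (2,0.8) {$\psi_K$};
\node (e0') at (2.2,0.6) {$\lrcorner$};
\node (f0) at (3,0.8) {$\psi_L$};
\node (g0) at (2,0) {$K^X$};
\node (h0) at (3,0) {$L^X.$};
\path[font=\scriptsize]
(a0) edge [->] node [above] {} (b0)
(a0) edge [>->] node [left] {} (c0)
(b0) edge [>->] node [right] {} (d0)
(c0) edge [->] node [below] {$f^X$} (d0)
(e0) edge [->] node [above] {} (f0)
(e0) edge [>->] node [left] {} (g0)
(f0) edge [>->] node [right] {} (h0)
(g0) edge [->] node [below] {$f^X$} (h0);
\end{tikzpicture}
\end{center}
The hypothesis $L \models (\varphi \vdash \psi)$ means that we have a $\cm$-morphism $i\colon \varphi_L \rightarrowtail \psi_L$ over $L^X$.

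Next, I would exhibit the desired inclusion $\varphi_K \subseteq \psi_K$ in $K^X$ by composing and using the universal property of the pullback defining $\psi_K$. Indeed, consider the composite $\varphi_K \to \varphi_L \xrightarrow{i} \psi_L$ together with the $\cm$-inclusion $\varphi_K \rightarrowtail K^X$. These two maps agree when composed into $L^X$ (by commutativity of the left pullback square and the fact that $i$ lies over $L^X$), so by the pullback property on the right they induce a unique morphism $j\colon \varphi_K \to \psi_K$ whose composition with $\psi_K \rightarrowtail K^X$ is the original inclusion $\varphi_K \rightarrowtail K^X$. Since $\psi_K \rightarrowtail K^X$ is in $\cm$ and hence a monomorphism, this $j$ is itself a monomorphism; moreover, since $\cm$ is stable under pullbacks, $j$ is actually in $\cm$. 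Hence $\varphi_K \subseteq \psi_K$ as $\cm$-subobjects of $K^X$, which is precisely the statement $K \models (\varphi \vdash \psi)$.

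No step here presents a serious obstacle: the entire argument is a diagram chase exploiting that the pullback of a $\cm$-subobject along any morphism is again a $\cm$-subobject. The only thing worth highlighting is that we do not need any additional hypothesis on how $\varphi$ or $\psi$ decompose as existential quantifications over conjunctions of atomic formulas; the elementarity pullback squares immediately turn containment over $L$ into containment over $K$.
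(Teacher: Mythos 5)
Your proof is correct and takes essentially the same approach as the paper's: both arguments use the two pullback squares provided by $\lambda$-elementarity of $f$ with respect to $\varphi$ and $\psi$, together with the map $\varphi_L\to\psi_L$ coming from $L\models(\varphi\vdash\psi)$, and then invoke the universal property of the pullback defining $\psi_K$ to induce the factorization $\varphi_K\to\psi_K$ over $K^X$. Your extra observation that this induced map lies in $\cm$ (it is, by pasting of pullbacks, the pullback of $\varphi_L\rightarrowtail\psi_L$ along $\psi_K\to\psi_L$, and $\cm$ is pullback-stable) is a harmless refinement that the paper leaves implicit, since the containment of $\cm$-subobjects only requires the factorization through $\psi_K\rightarrowtail K^X$.
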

\begin{proof}
	Consider the solid part of the diagram below
	\begin{center}
		\begin{tikzpicture}[baseline=(current  bounding  box.south), scale=2, on top/.style={preaction={draw=white,-,line width=#1}}, on top/.default=6pt]
			
			\node (a0) at (-0.4,0.75) {$\varphi_K$};
			\node (b0) at (0.6,0.75) {$\varphi_L$};
			
			\node (a'0) at (0.4,1.2) {$\psi_K$};
			\node (b'0) at (1.4,1.2) {$\psi_L$};
			
			\node (c0) at (0,0) {$K^X$};
			\node (d0) at (1,0) {$L^X$};
			
			\path[font=\scriptsize]

			(a'0) edge [->] node [above] {} (b'0)
			(a'0) edge [>->] node [left] {} (c0)
			(b'0) edge [>->] node [right] {} (d0)
			
			(a0) edge [->, on top] node [above] {} (b0)
			(a0) edge [>->] node [left] {} (c0)
			(b0) edge [>->] node [right] {} (d0)
			
			(b0) edge [>->] node [right] {} (b'0)
			(a0) edge [dashed, >->] node [right] {} (a'0)
			
			(c0) edge [->] node [below] {$f^X$} (d0);
		\end{tikzpicture}	
	\end{center}
	where the two vertical squares are pullbacks since $f$ is elementary with respect to $\varphi$ and $\psi$, and the arrow $\varphi_L\to\psi_L$ is induced by the fact that $L$ satisfies the sequent. By the universal property of pullbacks, the dashed arrow above exists, showing that also $K$ satisfies the sequent.
\end{proof}

Ordinarily, the $\lambda$-elementary morphisms of $\LL$-structures can be characterized as those morphisms $g\colon K\to L$ that are {\em $\lambda$-pure} (see \cite{AR}). The notion of purity has recently been extended to the enriched context in \cite{RTe1}, were was studied its connection with enriched injectivity classes. We now recall this notion. 

Given a $\cv$-category $\ck$ and two morphisms $f\colon K\to L$ and $g\colon A\to B$ in it, we denote by $\cp(g,L)$ the $(\ce,\cm)$ factorization below.
\begin{center}
	\begin{tikzpicture}[baseline=(current  bounding  box.south), scale=2]
		
		\node (21) at (0,0) {$\ck(B,L)$};
		\node (22) at (1.4,0.5) {$\cp(g,L)$};
		\node (23) at (2.8,0) {$\ck(A,L)$};
		
		\path[font=\scriptsize]
		
		(21) edge [->>] node [above] {$\ce$} (22)
		(21) edge [->] node [below] {$\ck(g,L)$} (23)
		(22) edge [>->] node [above] {$\cm$} (23);
	\end{tikzpicture}	
\end{center} 
Let then $\cp(g,f)$ be the pullback of $\cp(g,L)$ along $\ck(A,f)$, then there is an induced map $r\colon\ck(B,K)\to\cp(g,f) $ as depicted below.
\begin{center}
	\begin{tikzpicture}[baseline=(current  bounding  box.south), scale=2]
		
		\node (a0) at (-0.5,1.2) {$\ck(B,K)$};
		\node (b0) at (1,1.2) {$\ck(B,L)$};
		\node (c0) at (0,-0.2) {$\ck(A,K)$};
		\node (d0) at (1.5,-0.2) {$\ck(A,L)$};
		\node (b'0) at (0,0.55) {$\cp(g,f)$};
		\node (c'0) at (1.5,0.55) {$\cp(g,L)$};
		
		\path[font=\scriptsize]

		(a0) edge [->] node [above] {$\ck(B,f)$} (b0)
		(b'0) edge [->] node [above] {$r'$} (c'0)
		(b0) edge [->>] node [right] {} (c'0)
		(c'0) edge [>->] node [right] {} (d0)
		(c0) edge [->] node [below] {$\ck(A,f)$} (d0)
		(a0) edge [bend right,->] node [left] {$\ck(g,K)$} (c0)
		(a0) edge [dashed, ->] node [right] {$r$} (b'0)
		(b'0) edge [>->] node [right] {} (c0);
	\end{tikzpicture}	
\end{center}

\begin{defi}[\cite{RTe1}]\label{pure-def1}
	We say that $f\colon K\to L$ is $\ce$-\textit{pure with respect to $g$} if the map $r$ above is in $\ce$. We say that $f$ is $(\lambda,\ce)$-\textit{pure} if it is $\ce$-pure with respect to every $g\colon A\to B$ with $A$ and $B$ $\lambda$-presentable.
\end{defi}

Then, when $\cv=\Set$ with the (epi, mono) factorization system, one recovers the ordinary notion of purity. The cases of $\cv=\Met,\Ban,\omega$-$\CPO$, with the factorization system induced by dense maps, as well as the case where $\cv$ is a symmetric monoidal quasivariety, with the (regular epi, mono) factorization system, were all studied in \cite{RTe1}, where the definition above was unpacked to give a more approachable set of conditions.

The following proposition proves that the correspondence between $\lambda$-elementary and $(\lambda,\ce)$-pure morphisms still holds in the enriched framework.

\begin{propo}\label{pure}
 A morphism $f\colon K\to L$ in $\Str(\mathbb L)$ is $(\ce,\lambda)$-pure if and only if it is $\lambda$-elementary.
\end{propo}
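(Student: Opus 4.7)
The plan is to use Lemma~\ref{present-morphism} to translate $(\lambda,\ce)$-purity into conditions on the interpretations of presentation formulas, then close the loop via pullback pasting and the uniqueness of $(\ce,\cm)$-factorizations.

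Fix a morphism $g\colon A\to B$ between $\lambda$-presentable $\LL$-structures and let $\pi^A$, $\pi^B$, $\chi$ be the formulas produced by Lemma~\ref{present-morphism}. Then $\ck(A,K)\cong\pi^A_K$, $\ck(B,K)\cong\pi^B_K$, the $(\ce,\cm)$-factorization of $\ck(g,K)$ is $\pi^B_K\twoheadrightarrow\chi_K\hookrightarrow\pi^A_K$, and in particular $\cp(g,L)\cong\chi_L$. By naturality of the factorization, the map $r$ of Definition~\ref{pure-def1} factors as $r=s\circ e$, with $e\colon\pi^B_K\twoheadrightarrow\chi_K$ in $\ce$ and $s\colon\chi_K\to\cp(g,f)$ the unique arrow into the pullback $\cp(g,f)=\chi_L\times_{\pi^A_L}\pi^A_K$. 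The composite of $s$ with the $\cm$-map $\cp(g,f)\hookrightarrow\pi^A_K$ equals the $\cm$-map $\chi_K\hookrightarrow\pi^A_K$; a standard orthogonality argument then shows that $\cm$ is left-cancellable along $\cm$, so $s\in\cm$. Consequently $(e,s)$ is the $(\ce,\cm)$-factorization of $r$, and $r\in\ce$ if and only if $s$ is an isomorphism, i.e.\ if and only if the commutative square of $\chi_K\to\chi_L$ over $\pi^A_K\to\pi^A_L$ is a pullback.

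For $\lambda$-elementary $\Rightarrow$ $(\lambda,\ce)$-pure, observe that $\pi^A$ (a conjunction of atomic formulas) and $\chi$ are both pp $\LL_{\lambda\lambda}$-formulas of the common arity $Z_A$. Elementarity of $f$ with respect to each gives a pullback square over $K^{Z_A}\to L^{Z_A}$, and pullback pasting collapses these two stacked squares into the square relating $\chi$ and $\pi^A$ from the previous paragraph, so $s$ is an isomorphism and $r\in\ce$.

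For the converse, given a pp $\LL_{\lambda\lambda}$-formula $\psi(x)\equiv(\exists y)\varphi(x,y)$ of arity $X$, I would realize $\psi$ as the $\chi$ of a concrete morphism. Using Corollary~\ref{conj->pres}, let $A$ be the $\LL$-structure presented by $\top$ of arity $X$ (so $\ck(A,K)\cong K^X$ naturally) and let $B$ be the one presented by the conjunction $\varphi$; define $g\colon A\to B$ via the universal element $X\hookrightarrow X+Y\to B$ coming from $\id_B\in\varphi_B$. Then $\ck(g,K)\colon\varphi_K\to K^X$ is the first projection, whose $(\ce,\cm)$-factorization is $\varphi_K\twoheadrightarrow\psi_K\hookrightarrow K^X$ by definition of $\psi$. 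Uniqueness of this factorization identifies $\chi_K\cong\psi_K$ and $\pi^A_K\cong K^X$ compatibly with the arrows induced by $f$, turning the square of the reduction into the elementarity square for $\psi$; $(\lambda,\ce)$-purity of $f$ then forces it to be a pullback. The main obstacle I foresee is correctly identifying the factorization $r=s\circ e$ and showing $s\in\cm$; once this reduction is established, both directions reduce formally to pullback pasting together with the uniqueness statement in Lemma~\ref{present-morphism}.
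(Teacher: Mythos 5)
Your proposal is correct and takes essentially the same route as the paper: the forward direction rests on Lemma~\ref{present-morphism} (identifying the $(\ce,\cm)$-factorization of $\Str(\mathbb L)(g,K)$ as $\pi^B_K\twoheadrightarrow\chi_K\rightarrowtail\pi^A_K$, so that purity with respect to $g$ reduces to a pullback condition on $\chi$), and the converse uses Corollary~\ref{conj->pres} with the same map $g\colon FX\to B$ built from the universal element of the structure presented by $\varphi$, exactly as in the paper. The only presentational difference is that you make the reduction explicit by factoring $r=s\circ e$ with $s\in\cm$ and then paste the elementarity squares for both $\chi$ and $\pi^A$, whereas the paper identifies $\cp(g,f)\cong\chi_K$ directly from elementarity with respect to $\chi$ alone (using $\chi_K\subseteq\pi^A_K$ and that $\cm$-maps are monic); both arguments are sound.
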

\begin{proof}
	Let $f\colon K\to L$ be elementary with respect to 
positive-primitive formulas, and $g\colon A\to B$ a morphism between $\lambda$-presentable objects. Consider $\pi^A(x)$, $\pi^B(y)$, and $\chi(x)$ as in Lemma~\ref{present-morphism} for $g$. Then the diagram defining the $\ce$-purity of $f$ with respect to $g$ becomes as below
	\begin{center}
		\begin{tikzpicture}[baseline=(current  bounding  box.south), scale=2]
			
			\node (a0) at (-0.5,1.1) {$\pi^B_K$};
			\node (b0) at (0.8,1.1) {$\pi^B_L$};
			\node (c0) at (0,-0.2) {$\pi^A_K$};
			\node (d0) at (1.3,-0.2) {$\pi^A_L$};
			\node (b'0) at (0,0.55) {$\chi_K$};
			\node (a0') at (0.2,0.4) {$\lrcorner$};
			\node (c'0) at (1.3,0.55) {$\chi_L$};
			
			\path[font=\scriptsize]

			(a0) edge [->] node [above] {$\pi^B_f$} (b0)
			(b'0) edge [->] node [above] {$r'$} (c'0)
			(b0) edge [->>] node [right] {} (c'0)
			(c'0) edge [>->] node [right] {} (d0)
			(c0) edge [->] node [below] {$\pi^A_f$} (d0)
			(a0) edge [bend right,->] node [left] {$\pi^g_K$} (c0)
			(a0) edge [dashed, ->] node [right] {$r$} (b'0)
			(b'0) edge [>->] node [right] {} (c0);
		\end{tikzpicture}	
	\end{center}
	where we used Lemma~\ref{present-morphism}(4) for the isomorphism $\cp(g,L)\cong\chi_L$, and that $f$ is elementary with respect to $\chi(x)$ for $\cp(g,f)\cong\chi_K$. The morphisms $\pi^B_f$ and $\pi^B_f$ are those induced in the sense explained at the beginning of Section~\ref{sect:pres}. It follows that $r$ is in $\ce$ again thanks to Lemma~\ref{present-morphism}(4); thus $f$ is $\ce$-pure with respect to $g$.
	
	Conversely, assume that $f\colon K\to L$ is an $(\ce,\lambda)$-pure morphism between $\mathbb L$-structures, and 
	$$\psi(x):= (\exists y) \varphi(x,y) $$
	be any positive-primitive $X$-ary formula, where $\varphi$ is $X+Y$-ary and a conjunction of atomic formulas. 
	By Corollary~\ref{conj->pres} we find a $\lambda$-presentable $\mathbb L$-structure $B$ such that $\varphi(x,y)$ is a presentation formula for $B$. Moreover the $\cv$-natural inclusions
	$$ \Str(\mathbb L)(B,K)\cong \varphi_K\longrightarrow K^{X+Y}\cong \Str(\mathbb L)(FX+FY,K)$$
	are induced from a morphism $h\colon FX+FY\to B$ in $\Str(\mathbb L)$. Then we define $g\colon A\to B$ to be the composite
	$$ A:= FX \xrightarrow{\ i_1\ } FX+FY\xrightarrow{\ h\ } B. $$
	Consider now the diagram involved in the definition of purity. By construction of $B$ and $A$, the definition of $\varphi$ with respect to $\psi$, and since by hypothesis $f$ is $\ce$-pure with respect to $g$, that diagram becomes.
	\begin{center}
		\begin{tikzpicture}[baseline=(current  bounding  box.south), scale=2]
			
			\node (a0) at (-0.5,1.2) {$\varphi_K$};
			\node (b0) at (0.8,1.2) {$\varphi_L$};
			\node (c0) at (0,-0.2) {$K^X$};
			\node (d0) at (1.3,-0.2) {$L^X$};
			\node (b'0) at (0,0.55) {$\cp(g,f)$};
			\node (a0') at (0.2,0.4) {$\lrcorner$};
			\node (c'0) at (1.3,0.55) {$\psi_L$};
			
			\path[font=\scriptsize]

			(a0) edge [->] node [above] {$\varphi_f$} (b0)
			(b'0) edge [->] node [above] {$r'$} (c'0)
			(b0) edge [->>] node [right] {} (c'0)
			(c'0) edge [>->] node [right] {} (d0)
			(c0) edge [->] node [below] {$f^X$} (d0)
			(a0) edge [bend right,->] node [left] {$g^*$} (c0)
			(a0) edge [->>] node [right] {$r$} (b'0)
			(b'0) edge [>->] node [right] {} (c0);
		\end{tikzpicture}	
	\end{center}
	So that $\cp(g,L)$ is the $(\ce,\cm)$ factorization of $g^*$, which by definition is given by the composite
	$$ \varphi_K\to K^{X+Y}\xrightarrow{\pi_1}K^X. $$
	Thus $\cp(g,L)\cong \psi_K$ and hence $f$ is elementary with respect to $\psi$. 
\end{proof}

\section{Regular theories and injectivity}\label{regular-section}

In this section we compare the subcategories of models of enriched regular theories in $\Str(\LL)$ with those subcategories which arise as enriched injectivity classes in the sense of \cite{LR12}. We keep the assumptions of~\ref{assumption} on our base $\cv$ and the enriched factorization system $(\ce,\cm)$ on $\cv$. 

\begin{defi}\label{regular}
	A {\em regular} theory $\mathbb T$ is a set of sequents of the form
	$$(\forall x)(  \varphi(x) \vdash (\exists y)(\psi(x,y)\wedge \varphi(x)))$$
	where $\varphi$ and $\psi$ are conjunctions of atomic formulas in $\mathbb L_{\lambda\lambda}$. We call these sequents \textit{regular}.
\end{defi}

\begin{rem}
	As discussed in the introduction, these is not the most obvious generalization of the notion of regular theory from ordinary logic. This choice was made necessary by the fact that, since $\ce$ may not be stable under pullbacks, certain deduction rules of ordinary regular logic do not hold any more. In particular the sequent above is not in general equivalent to the sequent 
	$$(\forall x)(  \varphi(x) \vdash (\exists y)\psi(x,y))$$
	(such equivalence holds whenever $\ce$ is pullback stable, see Remark~\ref{existential-wedge}). In Theorems~\ref{inj1.5} and~\ref{inj2} we give conditions so that a more natural notion of regular theory can be considered; in~\ref{inj2} these condition still allow $\ce$ not to be pullback-stable.
\end{rem}

Regular theories capture the following sequents:

\begin{exam}
	Any sequent of the form 
	$$(\forall x)(  \varphi(x) \vdash \psi(x)),$$
	with $\varphi$ and $\psi$ conjunctions of atomic formulas, can be seen as a regular sequent.
	
	Indeed, $\psi$ can be though as having arity $X+0$, so that existential quantification on the variable $y:0$ is trivial. This way $(\exists y)(\psi\wedge\varphi)$ is the same as $\psi\wedge\varphi$; and it is easy to see that $A\models (\varphi\vdash \psi\wedge\varphi)$ if and only if $A\models (\varphi\vdash \psi)$ (since no existential quantification is involved). 
\end{exam}

\begin{exam}
	Note that the validity of the formula 
	$$ (\forall x)(\exists y)\ \psi(x,y), $$
	with $\psi$ a conjunction of atomic formulas, is equivalent to that of the regular sequent
	$(\forall x)(  \top \vdash (\exists y)(\psi(x,y)\wedge \top))$
where $\top$ is the empty conjunction.
	Hence this type of sentences can be considered within the framework of regular theories. These include also sentences of the form 
	$$(\forall x)  \varphi(x),$$
	where $\varphi$ is a conjunction of atomic formulas (taking existential quantification over $y: 0$).
\end{exam}

Recall from~\cite{LR12} that, given a morphism $h\colon A\to B$ in a $\cv$-category $\ck$, an object $K$ is $h$-injective (over $\ce$) if the map
$$
\ck(h,K)\colon \ck(B,K)\to\ck(A,K)
$$
lies in $\ce$.  

\begin{defi}
	Given a morphism $h:A\to B$ between $\lambda$-presentable $\mathbb L$-structure $A$ and $B$, we say that a regular sequent $\iota_h$ of $\mathbb L_{\lambda\lambda}$ is an \textit{injectivity sequent} for $h$ if a $\mathbb L$-structure $K$ is $h$-injective if and only if $K\models\iota_h$. 
\end{defi}

Given a set $\ch$ of morphisms between $\lambda$-presentable $\mathbb L$-structures, an $\mathbb L$-structure $K$ is $\ch$-injective if it is 
injective to every $h\in\ch$. Such classes of $\mathbb L$-structures are called $\lambda$-injectivity classes, or $(\lambda,\ce)$-injectivity classes if we want to stress $\ce$.

\begin{theo}\label{inj1}
	Under Assumption~\ref{assumption}, the following are equivalent for a full subcategory $\ca$ of $\Str(\LL)$:\begin{enumerate}
		\item $\ca$ is a $(\lambda,\ce)$-injectivity class in $\Str(\LL)$;
		\item $\ca\cong\Mod(\TT)$ for a regular $\mathbb L_{\lambda\lambda}$-theory $\TT$.
	\end{enumerate}
\end{theo}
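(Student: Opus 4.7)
The plan is to reduce the theorem to an injectivity--sequent correspondence at the level of generators: for every regular $\mathbb L_{\lambda\lambda}$-sequent I would exhibit a morphism $h$ between $\lambda$-presentable $\LL$-structures whose $h$-injective objects are exactly the models of the sequent, and conversely. Both implications then follow by ranging over the generators of a regular theory or of an injectivity class.

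The central computation I would carry out first is this. Let $(\forall x)(\varphi(x)\vdash(\exists y)(\psi(x,y)\wedge\varphi(x)))$ be a regular sequent. Since both $\varphi(x)$ and $\psi(x,y)\wedge\varphi(x)$ are $\lambda$-ary conjunctions of atomic formulas, Corollary~\ref{conj->pres} yields $\lambda$-presentable $\LL$-structures $A$ and $B$ presenting them respectively. The first projection $\pi_1\colon K^{X+Y}\to K^X$ restricts to a natural transformation $(\psi\wedge\varphi)_{(-)}\to\varphi_{(-)}$ (because $(\psi\wedge\varphi)_K$ is contained in the pullback $\pi_1^*\varphi_K$), and through the presentation isomorphisms this corresponds by Yoneda to a unique morphism $h\colon A\to B$. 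By construction of the interpretation of existential quantification, the $(\ce,\cm)$-factorization of $\Str(\LL)(h,K)$ identifies with
$$\Str(\LL)(B,K)\twoheadrightarrow(\exists y)(\psi\wedge\varphi)_K\rightarrowtail\varphi_K\cong\Str(\LL)(A,K),$$
where the right-hand $\cm$-map exists by uniqueness of $(\ce,\cm)$-factorizations and the fact that $\varphi_K\rightarrowtail K^X$ is in $\cm$. Hence $K$ satisfies the sequent (i.e.\ the $\cm$-part is an isomorphism) if and only if $\Str(\LL)(h,K)\in\ce$, i.e.\ $K$ is $h$-injective. This already proves $(2)\Rightarrow(1)$ by applying the construction to every sequent in the theory.

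For $(1)\Rightarrow(2)$, I would go in the opposite direction: given a morphism $h\colon A\to B$ between $\lambda$-presentable structures, apply Lemma~\ref{present-morphism} to obtain presentation formulas $\pi^A(x)$, $\pi^B(y)$, together with the positive-primitive formula
$$\chi(x)\equiv(\exists y)(\pi^A(x)\wedge\pi^B(y)\wedge(\tau(y)=\eps(x))).$$
By part~(4) of that lemma, the $(\ce,\cm)$-factorization of $\Str(\LL)(h,K)$ is $\pi^B_K\twoheadrightarrow\chi_K\rightarrowtail\pi^A_K$, so $K$ is $h$-injective iff $\chi_K=\pi^A_K$, i.e.\ iff $K$ satisfies the regular sequent
$$(\forall x)(\pi^A(x)\vdash(\exists y)(\psi(x,y)\wedge\pi^A(x)))$$
with $\psi:=\pi^B(y)\wedge(\tau(y)=\eps(x))$. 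The main obstacle is the availability of presentation formulas themselves: this requires the full strength of Propositions~\ref{pres1} and~\ref{pres2}, which is precisely the reason behind the technical hypotheses collected in Assumption~\ref{assumption}. Beyond that, the remaining verifications are essentially bookkeeping: checking that the natural projection at the level of presented functors corresponds under Yoneda to the expected morphism of presented structures, and that the successive $\cm$-subobjects of $K^X$ match up as claimed via the factorization system.
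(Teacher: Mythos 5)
Your proposal is correct and takes essentially the same route as the paper: the direction $(1)\Rightarrow(2)$ is exactly the paper's argument, extracting the injectivity sequent $(\forall x)(\pi^A(x)\vdash\chi(x))$ from Lemma~\ref{present-morphism} and using its part (4) to translate $h$-injectivity into satisfaction. For $(2)\Rightarrow(1)$ the paper realizes the representing object of $\psi\wedge\varphi$ concretely as a pushout $B=A+_{F(X)}C$ of the structures presenting $\varphi$ and $\psi$ and reads off $g\colon A\to B$ from the pushout square, whereas you apply Corollary~\ref{conj->pres} directly to the conjunction and obtain $h$ by Yoneda from the projection-induced transformation $(\psi\wedge\varphi)_{(-)}\to\varphi_{(-)}$ --- an equivalent, slightly more abstract packaging of the same computation, with the identification of the $(\ce,\cm)$-factorization of $\Str(\LL)(h,K)$ and the comparison with the sequent's satisfaction carried out just as in the paper.
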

\begin{proof}
	$(1)\Rightarrow(2)$. It is enough to show that every morphism between $\lambda$-presentable $\mathbb L$-structures has an injectivity sequent. 
	
	Let $h\colon A\to B$ be a morphism between $\lambda$-presentable $\mathbb L$-structures; then we can consider $\pi^A(x)$, $\pi^B(y)$, and $\chi(x)$ as in Lemma~\ref{present-morphism}. We shall prove that the sequent 
	$$ \iota_h:= (\forall x)(\pi^A(x)\vdash \chi(x) )$$
	is an injectivity sequent for $h$. This sequent is clearly regular. Note that the other sequent is satisfied by any $\mathbb L$-structure $K$ by Lemma~\ref{present-morphism}.
	
	Let $K$ be any $\mathbb L$-structure; then $K$ satisfies $\iota_h$ if and only if  $\chi_K\cong \pi^A_K$ as $\cm$-subobjects of $K^{Z_A}$ (since $K$ always satisfies the other sequent), if and only if the map $\Str(\mathbb L)(h,K)$ is in $\ce$ (by point (4) of Lemma~\ref{present-morphism}), if and only if $K$ is injective with respect to $h$.

	$(2)\Rightarrow(1)$. It is enough to prove that given any sequent
	$$(\forall x)(  \varphi(x) \vdash (\exists y)(\psi(x,y)\wedge \varphi(x))),$$
	where $\varphi$ and $\psi$ are conjunctions of atomic formulas of
	$\mathbb L_{\lambda\lambda}$, there exists a morphism $g\colon A\to B$ between $\lambda$-presentable $\mathbb L$-structures for which: $K$ is $\ce$-injective with respect to $g$ if and only if $K\models \varphi\vdash (\exists y)(\psi\wedge\varphi)$, for any $K\in\Str(\mathbb L)$.
	
	By Corollary~\ref{conj->pres} there exists $\lambda$-presentable $\mathbb L$-structures $A$ and $C$ for which $\varphi(x)$ and $\psi(x,y)$ are presentation formulas for $A$ and $C$ respectively; these come together with maps $e\colon F(X)\to A$ and $e'\colon F(X+Y)\to C$. Consider now the pushout $B$ of $e$ along $e'F(i_X)$, as depicted below.
	\begin{center}
		\begin{tikzpicture}[baseline=(current  bounding  box.south), scale=2]
			
			\node (a0) at (0.1,0.9) {$F(X)$};
			\node (b0) at (1.2,0.9) {$A$};
			\node (c0) at (0.1,0) {$F(X+Y)$};
			\node (d0) at (1.2,0) {$C$};
			\node (1) at (2,0.45) {$B$};
			
			\path[font=\scriptsize]
			
			(a0) edge [->] node [left] {$F(i_X)\ $} (c0)
			(a0) edge [->] node [above] {$e$} (b0)
			(c0) edge [->] node [below] {$e'$} (d0)
			(b0) edge [->] node [above] {$\ g$} (1)
			(d0) edge [->] node [left] {} (1);
		\end{tikzpicture}	
	\end{center} 
	By homming into an $\mathbb L$-structure $K$, using the definition of presentation formula, and taking the $(\ce,\cm)$ factorizations of the arrows corresponding to $e F(i_1)$ and $e' F(j_1)$, we obtain the pullback diagram below.
	\begin{center}
		\begin{tikzpicture}[baseline=(current  bounding  box.south), scale=2]
			
			\node (0) at (-2.5,0.7) {$\Str(\LL)(B,K)\cong (\psi\wedge\varphi)_K$};
			\node (a0) at (0,1.4) {$\Str(\LL)(A,K)\cong \varphi_K$};
			\node (b0) at (2,1.4) {$K^X$};
			\node (c0) at (0.1,0) {$\Str(\LL)(C,K)\cong \psi_K$};
			\node (d0) at (2,0) {$K^{X+Y}$};
			\node (1) at (0.4,0.7) {$(\exists y)(\psi\wedge\varphi)_K$};
			
			\path[font=\scriptsize]
			
			(0) edge [->] node [above] {$\Str(\LL)(g,K)\ \ \ \ \ \ \ \ $} (a0)
			(0) edge [->] node [below] {} (c0)
			(a0) edge [>->] node [above] {} (b0)
			(c0) edge [>->] node [below] {} (d0)
			(d0) edge [->] node [left] {} (b0)
			
			(0) edge [->>] node [left] {} (1)
			(1) edge [>->] node [left] {} (b0);
		\end{tikzpicture}	
	\end{center} 
	Note that by orthogonality we already have an inclusion $(\exists y)(\psi\wedge\varphi)_K\subseteq \varphi_K$.
	
	Now, if $K$ is $\ce$-injective with respect to $g$ then $\Str(\LL)(g,K)$ is in $\ce$; thus, by orthogonality, there exists an arrow $\varphi_K\to(\exists y)(\psi\wedge\varphi)_K$ making the square commute. This shows that $K\models \varphi\vdash (\exists y)(\psi\wedge\varphi)$.
	
	Conversely, if $K$ satisfies the sequent, then $(\exists y)(\psi\wedge\varphi)_K=\varphi_K$ as $\cm$-subobjects of $K^X$ (since we already had the other inclusion). Thus $\Str(\LL)(g,K)$ is in $\ce$.
\end{proof}

As a consequence we can characterize $\ce$-injectivity classes in $\cv$ as classes of models of regular theories:

\begin{coro}
	Let $(\ce,\cm)$ be an enriched proper factorization system on $\cv$, and let $\LL^\emptyset:=\emptyset$ be the empty language. Then, the following are equivalent for a full subcategory $\ca$ of $\cv$:\begin{enumerate}
		\item $\ca$ is a $(\lambda,\ce)$-injectivity class in $\cv$;
		\item $\ca\cong\Mod(\TT)$ for a regular $\mathbb L^\emptyset_{\lambda\lambda}$-theory $\TT$.
	\end{enumerate}
\end{coro}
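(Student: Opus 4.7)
The plan is to derive the corollary as a direct application of Theorem~\ref{inj1} to the empty language $\mathbb L^\emptyset$. Two preliminary observations are needed: first, that $\mathbb L^\emptyset$ satisfies Assumption~\ref{assumption}, and second, that $\Str(\mathbb L^\emptyset)\cong\cv$ as $\cv$-categories.

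The first observation is immediate: $\mathbb L^\emptyset$ is vacuously a $\lambda$-ary relational language (with no relation symbols), so the first alternative in Assumption~\ref{assumption} is satisfied for the given factorization system $(\ce,\cm)$, taking $(\ce_0,\cm_0) = (\ce,\cm)$. With this choice the inclusion $J$ of Assumption~\ref{assumption} is the identity and trivially preserves $\lambda$-presentable objects.

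For the second observation, I would unwind the definitions of $\Str(\RR)$ and $\Str(\LL)$. When $\mathbb R=\emptyset$, the discrete $\cv$-category $\mathbb R_\cv$ is empty, the pushout $\Theta_\mathbb R$ collapses to $\cv_\lambda^{op}$, and both products $\prod_\mathbb R\cm$ and $\prod_\mathbb R\cv^\to$ are terminal. The pullback defining $\Str(\mathbb R)$ therefore reduces to $\cv\cong\Str(\mathbb R)$, with $U_\mathbb R^1$ an isomorphism. Similarly, for $\mathbb F=\emptyset$ one has $\Str(\mathbb F)\cong\cv$ by \cite[Section~3]{RTe}. The pullback defining $\Str(\mathbb L^\emptyset)$ is then the pullback of two copies of $\id_\cv$, which is $\cv$ itself. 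Moreover, the interpretation of atomic formulas, conjunctions, and existential quantification over the empty language uses only the forgetful functor $U\colon\Str(\mathbb L^\emptyset)\to\cv$, which under the identification above is the identity; so the entire fragment of regular $\mathbb L^\emptyset_{\lambda\lambda}$-formulas is available as a logic on $\cv$.

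Given these two identifications, the equivalence follows immediately from Theorem~\ref{inj1}: $(\lambda,\ce)$-injectivity classes in $\Str(\mathbb L^\emptyset)$ are precisely $(\lambda,\ce)$-injectivity classes in $\cv$, and are equivalently described as $\Mod(\TT)$ for a regular $\mathbb L^\emptyset_{\lambda\lambda}$-theory $\TT$. The only technical point that requires a moment of care is verifying the identification $\Str(\mathbb L^\emptyset)\cong\cv$ cleanly at the level of enriched categories (not merely ordinary ones) and checking that interpretations are preserved by this isomorphism, but since the pullback in $\cv$-$\Cat$ of identity functors over $\cv$ is manifestly $\cv$, this is essentially formal.
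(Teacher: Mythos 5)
Your proposal is correct and follows essentially the same route as the paper, which disposes of the corollary in two lines by noting that Assumption~\ref{assumption} holds vacuously for $\LL=\emptyset$ (taking $(\ce_0,\cm_0)=(\ce,\cm)$) and that $\Str(\LL^\emptyset)=\cv$, then invoking Theorem~\ref{inj1}. Your extra unwinding of the pullback definitions of $\Str(\RR)$ and $\Str(\LL)$ to justify the identification $\Str(\LL^\emptyset)\cong\cv$ is a sound elaboration of what the paper leaves implicit.
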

\begin{proof}
	Assumption~\ref{assumption} is always satisfied for $\LL=\emptyset$, and in that case $\Str(\LL)=\cv$. Thus the result follows from Theorem~\ref{inj1} above.
\end{proof}

In the ordinary case, regular theories are defined as sequents of the form 
$$(\forall x)(  \varphi(x) \vdash \psi(x))$$
where $\varphi$ and $\psi$ are positive-primitive formulas in $\mathbb L_{\lambda\lambda}$. However, we have seen above that these may not classify $\ce$-injectivity classes in $\Str(\LL)$. Below we shall give two conditions on $\ce$ so that such classification will be possible.

The first condition is simply stability of $\ce$ under pullbacks. In this case the proof of the following result is a direct applications of the constructions introduced in this paper:

\begin{theo}\label{inj1.5}
	Let $\ce$ be stable under pullbacks. Under Assumption~\ref{assumption}, the following are equivalent for a full subcategory $\ca$ of $\Str(\LL)$:\begin{enumerate}
		\item $\ca\cong\Mod(\TT)$ for a regular $\mathbb L_{\lambda\lambda}$-theory $\TT$;
		\item $\ca\cong\Mod(\TT)$ for a theory $\TT$ with sequents of the form
		$$(\forall x)(  \varphi(x) \vdash (\exists y)\psi(x,y))$$
		where $\varphi$ and $\psi$ are conjunctions of atomic formulas in $\mathbb L_{\lambda\lambda}$;
		\item $\ca\cong\Mod(\TT)$ for a theory $\TT$ with sequents of the form
		$$(\forall x)(  \varphi(x) \vdash \psi(x))$$
		where $\varphi$ and $\psi$ are positive-primitive formulas in $\mathbb L_{\lambda\lambda}$;
		\item $\ca$ is a $(\lambda,\ce)$-injectivity class in $\Str(\LL)$.
	\end{enumerate}
\end{theo}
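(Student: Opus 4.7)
The plan is to leverage Theorem~\ref{inj1}, which already establishes $(1) \Leftrightarrow (4)$ under Assumption~\ref{assumption} without requiring pullback stability of $\ce$. It therefore suffices to prove $(1)\Leftrightarrow(2)\Leftrightarrow(3)$, and the added hypothesis on $\ce$ will enter only via the Frobenius condition recorded in Remark~\ref{existential-wedge}.

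For $(1)\Leftrightarrow(2)$, I will show that a regular sequent
$$(\forall x)(\varphi(x) \vdash (\exists y)(\psi(x,y) \wedge \varphi(x)))$$
is equivalent to $(\forall x)(\varphi(x) \vdash (\exists y)\psi(x,y))$. The direction $(1)\Rightarrow(2)$ follows from the trivial inclusion $(\psi\wedge\varphi)_A \subseteq \psi_A$ as $\cm$-subobjects of $A^{X+Y}$, which after existential quantification yields $((\exists y)(\psi\wedge\varphi))_A \subseteq ((\exists y)\psi)_A$ (taking $(\ce,\cm)$-factorizations of the corresponding composites with $p_1$). For $(2)\Rightarrow(1)$, from $\varphi \vdash \varphi$ together with $\varphi \vdash (\exists y)\psi$ one obtains $\varphi \vdash \varphi \wedge (\exists y)\psi$, and pullback-stability of $\ce$ then gives the Frobenius equivalence $\varphi \wedge (\exists y)\psi \equiv (\exists y)(\varphi \wedge \psi)$ discussed in Remark~\ref{existential-wedge}, which recovers the regular form.

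For $(2)\Leftrightarrow(3)$, the direction $(2)\Rightarrow(3)$ is immediate since any conjunction of atomic formulas is positive-primitive (via vacuous existential quantification). For $(3)\Rightarrow(2)$, I claim any sequent
$$(\forall x)\bigl((\exists y)\alpha(x,y) \vdash (\exists z)\beta(x,z)\bigr)$$
with $\alpha,\beta$ conjunctions of atomic formulas is equivalent to $(\forall x,y)(\alpha(x,y) \vdash (\exists z)\beta(x,z))$, which is of the form in (2). Semantically, for any $\LL$-structure $K$, an inclusion $((\exists y)\alpha)_K \subseteq ((\exists z)\beta)_K$ of $\cm$-subobjects of $K^X$ is, by orthogonality of $(\ce,\cm)$ applied to the defining factorization $\alpha_K \twoheadrightarrow ((\exists y)\alpha)_K \rightarrowtail K^X$, equivalent to asking that the composite $\alpha_K \rightarrowtail K^{X+Y} \xrightarrow{p_1} K^X$ factor through $((\exists z)\beta)_K$; this in turn is equivalent to $\alpha_K$ being contained in the pullback $p_1^*((\exists z)\beta)_K$ in $K^{X+Y}$, which is exactly the interpretation of the new sequent over arity $X+Y$. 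Note $X+Y\in\cv_\lambda$ since the language is $\lambda$-ary.

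I expect the main obstacle to be careful bookkeeping of the interpretations—verifying that each syntactic rewriting really preserves the corresponding $\cm$-subobject inclusions under the paper's recursive definition of $\varphi_A$—rather than any deep categorical difficulty. Once the Frobenius identity from Remark~\ref{existential-wedge} is available via pullback-stability of $\ce$, the remaining manipulations are the standard ones of regular logic transported across the orthogonality property of $(\ce,\cm)$.
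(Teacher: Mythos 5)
Your proposal is correct, but it takes a genuinely different route from the paper. The paper's proof treats $(1)\Rightarrow(2)\Rightarrow(3)$ as immediate by definition (a regular sequent $\varphi\vdash(\exists y)(\psi\wedge\varphi)$ is \emph{literally} of form (2) with $\psi':=\psi\wedge\varphi$), gets $(4)\Rightarrow(1)$ from Theorem~\ref{inj1}, and concentrates all the work in $(3)\Rightarrow(4)$: for each sequent $\varphi\vdash\psi$ with both sides positive-primitive, it uses Corollary~\ref{conj->pres} to realize the underlying conjunctions $\varphi'$, $\psi'$ as presentation formulas of $\lambda$-presentable structures $A$ and $C$, forms the pushout $B$ of $F(i_1)$-maps out of $FX$, and shows that $\ce$-injectivity with respect to $g\colon A\to B$ is equivalent to satisfaction of the sequent --- with pullback-stability of $\ce$ entering exactly once, to deduce that $\Str(\LL)(g,K)\in\ce$ from the hom-pullback $\Str(\LL)(B,K)\cong\Str(\LL)(A,K)\times_{\psi_K}\Str(\LL)(C,K)$. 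You instead never construct new injectivity morphisms: you take $(1)\Leftrightarrow(4)$ from Theorem~\ref{inj1} as a black box and reduce forms (2) and (3) to form (1) by sequent-by-sequent logical manipulation, using the Frobenius equivalence of Remark~\ref{existential-wedge} (valid under pullback-stability) for $(2)\Rightarrow(1)$ and a variable-weakening argument for $(3)\Rightarrow(2)$. This is a legitimate trade: the paper's argument produces an explicit injecting morphism for each positive-primitive sequent, while yours establishes the stronger, formula-level fact that under pullback-stability each sequent of form (3) is semantically \emph{equivalent} to a regular sequent, not merely that the two syntactic formats carve out the same classes of models. One step you should make explicit: in $(3)\Rightarrow(2)$ your identification of $p_1^*((\exists z)\beta)_K$ with the interpretation of the weakened formula $(\exists z)\beta(x,z)$ in context $(x,y)$ is a Beck--Chevalley condition --- the $(\ce,\cm)$-image factorization of $\beta_K\to K^{X+Z}\to K^X$ must pull back along $p_1\colon K^{X+Y}\to K^X$ to the factorization computing the weakened existential over $K^{X+Y+Z}\cong K^{X+Y}\times_{K^X}K^{X+Z}$. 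This holds because $\cm$ is always pullback-stable and $\ce$ is pullback-stable by hypothesis, but it is a second, independent use of the pullback-stability assumption beyond the Frobenius identity you flag, and without it the step would fail (e.g.\ for dense maps in $\Met$).
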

\begin{proof}
	The implications $(1)\Rightarrow(2)\Rightarrow(3)$ follow by definition, while $(4)\Rightarrow(1)$ follows from Theorem~\ref{inj1}. 
	
	Thus, we only need to prove $(3)\Rightarrow (4)$. For that it is enough to prove that given any sequent
	$$(\forall x)(  \varphi(x) \vdash \psi(x)),$$
	where $\varphi$ and $\psi$ are positive-primitive formulas of
	$\mathbb L_{\lambda\lambda}$, there exists a morphism $g\colon A\to B$ between $\lambda$-presentable $\mathbb L$-structures for which: $K$ is $\ce$-injective with respect to $g$ if and only if $K\models \varphi\vdash \psi$, for any $K\in\Str(\mathbb L)$.
	
	Let us write $\varphi(x)\equiv (\exists y)\varphi'(x,y)$ and $\psi(x)\equiv (\exists z)\psi'(x,z)$ where $\varphi'$ and $\psi'$ are conjunctions of atomic formulas. By Corollary~\ref{conj->pres} there exists $\lambda$-presentable $\mathbb L$-structures $A$ and $C$ for which $\varphi'$ and $\psi'$ are presentation formulas for $A$ and $C$ respectively; these come together with maps $e\colon F(X+Y)\to A$ and $e'\colon F(X+Z)\to C$. Consider now the pushout $B$ below.
	\begin{center}
		\begin{tikzpicture}[baseline=(current  bounding  box.south), scale=2]
			
			\node (0) at (-1.1,0.45) {$FX$};
			\node (a0) at (0.1,0.9) {$F(X+Y)$};
			\node (b0) at (1.2,0.9) {$A$};
			\node (c0) at (0.1,0) {$F(X+Z)$};
			\node (d0) at (1.2,0) {$C$};
			\node (1) at (2,0.45) {$B$};
			
			\path[font=\scriptsize]
			
			(0) edge [->] node [above] {$F(i_1)\ $} (a0)
			(0) edge [->] node [below] {$F(j_1)\ $} (c0)
			(a0) edge [->] node [above] {$e$} (b0)
			(c0) edge [->] node [below] {$e'$} (d0)
			(b0) edge [->] node [above] {$\ g$} (1)
			(d0) edge [->] node [left] {} (1);
		\end{tikzpicture}	
	\end{center} 
	By homming into an $\mathbb L$-structure $K$, using the definition of presentation formula, and taking the $(\ce,\cm)$ factorizations of the arrows corresponding to $e F(i_1)$ and $e' F(j_1)$, we obtain the pullback diagram below.
	\begin{center}
		\begin{tikzpicture}[baseline=(current  bounding  box.south), scale=2]
			
			\node (0) at (-2,0.45) {$\Str(\LL)(B,K)$};
			\node (a0) at (0.1,0.9) {$\Str(\LL)(A,K)$};
			\node (b0) at (1.5,0.9) {$\varphi_K$};
			\node (c0) at (0.1,0) {$\Str(\LL)(C,K)$};
			\node (d0) at (1.5,0) {$\psi_K$};
			\node (1) at (2.4,0.45) {$K^X$};
			
			\path[font=\scriptsize]
			
			(0) edge [->] node [above] {$\Str(\LL)(g,K)\ \ \ \ \ \ \ \ $} (a0)
			(0) edge [->] node [below] {} (c0)
			(a0) edge [->>] node [above] {} (b0)
			(c0) edge [->>] node [below] {} (d0)
			(b0) edge [>->] node [above] {} (1)
			(d0) edge [>->] node [left] {} (1)
			(b0) edge [dashed, >->] node [left] {$?$} (d0);
		\end{tikzpicture}	
	\end{center} 
	Now, if $K$ is $\ce$-injective with respect to $g$ is follows that the composite $\Str(\LL)(B,K)\to \varphi_K$ above is in $\ce$; thus, by orthogonality, there exists an arrow $\varphi_K\to\psi_K$ making the square commute. This shows that $K\models \varphi\vdash \psi$.
	
	Conversely, if $K$ satisfies the sequent, we have a dashed arrow as in the diagram. Hence $\Str(\LL)(B,K)$ can be seen as the pullback of the cospan 
	$$\Str(\LL)(C,K)\twoheadrightarrow \psi_K\leftarrow \Str(\LL)(A,K)$$ 
	since every map in $\cm$ is a monomorphism. But $\ce$ is pullback stable, thus $\Str(g,K)$ is in $\ce$ and hence $K$ is $\ce$-injective with respect to $g$.
\end{proof}

Pullback stability of $\ce$ is quite restrictive, for instance dense maps in $\Met$ or $\CMet$ do not satisfy it (see \cite[Remark~4.5]{RTe1}).

The second set of conditions we present relies on the notion of purity introduced in \cite{RTe1}, and on the main result of the same paper which classifies injectivity classes in terms of closure under certain constructs. First we need the following result:

\begin{propo}\label{regular1}
	Assume that $\ce$ is closed under products in $\cv^\to$ and 
$\mathbb T$   be a theory consisting of sequents of the form 
	$$(\forall x)(  \varphi(x) \vdash \psi(x))$$
	where $\varphi$ and $\psi$ are positive-primitive formulas in $\mathbb L_{\lambda\lambda}$.
	
	Then $\Mod(\mathbb T)$ is closed in $\Str(\mathbb L)$ under products, powers by $\ce$-stable objects, $\lambda$-directed colimits, and $\lambda$-elementary (equivalently, $(\lambda,\ce)$-pure) subobjects.
\end{propo}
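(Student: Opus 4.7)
The plan is to observe that each of the four closure properties reduces cleanly to results already established in the paper, with Lemma~\ref{product} doing the work for the first three and Proposition~\ref{closure-elementary} handling the last.

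Concretely, fix a sequent $(\forall x)(\varphi(x)\vdash\psi(x))$ in $\mathbb T$ with $\varphi,\psi$ positive-primitive $\mathbb L_{\lambda\lambda}$-formulas of arity $X$, and suppose $(A_i)_{i\in I}$ is a family of $\mathbb L$-structures in $\Mod(\mathbb T)$, so that $\varphi_{A_i}\subseteq\psi_{A_i}$ as $\cm$-subobjects of $A_i^X$. For closure under products, I would take the product in $\cv^\to$ of the inclusions $\varphi_{A_i}\rightarrowtail\psi_{A_i}\rightarrowtail A_i^X$; this product lies in $\cm$ (since $\cm$ is closed under all weighted limits in the arrow category) and, by Lemma~\ref{product}(1), identifies with $\varphi_{\prod A_i}\rightarrowtail\psi_{\prod A_i}\rightarrowtail(\prod A_i)^X$. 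Hence $\prod A_i$ satisfies the sequent. The argument for powers by an $\ce$-stable object $Y$ is identical, using Lemma~\ref{product}(2) instead and the fact that $\cm$ is closed under powers. For a $\lambda$-directed colimit $A\cong\colim A_i$, I would use that $\cm$ is closed under $\lambda$-filtered colimits in $\cv^\to$ (Assumption~\ref{assum-0}): the colimit of the inclusions $\varphi_{A_i}\rightarrowtail\psi_{A_i}$ remains in $\cm$, and Lemma~\ref{product}(3) together with the fact that $(-)^X$ preserves $\lambda$-filtered colimits (since $X\in\cv_\lambda$) identifies this with $\varphi_A\rightarrowtail\psi_A$ inside $A^X$.

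For closure under $\lambda$-elementary subobjects, there is essentially nothing to do: this is precisely the content of Proposition~\ref{closure-elementary}, which says that if $f\colon K\to L$ is $\lambda$-elementary and $L$ satisfies a sequent between positive-primitive $\mathbb L_{\lambda\lambda}$-formulas, then so does $K$. The equivalence with $(\lambda,\ce)$-purity is Proposition~\ref{pure}, so the parenthetical assertion requires no separate argument.

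I do not expect a serious obstacle: the entire proposition is a bookkeeping consequence of earlier results. The only small subtleties are making sure (i) that the arity $X$ of any $\mathbb L_{\lambda\lambda}$-formula is $\lambda$-presentable so that powering by $X$ commutes with $\lambda$-filtered colimits in the third bullet, and (ii) that inclusions of $\cm$-subobjects remain inclusions after passing to the relevant limit or colimit, which is exactly where the hypotheses on $\cm$ (closure under weighted limits, powers, and $\lambda$-filtered colimits in $\cv^\to$) and on $\ce$ (closure under products in $\cv^\to$, needed only to apply Lemma~\ref{product}(1)) enter.
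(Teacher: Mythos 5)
Your proposal is correct and follows exactly the paper's own route: the paper's proof likewise dispatches products, powers by $\ce$-stable objects, and $\lambda$-directed colimits via Lemma~\ref{product}, and handles $\lambda$-elementary subobjects via Proposition~\ref{closure-elementary}, with the purity equivalence supplied by Proposition~\ref{pure}. Your write-up merely makes explicit the bookkeeping (compatibility of the $\cm$-subobject containments with the relevant limits and colimits) that the paper leaves implicit.
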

\begin{proof}
	Following Lemma~\ref{product}, $\Mod(\mathbb T)$ is closed under products, powers by $\ce$-stable objects, and $\lambda$-directed colimits. Closure under $\lambda$-elementary subobjects follows from Proposition~\ref{closure-elementary}, and this coincides with closure under $(\lambda,\ce)$-pure ones by Proposition~\ref{pure}.
\end{proof}

\begin{theo}\label{inj2}
	Suppose that Assumption~\ref{assumption} and the assumptions of \cite[Theorem~5.5]{RTe1} hold. The following are equivalent for a full subcategory $\ca$ of $\Str(\LL)$:\begin{enumerate}
		\item $\ca\cong\Mod(\TT)$ for a regular $\mathbb L_{\lambda\lambda}$-theory $\TT$;
		\item $\ca\cong\Mod(\TT)$ for a theory $\TT$ with sequents of the form
		$$(\forall x)(  \varphi(x) \vdash (\exists y)\psi(x,y))$$
		where $\varphi$ and $\psi$ are conjunctions of atomic formulas in $\mathbb L_{\lambda\lambda}$;
		\item $\ca\cong\Mod(\TT)$ for a theory $\TT$ with sequents of the form
		$$(\forall x)(  \varphi(x) \vdash \psi(x))$$
		where $\varphi$ and $\psi$ are positive-primitive formulas in $\mathbb L_{\lambda\lambda}$;
		\item $\ca$ is closed under products, powers by $\ce$-stable objects, $\lambda$-filtered colimits, and $\lambda$-elementary subobjects;
		\item $\ca$ is closed under products, powers by $\ce$-stable objects, $\lambda$-filtered colimits, and $(\lambda,\ce)$-pure subobjects;
		\item $\ca$ is a $(\lambda,\ce)$-injectivity class in $\Str(\LL)$.
	\end{enumerate}
\end{theo}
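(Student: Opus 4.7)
\medskip
\noindent \textbf{Proof proposal.} The plan is to prove the cycle $(1)\Rightarrow(2)\Rightarrow(3)\Rightarrow(5)\Leftrightarrow(4)$ and $(5)\Rightarrow(6)\Rightarrow(1)$, using the material already built in the paper together with the classification of $(\lambda,\ce)$-injectivity classes from \cite[Theorem~5.5]{RTe1}.

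First I would dispatch the easy implications. The step $(1)\Rightarrow(2)$ is immediate: a regular sequent $(\forall x)(\varphi(x)\vdash(\exists y)(\psi(x,y)\wedge\varphi(x)))$ is already of the form in $(2)$, because $\psi\wedge\varphi$ is a conjunction of atomic formulas and its existential quantification with respect to the variables $y$ is a positive-primitive formula of the prescribed shape. Then $(2)\Rightarrow(3)$ is immediate from the definition of positive-primitive formulas ($\varphi(x)$ being a conjunction of atomic formulas is in particular positive-primitive, via a trivial existential quantification over $0$, as observed in the earlier examples). The equivalence $(1)\Leftrightarrow(6)$ is Theorem~\ref{inj1}.

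Next I would handle $(3)\Rightarrow(5)\Leftrightarrow(4)$. The implication $(3)\Rightarrow(5)$ is exactly Proposition~\ref{regular1}, which establishes closure under products, powers by $\ce$-stable objects, $\lambda$-filtered colimits, and $\lambda$-elementary subobjects. The equivalence $(4)\Leftrightarrow(5)$ is a direct consequence of Proposition~\ref{pure}, which identifies $\lambda$-elementary morphisms with $(\lambda,\ce)$-pure ones (this identification requires Assumption~\ref{assumption}, which is already in force).

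Finally, the remaining implication $(5)\Rightarrow(6)$ is the one that requires the external input. Under the hypotheses of \cite[Theorem~5.5]{RTe1}, a full replete subcategory of a locally $\lambda$-presentable $\cv$-category that is closed under products, powers by $\ce$-stable objects, $\lambda$-filtered colimits, and $(\lambda,\ce)$-pure subobjects is precisely a $(\lambda,\ce)$-injectivity class. Since $\Str(\LL)$ is locally $\lambda$-presentable by Theorem~\ref{L-str}, applying that theorem to $\ca\subseteq\Str(\LL)$ yields $(6)$. This closes the cycle. The only delicate point is checking compatibility of the ambient hypotheses: one must verify that the assumptions of \cite[Theorem~5.5]{RTe1} on the base $\cv$ and on the factorization system $(\ce,\cm)$ are compatible with Assumption~\ref{assumption} (properness, closure of $\cm$ under $\lambda$-filtered colimits, and the existence of enough $\ce_0$-projective generators in the non-relational case); this is the place where the proof could break down, but the compatibility is already baked into the statement of the theorem, so the verification is a matter of quoting the relevant conditions rather than a real obstacle.
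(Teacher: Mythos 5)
Your proposal is correct and takes essentially the same route as the paper: the paper likewise dispatches $(1)\Rightarrow(2)\Rightarrow(3)$ by definition, invokes Proposition~\ref{regular1} for the closure properties, identifies $\lambda$-elementary with $(\lambda,\ce)$-pure subobjects via Proposition~\ref{pure}, obtains $(5)\Rightarrow(6)$ from \cite[Theorem~5.5]{RTe1}, and closes the cycle with $(6)\Rightarrow(1)$ from Theorem~\ref{inj1}. The only cosmetic difference is the routing $(3)\Rightarrow(5)\Leftrightarrow(4)$ in place of the paper's chain $(3)\Rightarrow(4)\Rightarrow(5)$, which rests on exactly the same appeals.
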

\begin{proof}
	The implications $(1)\Rightarrow(2)\Rightarrow(3)$ follow by definition, $(3)\Rightarrow(4)$ follows by Proposition~\ref{regular1} above, and $(4)\Rightarrow(5)$ by Proposition~\ref{closure-elementary}. Then, $(5)\Rightarrow(6)$ is given by \cite[Theorem~5.5]{RTe1}, and finally $(6)\Rightarrow(1)$ follows from Theorem~\ref{inj1}.
\end{proof}

This corollary applies both to $\Met$ and $\Ban$ for the factorization system induced by the dense maps; as well as for any symmetric monoidal quasivariety $\cv$ with the (regular epi, mono) factorization system.

Let us explain now how existential quantification is interpreted for one particular base of enrichment.

\begin{rem}
	Consider $\cv=\Met$ and the factorization system given by the dense maps. In this example we spell out what it means for an $\LL$-structure $M$ to satisfy a formula of the form 
	$$ (\forall x)(\exists y)\ \psi(x,y), $$
	with $\psi(x,y): X+Y$ a conjunction of atomic formulas. Since existential quantification is not involved in the definition of $\psi$, we know that $\psi_M\subseteq M^X\times M^Y$ is identified by all the pairs $(a,b)$ for which $M\models\psi[a,b]$. It follows that $((\exists y)\psi)_M$ is given by the closure of the subspace
	$$ \{ a\in M^X \ | \ \exists b\in M^Y\ M\models \psi[a,b] \}\subseteq M^X $$
	under limits of Cauchy sequences.
	As a consequence, $M\models (\forall x)(\exists y)\ \psi(x,y)$ if and only if 
	\begin{center}\textit{
			for each $a\in M^X$ there is a Cauchy sequence $(a_n)_{n\geq 0}\subseteq M^X$\\ and $(b_n)_{n\geq 0}\subseteq M^Y$ (not necessarily Cauchy) such that $a=\lim_na_n$\\ and $M\models \psi[a_n,b_n]$ for each $n$.}
	\end{center}
	This can equivalently be expressed as follows
	\begin{center}\textit{
			for each $a\in M^X$ and for each $\epsilon>0$ there exist $a'\in M^X$ and $b'\in M^Y$\\ such that $d(a,a')<\epsilon$ and $M\models \psi[a',b']$.}
	\end{center}
	The same argument works in the case of $\cv=\CMet$ or $\cv=\Ban$ where the left class of the factorization system is given again by the dense maps.
\end{rem}

\begin{exam}
	Consider again $\cv=\Met$ with the factorization system given by either the surjections or the dense maps. Let $\mathbb I:=[0,\infty]$ be the positive real line with infinity included; we have two maps $i_0,i_\infty\colon 1\to \mathbb I$ which pick out $0$ and $\infty$ respectively. Then to give an element $p\in M^\mathbb I$ is the same as to give a path (of possibly length infinity) between two points in $M$.
	
	We can then consider the following formula 
	$$ (\forall (x,y):1+1) (\exists p:\mathbb I) (p(i_0)(x)=x\ \wedge\ p (i_\infty)(y)=y) $$ 
	on the empty language over $\Met$.
	
	If $\ce$ is the class of surjections, then a metric space $M$ satisfies the sentence above if and only if it is path connected. More interestingly, if $\ce$ is the class of dense maps, a metric space $M$ satisfies the sentence above if and only if 
	\begin{center}\textit{
			for each $a,b\in M$ and for any $\epsilon>0$ there exist $a',b'\in M$ with $d(a,a'),d(b,b')<\epsilon$\\ and a path $p\colon \mathbb I\to M$ for which $p(0)=a'$ and $p(\infty)=b'$.}
	\end{center}
	A space satisfying the condition above need not be path connected; one example is given by the subspace of $\mathbb R^2$ given by
	$$ \{(x,\tx{sin}(1/x))\ |\  x>0 \}\cup (\{0\}\times [-1,1]) $$
	which is one of the canonical examples of a connected space which is not path connected. However, it is easy to see that this space satisfies the condition above.
\end{exam}

\begin{exam}
	Let us take $\cv=\mathbf{DGAb}$, the monoidal category of differentially graded abelian groups, with the (regular epi, mono) factorization system. The unit $I$ of $\mathbf{DGAb}$ is the chain complex with $\mathbb Z$ in degree $0$ and trivial otherwise. Let $P$ be the chain complex with $\mathbb Z$ in degree $1$ and $0$, differential $1_\mathbb Z$ between those, and trivial everywhere else. Then we have an inclusion $i\colon I\to P$. 
	
	Consider now the sentence
	$$ (\forall x) (\exists y)\ (i(y)=x) $$
	in the empty language, where $x:I$ and $y:P$. It is easy to see that a chain complex $A$ satisfies the sentence above if and only if it forms a long exact sequence (that is, if $\tx{Im}(d_A^{n+1})=\tx{Ker}(d_A^n)$ for each $n$).
\end{exam}

\begin{exam}
	Given a small category $\cc$, consider the base of enrichment $\cv=[\cc^{\op},\Set]$ with its cartesian closed structure and the (epi, mono) factorization system.\\ 
	Recall that, if $J$ is a Grothendieck topology on $\cc$, then there is an induced notion of $J$-dense subobject $U\rightarrowtail V$ in $\cv$ (see~\cite[Section~V.1]{MLM}). Consider the regular theory $\TT$ given by the sentences
	$$ (\forall x)\ (\exists! y)\  (my=x),$$
	in the empty language over $\cv$, where $m\colon A\rightarrowtail \cc(-,C)$ is any $J$-dense map with representable codomain. Here the symbol $\exists!$ is interpreted as explained in Notation~\ref{unique}.\\
	Then, using \cite[Theorem~V.4.2]{MLM} and the fact that sheaves are stable under powers, it is easy to see that
	$$ \Mod(\TT)=\tx{Sh}(\cc,J)$$
	is the Grothendieck topos of sheaves over the site $(\cc,J)$.\\
	More generally, one could start with $\cv$ being any Grothendieck topos endowed with the cartesian closed structure and the (epi, mono) factorization system. Then, for any Lawvere-Tierney topology $j$ on $\cv$, we can express the full subcategory $\tx{Sh}_j\cv$ of $\cv$ spanned by the $j$-sheaves as the $\cv$-category of models of a regular theory. 
\end{exam}

\section*{Acknowledgements}

\subsection{Acknowledgements} We thank Joshua Wrigley for valuable feedback.

\subsection{Funding} Both authors acknowledge the support of the Grant Agency of the Czech Republic under the grant 22-02964S. The second author also acknowledges the support of the EPSRC postdoctoral fellowship EP/X027139/1.

\end{document}